\newtheorem{theorem}{Theorem}[section]
\newtheorem{corollary}{Corollary}
\newtheorem{lemma}[theorem]{Lemma}
\newtheorem{proposition}{Proposition}
\theoremstyle{definition}
\newtheorem{definition}[theorem]{Definition}
\newtheorem{remark}{Remark}
\newtheorem{cond}{Condition}
\newcommand{\Supp}{\operatorname{supp}} 
\newcommand{\supp}{\operatorname{supp}}
\tikzset{cross/.style={path picture={
  \draw[black]
    (path picture bounding box.south east)--(path picture bounding box.north west)
    (path picture bounding box.south west)--(path picture bounding box.north east);
}}}
\newcommand{\storto}[5]{
\pgfmoveto{\pgfxy(#1)}
\pgfcurveto{\pgfxy(#2)}{\pgfxy(#2)}{\pgfxy(#3)}
\pgfmoveto{\pgfxy(#3)}
\pgfcurveto{\pgfxy(#4)}{\pgfxy(#4)}{\pgfxy(#5)}
\pgfstroke}
\newcommand{\mb}[1]{\mathbb{#1}}
\newcommand{\mc}[1]{\mathcal{#1}}
\newcommand{\mr}[1]{\mathrm{#1}}
\title
      {Minimal time for the continuity equation controlled by a localized perturbation of the velocity vector field}
\author{
  Michel Duprez\thanks{CEREMADE,  Universit\'e Paris-Dauphine \& CNRS UMR 7534, Universit\'e PSL, 75016 Paris, France.  (\texttt{mduprez@math.cnrs.fr}).}
  \and
  Morgan Morancey\thanks{Aix Marseille Universit\'e, CNRS, Centrale Marseille, I2M, Marseille, France. (\texttt{morgan.morancey@univ-amu.fr}).}
  \and
  Francesco Rossi\thanks{Dipartimento di Matematica ``Tullio Levi-Civita", Universit\`{a} degli Studi di Padova, Via Trieste 63, 35121 Padova, Italy. (\texttt{francesco.rossi@math.unipd.it}). }
}
\begin{document}
\maketitle

%
%
%
%
%
%
%

%

\begin{abstract}
In this work, we study the  minimal time to steer a given crowd to a desired
configuration. The control is a vector field, 
representing a perturbation of the crowd velocity, 
localized on a fixed control set.
We will assume that there is no interaction between the agents.

We give a characterization of the minimal time both for microscopic and macroscopic descriptions of a crowd. 
We show that the minimal time to steer one initial configuration to another is related to the condition of having enough mass in the control region 
to feed the desired final configuration.

The construction of the control is explicit, providing
a numerical algorithm for computing it. We finally give some numerical simulations.
\end{abstract}

\section{Introduction and main results}

In recent years, the study of systems describing a crowd of interacting autonomous agents 
has drawn a great interest from the control community.
A better understanding of such interaction phenomena can have a strong impact in several key applications, such as road traffic and egress problems for pedestrians. For a few reviews about this topic, see \textit{e.g.} \cite{axelrod,active1,camazine,CPTbook,helbing,jackson,MT14,SepulchreReview}.

Beside the description of interactions, it is now relevant to study problems of control of crowds, \textit{i.e.} of controlling such systems by acting on few agents, or on a small subset of the configuration space. 
Roughly speaking, basic problems for such models are controllability (\textit{i.e.} reachability of a desired configuration) and optimal control (\textit{i.e.} the minimization of a given functional). We already addressed the controllability problem in \cite{DMR17}, thus identifying reachable configurations for crowd models. The present article deals with the subsequent step, that is the study of a classical optimal control problem: the minimal time to reach a desired configuration.

The nature of the control problem relies on the model used to describe the crowd. Two main classes are widely used. In  microscopic  models, the position of each agent is clearly identified; the crowd dynamics is described by an ordinary differential equation of large dimension, in which coupling terms represent interactions between agents. For control of such models, a large literature is available, see \textit{e.g.} reviews \cite{bullo,kumar,lin}, as well as applications, both to pedestrian crowds \cite{ferscha,luh} and to road traffic \cite{canudas,hegyi}.

In macroscopic models, instead, the idea is to represent the crowd by the spatial density of agents; in this setting, the evolution in time of the density solves a partial differential equation, usually of transport type. 
Nonlocal terms (such as convolutions) model  interactions between  agents. To our knowledge, there exist few studies of control of this family of equations.
In \cite{PRT15}, the authors provide approximate alignment of a crowd described by the macroscopic Cucker-Smale model \cite{CS07}. The control is the acceleration, and it is localized in a control region $\omega$ which moves in time. In a similar situation, a stabilization strategy has been established in  \cite{CPRT17,CPRT17b}, by generalizing the Jurdjevic-Quinn method to partial differential equations. A different approach is given by mean-field type control, \textit{i.e.} control of mean-field equations, and by mean-field games modelling crowds, see \textit{e.g.} \cite{achdou1,achdou2,benoit1,benoit2,carmona,FS}. In this case, problems are often of optimization nature, \textit{i.e.} the goal is to find a control minimizing a given cost, with no final constraint. A notable exception is the so-called planning problem, both in the pure transport setting \cite{OPS,tonon} and with an additional diffusion term \cite{achodu-camilli,porretta2,porretta1}. In this article, there is no individual optimization, thus these results do not apply to our setting.

This article deals with the problem of steering one initial configuration to a final one in minimal time. As stated above, we recently discussed in \cite{DMR17} the problem of controllability for the systems described here, which main results are recalled in Section \ref{section 2.3}.
 We proved that one can approximately steer an initial to a final configuration if they satisfy the Geometric Condition \ref{cond1} recalled below. Roughly speaking, it requires that the whole initial configuration crosses the control set $\omega$ forward in time, and the final one crosses it backward in time. From now on, we will always assume that this condition is satisfied, so to ensure that the final configuration is approximately reachable.

When the controllability property is satisfied, it is then interesting to study minimal time problems. Indeed, from the theoretical point of view, it is the first problem in which optimality conditions can be naturally defined. More related to applications described above, minimal time problems play a crucial role: egress problems can be described in this setting, while traffic control is often described in terms of minimization of (maximal or average) total travel time.

 Notice that the minimal time issue in a control problem can be a consequence of various unrelated phenomena. For instance, imposing constraints on the control or on the state can lead to a positive minimal time even if the considered evolution equation enjoys an infinite propagation speed (see for instance \cite{loheac}). On the contrary, considering evolution equations with a finite speed of propagation, such as transport or wave equations, naturally imply a minimal null control time when the control is localized.
Our contribution fits in the second setting.

For microscopic models, the dynamics can be written in terms of finite-dimen\-sional control systems. For this reason, minimal time problems can sometimes be addressed with classical (linear or non-linear) control theory, see \textit{e.g.} \cite{agrabook,BP07,jurdjevic,sontag}. Instead, very few results are known for macroscopic models, that can be recasted in terms of control of the transport equation. The linear case is classical, see \textit{e.g.} \cite{C09}. Instead, more recent developments in the non-linear case (based on generalization of differential inclusions) have been recently described in \cite{cavagnari1,cavagnari3,cavagnari2}.

The originality of our research lies in the constraint given on the control: it is a perturbation of the velocity vector field localized in a given region $\omega$ of the space. Such constraint is highly non-trivial, since the control problem is clearly non-linear even though the uncontrolled dynamics is. To the best of our knowledge, minimal time problems with this constraints have not been studied, neither for microscopic nor for macroscopic models.
We first study  a microscopic  model,
where the crowd is described by a vector with $nd$ components ($n,d\in\mb{N}^*$) representing the positions of $n$ agents in $\mb{R}^d$. The natural (uncontrolled) vector field is denoted by  
$v: \mb{R}^d\rightarrow\mb{R}^d$, assumed Lipschitz and uniformly bounded.
We act on the vector field in a fixed subdomain  $\omega$ of the space, 
which will be a  bounded open connected subset of $\mb{R}^d$. 
The admissible controls are thus functions of the form $\mathds{1}_{\omega}u:\mb{R}^d\times\mb{R}^+\rightarrow\mb{R}^d$.
The control models an external action (\textit{e.g.} dynamic signs for road traffic, or traffic agents in pedestrian crowds) located in a fixed area, that interacts with all agents in such given area. Since $\omega$ is fixed, it can act on predefined locations only.

We then consider the following ordinary differential equation
(microscopic model)
\begin{equation}\label{eq ODE}
\left\{\begin{array}{l}
\dot x_i(t) =v(x_i(t)) + \mathds{1}_{\omega}(x_i(t)) u(x_i(t),t),
\\\noalign{\smallskip}
x_i(0)=x^0_i
\end{array} ~~~i=1,\ldots,n,\right.
\end{equation}
where $X^0:=\{x^0_1,...,x^0_n\}$ is the initial configuration of the crowd. 

We also study a macroscopic model,
where the crowd is represented by its density, that is a time-evolving probability measure $\mu(t)$ defined on the space $\mb{R}^d$.
We consider the same  natural vector field $v$,
control region $\omega$ and admissible controls $\mathds{1}_{\omega}u$.
The dynamics is given by the following  linear transport equation 
(macroscopic model)
\begin{equation}\label{eq:transport}
	\left\{
	\begin{array}{ll}
\partial_t\mu +\nabla\cdot((v+\mathds{1}_{\omega}u)\mu)=0&\mbox{ in }\mb{R}^d\times\mb{R}^+,\\\noalign{\smallskip}
\mu(\cdot,0)=\mu^0&\mbox{ in }\mb{R}^d,\\
	\end{array}
	\right.
\end{equation}
where $\mu^0$ is the initial density of the crowd. The function $v+\mathds{1}_{\omega}u$ represents the vector field acting on $\mu$.

To a microscopic 
configuration $X:=\{x_1,...,x_n\}$, we can associate the empirical  measure 
\begin{equation}\label{eq:emp}
\mu:=\sum_{i=1}^n\frac{1}{n}\delta_{x_i}.
\end{equation}
With this notation, System \eqref{eq ODE} is a particular case of System \eqref{eq:transport}. 
This identification can be applied only if the different microscopic agents are considered identical or interchangeable, as it is often the case for crowd models with a large number of agents. This identification will be used several times in the following, namely to approximate macroscopic models with microscopic ones.

Systems \eqref{eq ODE} and \eqref{eq:transport} are first approximations for crowd modeling, since the uncontrolled vector field $v$ is given, and it does not describe interactions between agents. Nevertheless, it is necessary to understand control properties for such simple equations
as a first step, before dealing with vector fields depending on the crowd itself. 
Thus, in a future work, using the results for linear equations presented here, 
we will study control problems for crowd models with a non-local term $v[x_1,...,x_n]$ in the microscopic model \eqref{eq ODE} and  $v[\mu]$ in the the macroscopic model \eqref{eq:transport}.

From now on, we always assume that the following geometric condition is satisfied:

\begin{cond}[Geometric condition]\label{cond1}
Let  $\mu^0,\mu^1$ be two probability measur\-es on $\mb{R}^d$ satisfying:
\begin{enumerate}
\item[(i)] For each  $x^0\in \supp(\mu^0)$, 
there exists $t^0>0$ such that $\Phi_{t^0}^v(x^0)\in \omega,$
where $\Phi_{t}^v$ is the \textit{flow} associated to $v$
(see Definition \ref{def:flow} below).

\item[(ii)] For each $x^1\in \supp(\mu^1)$,  
there exists $t^1>0$ such that $\Phi_{-t^1}^{v}(x^1)\in \omega$.
\end{enumerate}
\end{cond}

The Geometric Condition \ref{cond1} means that each particle crosses the control region.
It is illustrated in Figure \ref{fig:cond geo}.
It is the minimal condition that we can expect to steer any initial condition to 
any target.
Indeed, if Item (i) of the Geometric Condition \ref{cond1} is not satisfied, then 
there exists a whole subpopulation of the measure $\mu^0$ 
 that never intersects the control region,
thus the localized control cannot act on it. 
\begin{figure}[htb]
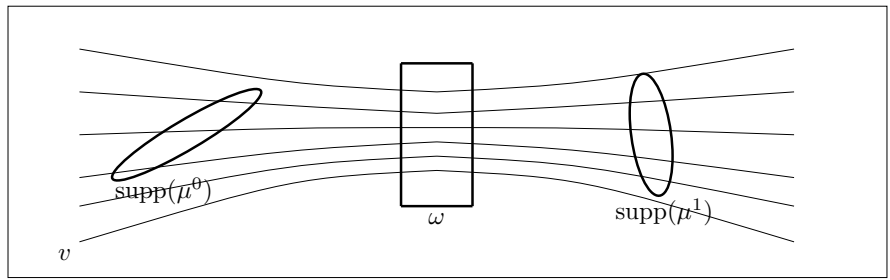

\begin{center}
\scalebox{0.95}{
\begin{pgfpictureboxed}{0cm}{0cm}{12.3 cm}{3.8cm}
\begin{pgfscope}
\pgfsetstartarrow{\pgfarrowswap{\pgfarrowto}}
\pgfsetendarrow{\pgfarrowto}
\pgfsetlinewidth{.2pt}
\storto{1,0.5}{4,1.4}{6,2-0.5}{8,1.9-0.5}{11,1-0.5}
\storto{1,1.5-0.5}{4,2.1-0.5}{6,2.2-0.5}{8,2.1-0.5}{11,1.5-0.5}
\storto{1,1.9-0.5}{4,2.3-0.5}{6,2.4-0.5}{8,2.3-0.5}{11,1.9-0.5}
\storto{1,2.5-0.5}{4,2.6-0.5}{6,2.6-0.5}{8,2.6-0.5}{11,2.5-0.5}
\storto{1,3.1-0.5}{4,2.9-0.5}{6,2.8-0.5}{8,2.9-0.5}{11,3.1-0.5}
\storto{1,3.7-0.5}{4,3.2-0.5}{6,3.1-0.5}{8,3.2-0.5}{11,3.7-0.5}
\end{pgfscope}
\begin{pgfscope}
\pgfsetlinewidth{1pt}
\pgfline{\pgfxy(5.5,1)}{\pgfxy(5.5,3)}
\pgfline{\pgfxy(6.5,1)}{\pgfxy(6.5,3)}
\pgfline{\pgfxy(5.5,1)}{\pgfxy(6.5,1)}
\pgfline{\pgfxy(5.5,3)}{\pgfxy(6.5,3)}
\pgfellipse[stroke]{\pgfxy(2.5,2)}{\pgfxy(0.3,0.4)}{\pgfxy(1,0.5)}
\pgfellipse[stroke]{\pgfxy(9,2)}{\pgfxy(0,.8)}{\pgfxy(-.3,0.3)}
\end{pgfscope}
\pgfputat{\pgfxy(1.5,1.4)}{\pgfbox[left,top]{$\supp(\mu^0)$}}
\pgfputat{\pgfxy(6,.9)}{\pgfbox[center,top]{$\omega$}}
\pgfputat{\pgfxy(8.5,1.1)}{\pgfbox[left,top]{$\supp(\mu^1)$}}
\pgfputat{\pgfxy(0.7,0.4)}{\pgfbox[left,top]{$v$}}
\end{pgfpictureboxed}}
\end{center}
\caption{Geometric condition.}\label{fig:cond geo}
\end{figure}

To ensure the well-posedness of Systems \eqref{eq ODE} and \eqref{eq:transport}, we fix regularity conditions on the natural vector field $v$ and the control  $\mathds{1}_{\omega}u$ as follows:
\begin{cond}[Carath\'eodory condition]\label{cond:cata}
Let $v$ and $\mathds{1}_{\omega}u$ be such that
\begin{itemize}
\item[(i)] The applications 
$x\mapsto v(x)$ and
$x\mapsto \mathds{1}_{\omega}u(x,t)$ for each $t\in\mathbb{R}$ are Lipschitz.
\item[(ii)] For all $x\in\mathbb{R}^d$, the application $t\mapsto \mathds{1}_{\omega}u(x,t)$ is measurable.
\item[(iii)] There exists $M>0$ such that $\|v\|_{\infty}\leqslant M$ and $\|\mathds{1}_{\omega}u\|_{\infty}\leqslant M$.
\end{itemize}
\end{cond}

For arbitrary macroscopic models, one can expect  approximate 
controllability only, since for general measures there exists no homeomorphism sending one to another.
 Indeed, if we impose the  Carath\'eodory condition,  then the flow $\Phi^{v+\mathds{1}_{\omega}u}_t$ is an homeomorphism (see \cite[Th. 2.1.1]{BP07}). For more details, see Remark \ref{rmq:exact}.

Similarly, in the microscopic case, the control vector field $\mathds{1}_{\omega}u$ cannot separate points, due to uniqueness of the solution of System \eqref{eq ODE}.  In the microscopic model, we then assume that the initial configuration $X^0$ 
and the final one $X^1$ are 
disjoint, in the following sense:
\begin{definition} \label{def:disjoint}
A configuration $X=\{x_1,...,x_n\}$ is disjoint if $x_i\neq x_j$ for all $i\neq j$.
\end{definition}
In other words, in a disjoint configuration, two agents cannot lie in the same point of the space. 
Since we deal with velocities $v+\mathds{1}_{\omega}u$ satisfying the Carath\'eo\-dory condition, if $X^0$ is a disjoint configuration,
then the solution $X(t)$ to microscopic System \eqref{eq ODE} is also a disjoint configuration at each time $t\geqslant 0$.

In this article, we aim to study the minimal time problem. We denote by  $T_a$ the minimal time 
to approximately steer   the initial configuration 
 $\mu^0$ to a final one $\mu^1$ in the following sense:
it is the infimum of times for which there exists a control $\mathds{1}_{\omega}u:\mb{R}^d\times\mb{R}^+\rightarrow\mb{R}^d$ satisfying the Carath\'eodory condition steering $\mu^0$ arbitrarily close to $\mu^1$.
We similarly define  the minimal time $T_e$  to exactly steer   the initial configuration  $\mu^0$ to a final one $\mu^1$.
A precise definition will be given below. Since the minimal time is not always reached, we will speak about \textit{infimum time}.

In the sequel, we will use the following notation for all $x\in\mb{R}^d$:
\begin{equation}\begin{array}{lll}
\overline{t}^0(x):=\inf\{t\in\mb{R}^+:\Phi_t^v(x)\in\overline{\omega}\},&\quad&
\overline{t}^1(x):=\inf\{t\in\mb{R}^+:\Phi_{-t}^v(x)\in\overline{\omega}\},\\\noalign{\smallskip}
t^0(x):=\inf\{t\in\mb{R}^+:\Phi_t^v(x)\in\omega\},&~&
t^1(x):=\inf\{t\in\mb{R}^+:\Phi_{-t}^v(x)\in\omega\}.
\end{array}\label{e-temps}
\end{equation}
The quantity $\overline{t}^0(x)$ is the infimum time at which the particle localized at point $x$ with the velocity $v$ belongs to $\overline{\omega}$.  Idem for the other quantities.
Under the Geometric Condition \ref{cond1}, $\overline{t}^0(x)$ and $t^0(x)$ are finite for all $x\in\supp(\mu^0)$, and similarly for $\overline{t}^1(x)$ and $t^1(x)$ when $x\in\supp(\mu^1)$. 
Moreover, it is clear that it always holds $\overline{t}^j(x)\leqslant t^j(x)$.
In some situations, this inequality can be strict. For example, in Figure \ref{fig:ex CE CA},
it holds $\overline{t}^1(x^1_1)<t^1(x^1_1)$. 
It is also important to remark that these quantities are not continuous with respect to $x$.
Indeed, for instance, in Figure \ref{fig:ex CE CA}, if we shift $x^1_1$ to the right (resp. to the left),
we observe a discontinuity of $\overline{t}^1$ (resp. $t^1$) at the point $x^1_1$.

If the set $\omega$ admits everywhere an outer normal vector $n$, \textit{e.g.} 
when it is $\mathcal{C}^1$, one can have $\overline{t}^1(x^1_1)<t^1(x^1_1)$ only if %
$v(\Phi^v_{-\overline{t}^1(x^1_1)}(x^1_1))\cdot n=0.$
This means that the trajectory $\Phi^v_{-t}(x^1_1)$ touches the boundary of $\omega$ with a parallel tangent vector. The proof of such simple statement can be recovered by applying the implicit function theorem.
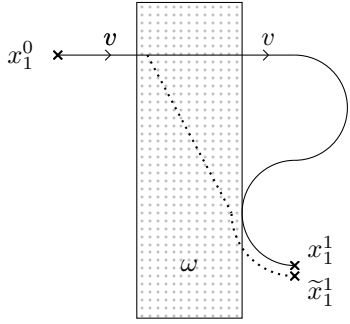
\begin{figure}[ht]
\begin{center}
\begin{tikzpicture}[scale=0.7]
\fill[pattern=dots,opacity = 0.5] (-1,-3) -- (1,-3) -- (1,3) -- (-1,3) -- cycle;
\draw (-1,-3) -- (1,-3) -- (1,3) -- (-1,3) -- cycle;
\node[cross,thick,scale=0.5] at (-2.5,2) {};
\node[cross,thick,scale=0.5] at (2,-2) {};
\path (-3.2,2) node {$x^0_1$};
\path (2.5,-1.7) node {$x^1_1$};
\path (0,-2) node {$\omega$};
\path (-1.5,2.3) node {$v$};
\draw (-2.5,2) -- (-1,2);
\path (1.5,2.3) node {$v$};
\draw (2,2) -- (-1,2);
\draw  (2,0) arc (-90:90:1);
\draw  (2,0) arc (-90:90:-1);
\path (-1.5,2.3) node {$v$};
\draw (-1.5,2) -- (-1.6,2.1);
\draw (-1.5,2) -- (-1.6,1.9);
\draw (1.5,2) -- (1.4,2.1);
\draw (1.5,2) -- (1.4,1.9);
\draw[dotted,thick]  (2,-2.2) arc (90:0:-1.2);
\draw[dotted,thick] (-0.8,2) -- (0.8,-1);
\node[cross,thick,scale=0.5] at (2,-2.2) {};
\path (2.5,-2.5) node {$\widetilde{x}^1_1$};
\end{tikzpicture}\end{center}
\caption{Example of difference between $t^1(x^1_1)$ and $\overline{t}^1(x^1_1)$. }\label{fig:ex CE CA}
\end{figure}
\subsection{Infimum time for microscopic models}
In this section, we state the two main results about the infimum time for microscopic models. We evaluate the minimal time both for exact and approximate controllability, highlighting the different role of $t^1(x)$ and $\overline{t}^1(x)$ in the two cases.

For simplicity, we denote by
 \begin{equation}\label{def:t^l_i}
 \overline{t}^0_i:=\overline{t}^0(x_i^0),\qquad \overline{t}^1_i:=\overline{t}^1(x_i^1),\qquad
 t^0_i:=t^0(x_i^0),\qquad t^1_i:=t^1(x_i^1),
 \end{equation}
 for $i\in\{1,...,n\}$. We then define 
$$
M^*_e(X^0,X^1):=\max\{t_i^0,t_i^1:i=1,...,n\}, \qquad
M^*_a(X^0,X^1):=\max\{t_i^0,\overline{t}_i^1:i=1,...,n\}.
$$
The value $M^*_e(X^0,X^1)$ has the following meaning for exact controllability: a time $T>M^*_e(X^0,X^1)$ is sufficiently large for each particle $x_i^0$ to enter $\omega$ and for each particle $x_i^1$ to enter it backward in time. The value $M^*_a(X^0,X^1)$ plays the same role for approximate controllability.

We now state our main result on the infimum time about exact control of System \eqref{eq ODE}.
\begin{theorem}[Infimum time for exact control of microscopic models]\label{th:discret exact}
Let $X^0:=\{x^0_1,...,x^0_n\}$ and $X^1:=\{x^1_1,...,x^1_n\}$ be two disjoint configurations (see Definition \ref{def:disjoint}).
Assume that $\omega$ is a bounded open connected set, the  empirical measures \eqref{eq:emp} associated to $X^0$ and $X^1$ 
 satisfy the Geometric Condition \ref{cond1} and 
  the velocities $v$ and $\mathds{1}_{\omega}u
 $ satisfy the Carath\'eodory Condition \ref{cond:cata}.
 
Arrange the sequences 
$\{t^0_i\}_i$ and $\{t^1_j\}_j$ to be non-decreasingly and non-increasingly ordered, respectively.
Then 
\begin{equation}\label{OT disc CE}
M_e(X^0,X^1):=\max_{i\in\{1,...,n\}}|t^0_i+t^1_i|
\end{equation}
is the infimum time $T_e(X^0,X^1)$ for  exact control of System \eqref{eq ODE} in the following sense:
\begin{itemize}
\item[(i)] For each $T>M_e(X^0,X^1)$, System \eqref{eq ODE} is exactly controllable  from $X^0$ to $X^1$ at time $T$, \textit{i.e.}
there exists a control  $\mathds{1}_{\omega}u:\mb{R}^d\times\mb{R}^+\rightarrow\mb{R}^d$ 
 steering $X^0$ exactly to $X^1$.
\item[(ii)] For each $T\in (M^*_e(X^0,X^1),M_e(X^0,X^1)]$,  System \eqref{eq ODE} is not exactly controllable  from $X^0$ to $X^1$.
\item[(iii)] There exists at most a finite number of times $T\in[0,M^*_e(X^0,X^1)]$ for 
which System \eqref{eq ODE} is exactly controllable  from $X^0$ to $X^1$.
\end{itemize}
\end{theorem}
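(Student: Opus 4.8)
The plan is to reduce this problem to a finite combinatorial assignment, using the one structural fact that makes the localized control rigid: since $\mathds{1}_{\omega}u$ vanishes outside $\omega$, along any solution of \eqref{eq ODE} the $i$-th trajectory $y_i$ coincides with the free flow $t\mapsto\Phi_t^v(x^0_i)$ for as long as it stays outside $\omega$, and moreover $\dot y_i=v(y_i)$ at every instant at which $y_i\in\partial\omega$. Hence $y_i$ cannot be in the open set $\omega$ before time $t^0_i$; applying the same remark to the time-reversed equation near $t=T$, a trajectory ending at $x^1_j$ at time $T$ cannot be in $\omega$ after time $T-t^1_j$. Since disjointness is preserved by \eqref{eq ODE}, exact controllability at time $T$ is equivalent to the existence of a bijection $\sigma$ with $y_i(T)=x^1_{\sigma(i)}$ together with an admissible control realizing it.

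\textbf{Necessity — items (ii) and (iii).} Given such a control and $\sigma$, set $A_i:=\{t\in[0,T]:y_i(t)\in\omega\}$, which is open. By the structural remark, $\inf A_i\geqslant t^0_i$ and $\sup A_i\leqslant T-t^1_{\sigma(i)}$, so $A_i\neq\varnothing$ forces $t^0_i+t^1_{\sigma(i)}<T$, whereas $A_i=\varnothing$ forces $y_i\equiv\Phi_\cdot^v(x^0_i)$, hence $x^1_{\sigma(i)}=\Phi_T^v(x^0_i)$ and $t^0_i\geqslant T$. A classical rearrangement argument gives $\min_\sigma\max_i(t^0_i+t^1_{\sigma(i)})=M_e(X^0,X^1)$, the minimum being attained by matching the $k$-th smallest $t^0$ with the $k$-th largest $t^1$ — which is exactly the ordering fixed in the statement. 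Therefore, if $T\leqslant M_e$ then for the $\sigma$ used there is an index $i_0$ with $t^0_{i_0}+t^1_{\sigma(i_0)}\geqslant T$, forcing $A_{i_0}=\varnothing$ and $t^0_{i_0}\geqslant T$; if moreover $T>M^*_e$ this contradicts $t^0_{i_0}\leqslant M^*_e<T$, which proves (ii). If instead $T\leqslant M^*_e$, then such an $i_0$ satisfies $T\in\{s\geqslant 0:\Phi_s^v(x^0_{i_0})=x^1_{\sigma(i_0)}\}$; since $A_{i_0}=\varnothing$ together with the Geometric Condition \ref{cond1} forbids $x^0_{i_0}$ from being an equilibrium of $v$, its orbit is either injective or periodic with positive minimal period, so this set meets the bounded interval $[0,M^*_e]$ in finitely many points; ranging over the finitely many bijections and indices yields (iii).

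\textbf{Sufficiency — item (i).} For $T>M_e$ I would take $\sigma$ to be the matching above (relabeling $X^1$ so that $x^0_i$ is to be steered to $x^1_i$), so that $t^0_i+t^1_i\leqslant M_e<T$ for every $i$. Put $p_i:=\Phi_{t^0_i}^v(x^0_i)$ and $q_i:=\Phi_{-t^1_i}^v(x^1_i)$; these belong to $\overline{\omega}$, with $v$ directed into $\omega$ at $p_i$ and out of $\omega$ at $q_i$, and the space--time entry points $(p_i,t^0_i)$ are pairwise distinct (otherwise $x^0_i=x^0_j$), as are the exit points $(q_i,T-t^1_i)$. The control is then built so that, for each $i$: $u\equiv 0$ on $[0,t^0_i]\cup[T-t^1_i,T]$, along which $y_i$ follows the free flow from $x^0_i$ to $p_i$ and then from $q_i$ to $x^1_i$; and on the nondegenerate window $[t^0_i,T-t^1_i]$ the trajectory is guided inside $\omega$ from $p_i$ to $q_i$. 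The real work is to realize all $n$ interior transits by a single field $\mathds{1}_{\omega}u$ obeying the Carath\'eodory Condition \ref{cond:cata}: one selects pairwise disjoint space--time curves in $\omega$ joining the entry data to the exit data — possible because $\omega$ is open and connected — reparametrized so that speeds stay below $M$ (the slack $T-M_e>0$ leaves room to slow down, and one may always ``park'' a particle using the admissible choice $u=-v$, licit since $\|v\|_\infty\leqslant M$), and then defines $u$ by a Lipschitz tubular-neighbourhood / partition-of-unity patch around these curves that fades to $0$ near $\partial\omega$; this is essentially the controllability construction of \cite{DMR17}, reused here with sharper timing bookkeeping.

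\textbf{Expected main obstacle.} The delicate step is precisely this interior construction: producing one globally Lipschitz, bounded, measurable-in-time field whose flow carries all $n$ particles along prescribed disjoint transits without collisions, with the control correctly vanishing on $\partial\omega$ and at the entry and exit instants (where the sign of $v$ at $p_i$ and $q_i$ is what guarantees the particle actually enters, respectively leaves, $\omega$). For $d\geqslant 2$ the disjointness of the transit curves is easy to arrange; for $d=1$, where flows preserve order, one must check that the optimal matching is compatible with the forced spatial ordering of particles inside $\omega$, which it is, since it is precisely the ``first in, first out'' matching (the particle entering $\omega$ earliest is sent to the target leaving it latest). A secondary technical point, which explains why $M_e$ involves $t^1_i$ rather than $\overline{t}^1_i$ and why (iii) cannot be strengthened, is that a trajectory may deviate from the free flow only once it is in the \emph{open} set $\omega$, so every entry and exit instant is pinned to the quantities $t^j$, not $\overline{t}^j$.
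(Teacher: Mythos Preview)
Your treatment of items (ii) and (iii) is essentially the paper's argument. Two small remarks: the strict inequality $t^0_i+t^1_{\sigma(i)}<T$ when $A_i\neq\varnothing$ uses that $A_i$ is relatively open in $[0,T]$ and hence contains a nondegenerate interval (so $\inf A_i<\sup A_i$); and for (iii) the paper streamlines things by fixing once and for all the index $m$ realizing $M_e^*=t^0_m$ (or $t^1_m$), whose trajectory is then free-flow for \emph{every} $T\leqslant M_e^*$, rather than letting $i_0$ vary with $T$ --- both versions are correct.

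The real divergence is in item (i). You commit from the outset to the monotone matching and are then left with exactly the obstacle you flag: producing pairwise disjoint space--time transits inside $\omega$ for that particular pairing. Your justification (``genericity for $d\geqslant 2$, order-compatibility for $d=1$'') is plausible but loose. The paper resolves this by a different and rather elegant device: it first picks $s_i^l\in(t_i^l,t_i^l+\delta/3)$ with $y_i^l:=\Phi^v_{\pm s_i^l}(x_i^l)\in\omega$ (strictly interior, which also avoids the boundary issue you raise about $p_i,q_i\in\overline\omega$), and then minimizes the linear functional $\pi\mapsto\sum_{ij}K_{ij}\pi_{ij}$ over bistochastic matrices, where $K_{ij}=\|(y_i^0,s_i^0)-(y_j^1,T-s_j^1)\|_{\mathbb R^{d+1}}$ if $s_i^0<T-s_j^1$ and $+\infty$ otherwise. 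By Krein--Milman the minimum is attained at a permutation, and for that permutation the straight space--time segments are automatically non-crossing (if two crossed, the swap would strictly decrease the cost --- the standard Monge argument). Disjoint trajectories thus come for free, uniformly in the dimension; the control is then a Lipschitz bump around these segments. The identity $\min_\sigma\max_i(t^0_i+t^1_{\sigma(i)})=M_e$ that you call ``classical'' is proved separately in the paper by a short inductive swap. So the paper's permutation used for the \emph{construction} need not be the monotone one; the monotone matching enters only to compute the value.

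One minor correction: the Carath\'eodory condition~\ref{cond:cata} does not fix $M$ in advance --- it only requires the control to be bounded by \emph{some} $M$. Hence your discussion of reparametrizing to keep speeds below $M$, and of parking via $u=-v$, is unnecessary: a transit speed of order $\mathrm{diam}(\omega)/(T-M_e)$ is admissible however large.
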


We give a proof of Theorem \ref{th:discret exact} in Section \ref{sec:opt time finite dim}.
As stated in item (iii), it can exist some times  $T\leqslant M^*_e(X^0,X^1)$ 
at which it is possible to steer $X^0$ to $X^1$, but 
it will be not entirely thanks to the control. 
We give an example of this situation in Remark \ref{rmq:T2*} below.
We now give a characterization of the minimal time for approximate control of microscopic models.
\begin{theorem}[Infimum time for approximate control of microscopic models]\label{th:discret approx}
Let $X^0:=\{x^0_1,...,x^0_n\}$ and $X^1:=\{x^1_1,...,x^1_n\}$ be two  disjoint configurations (see Definition \ref{def:disjoint}).
Assume that $\omega$ is a bounded open connected set, the  empirical measures  \eqref{eq:emp} associated to $X^0$ and $X^1$ 
 satisfy the Geometric Condition \ref{cond1} and 
 the velocities $v$ and $\mathds{1}_{\omega}u
 $ satisfy the Carath\'eodory Condition \ref{cond:cata}.
Arrange the sequences 
$\{t^0_i\}_i$ and $\{\overline{t}^1_j\}_j$ to
 be non-decreasingly and non-increasingly ordered, respectively.
Then  \begin{equation*}M_a(X^0,X^1):=\max_{i\in\{1,...,n\}}|t^0_i+\overline{t}^1_i|
\end{equation*}
is the infimum time $T_a(X^0,X^1)$ for approximate  control of System \eqref{eq ODE} in the following sense:
\begin{itemize}
\item[(i)] For each $T>M_a(X^0,X^1)$, System \eqref{eq ODE} is approximately controllable  from $X^0$ to $X^1$ at time $T$, \textit{i.e.}
there exists a permutation 
 $\sigma\in \mathbb{S}_n$ and a sequence of controls $\{\mathds{1}_{\omega}u_k\}_{k\in\mb{N}^*}$ 
 such that the associated solution $x_{i,k}(t)$ to System \eqref{eq ODE} satisfies 
 $x_{i,k}(T)\xrightarrow[k\to\infty]{} x^1_{\sigma(i)}.$
 
 
 \item[(ii)] For each $T\in (M^*_a(X^0,X^1),M_a(X^0,X^1)]$,  System \eqref{eq ODE} is not approximately controllable  from $X^0$ to $X^1$.
\item[(iii)] There exists at most a finite number of times $T\in[0,M^*_a(X^0,X^1)]$ for which System 
\eqref{eq ODE} is approximately controllable  from $X^0$ to $X^1$.
\end{itemize}

\end{theorem}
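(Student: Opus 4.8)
The plan is to derive the three items from Theorem~\ref{th:discret exact}, a combinatorial rearrangement identity, and a single compactness argument for the obstruction.

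\textbf{Step 1 (combinatorial normal form).} First I would record that for reals $a_1\le\cdots\le a_n$ and $b_1\ge\cdots\ge b_n$ a two-element (bubble-sort) exchange shows the identity pairing minimizes $\sigma\mapsto\max_i(a_i+b_{\sigma(i)})$ over $\mathbb{S}_n$; applied with $a_i=t^0_i$ and $b_j=\overline{t}^1_j$ this gives
\[
M_a(X^0,X^1)=\min_{\sigma\in\mathbb{S}_n}\ \max_{i}\big(t^0_i+\overline{t}^1_{\sigma(i)}\big),
\]
and likewise $M_e(X^0,X^1)=\min_\sigma\max_i(t^0_i+t^1_{\sigma(i)})$. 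In particular the value is monotone: decreasing each $\overline{t}^1_i$ cannot increase it.

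\textbf{Step 2 (item (i), via target perturbation).} Given $T>M_a$, I would approximate $X^1$ by a disjoint configuration whose \emph{open} hitting times do not exceed the $\overline{t}^1_i$. Fix $i$; if $y_i:=\Phi^v_{-\overline{t}^1_i}(x^1_i)\in\omega$, keep $x^1_i$ (then $t^1(x^1_i)=\overline{t}^1_i$); otherwise $y_i\in\partial\omega$ and, $\omega$ being open, pick $z_i\in\omega$ arbitrarily close to $y_i$ and set $\tilde x^1_i:=\Phi^v_{\overline{t}^1_i}(z_i)$, so that $\Phi^v_{-\overline{t}^1_i}(\tilde x^1_i)=z_i\in\omega$, hence $t^1(\tilde x^1_i)\le\overline{t}^1_i$, while $|\tilde x^1_i-x^1_i|$ is small by continuous dependence of the flow on the initial datum; a further arbitrarily small move makes $\tilde X^1$ disjoint. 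Then $\tilde X^1$ still satisfies the Geometric Condition~\ref{cond1}, and by Step~1, $M_e(X^0,\tilde X^1)\le M_a(X^0,X^1)<T$, so Theorem~\ref{th:discret exact}(i) provides a control steering $X^0$ exactly onto $\tilde X^1$ at time $T$. Letting the perturbation parameter go to $0$ and extracting a subsequence on which the induced permutation is constant yields approximate controllability in the stated form.

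\textbf{Step 3 (items (ii) and (iii), the obstruction).} The key remark is that until a trajectory of \eqref{eq ODE} first meets $\omega$ it coincides with $t\mapsto\Phi^v_t(x^0_i)$, so $\inf\{t:x_{i,k}(t)\in\omega\}=t^0_i$ independently of the control; hence if $T>M^*_a$ every particle enters $\omega$ before time $T$. For any admissible control put $\tau^i_k:=\sup\{t\le T:x_{i,k}(t)\in\omega\}\ge t^0_i$ and $p^i_k:=x_{i,k}(\tau^i_k)\in\overline\omega$; since the velocity is $v$ after the last exit, $x_{i,k}(T)=\Phi^v_{T-\tau^i_k}(p^i_k)$. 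If \eqref{eq ODE} were approximately controllable at some $T\in(M^*_a,M_a]$ via $\sigma$, compactness of $\overline\omega$ and of $[0,T]$ lets me pass to a subsequence with $\tau^i_k\to\tau^i_*\in[t^0_i,T]$ and $p^i_k\to p^i_*\in\overline\omega$ for all $i$; then $\Phi^v_{-(T-\tau^i_*)}(x^1_{\sigma(i)})=p^i_*\in\overline\omega$, so $t^0_i+\overline{t}^1_{\sigma(i)}\le T$ for every $i$, which by Step~1 forces $T\ge M_a$, hence $T=M_a$. For an index with $t^0_i+\overline{t}^1_{\sigma(i)}=T$ (so $t^0_i>0$, as $T>M^*_a$) all these inequalities are equalities: $\tau^i_*=t^0_i$, whence $p^i_*=\Phi^v_{t^0_i}(x^0_i)$ (the controlled velocity being bounded by $2M$ on the shrinking interval $[t^0_i,\tau^i_k]$), so $x^1_{\sigma(i)}=\Phi^v_T(x^0_i)$ and $\overline{t}^1_{\sigma(i)}=T-t^0_i$; but the last equality says $\Phi^v_t(x^0_i)=\Phi^v_{-(T-t)}(x^1_{\sigma(i)})\notin\overline\omega$ for all $t\in(t^0_i,T]$, contradicting $t^0_i=\inf\{t:\Phi^v_t(x^0_i)\in\omega\}$ together with the openness of $\omega$; this proves (ii). For (iii), if $T\in[0,M^*_a]$ and \eqref{eq ODE} is approximately controllable via $\sigma$, the same analysis shows each $i$ satisfies either $t^0_i+\overline{t}^1_{\sigma(i)}\le T$ (when it enters $\omega$, i.e. $t^0_i<T$) or $\Phi^v_T(x^0_i)=x^1_{\sigma(i)}$ (when $t^0_i>T$); if $T<M^*_a$, the particle or target realizing $M^*_a$ must fall in the second case, so $T\in\bigcup_{i,j}\{t\ge0:\Phi^v_t(x^0_i)=x^1_j\}$, and for each pair this set is a single point unless $x^1_j$ lies on a periodic orbit of $v$ — in which case it sits in an arithmetic progression of fixed positive step, the fixed-point case reducing by backward uniqueness to $x^0_i=x^1_j$, which is removed beforehand — so its intersection with the bounded interval $[0,M^*_a]$, together with $M^*_a$ itself, is finite.

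\textbf{Expected main obstacle.} The delicate point is the strict inequality $T>M_a$ at the end of Step~3, i.e. that the infimum time is genuinely \emph{not attained} for approximate control: this is precisely where the openness of $\omega$ and trajectories grazing $\partial\omega$ must be exploited, and it is the mechanism producing $\overline{t}^1$ rather than $t^1$ in $M_a$. Setting up the last-exit decomposition cleanly and disposing of the degenerate coincidences in (iii) are the remaining points requiring care.
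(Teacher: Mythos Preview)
Your strategy mirrors the paper's closely. Item~(i) is essentially identical: perturb each target so that the backward trajectory lands in the open set $\omega$ rather than merely in $\overline\omega$, then invoke exact controllability (Theorem~\ref{th:discret exact}/Proposition~\ref{prop: dim finie}). For (ii), your last-exit-time decomposition is a pleasant variant of the paper's route (which instead applies Proposition~\ref{prop: dim finie} to the approximate endpoints $Y^1_k:=X_k(T)$ and passes to the limit in $t^1(y^1_{k,i})$); both arguments correctly yield $t^0_i+\overline t^1_{\sigma(i)}\le T$ for every $i$, hence $M_a\le T$, which disposes of all $T\in(M^*_a,M_a)$. Item~(iii) follows the same pattern as the paper.

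The genuine gap is in your attempt to exclude the endpoint $T=M_a$. The step ``$p^i_*=\Phi^v_{t^0_i}(x^0_i)$, the controlled velocity being bounded by $2M$ on $[t^0_i,\tau^i_k]$'' fails because the Carath\'eodory bound $M$ is attached to \emph{each} control $u_k$ separately and is not uniform in $k$; the paper explicitly relies on ``arbitrarily large speed in $\omega$'' in its constructions. A one-dimensional instance shows the failure concretely: with $\omega=(0,1)$, $v\equiv1$, $x^0=-1$, $x^1=2$ (so $t^0=\overline t^1=1$, $M_a=2$), Lipschitz controls of growing norm push the particle across $\omega$ in vanishing time, giving $\tau^i_k\to t^0_i=1$ while $p^i_k\equiv1\neq0=\Phi^v_{t^0_i}(x^0_i)$, and one reaches $2-o(1)$ at $T=2$. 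Your contradiction therefore does not go through. The paper's own proof is also opaque at exactly this point (the displayed choice $\delta:=(T-\widetilde M_e(X^0,X^1))/2$ need not be positive, and the chain of inequalities only delivers $\widetilde M_a\le T$), so this is a delicate issue with the endpoint in the statement rather than a defect peculiar to your approach. A smaller point in (iii): the fixed-point case is not ``removed beforehand'' --- nothing forbids $x^0_i=x^1_j$; the correct dismissal (as in the paper) is that along the uncontrolled trajectory through the index realizing $M^*_a$ the vector field cannot vanish, since otherwise that trajectory would never enter $\omega$, contradicting the Geometric Condition.
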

We give a proof of Theorem \ref{th:discret approx} in Section \ref{sec:opt time finite dim}.
As for  the exact controllability, one might have approximate controllability for a time $T\leqslant M_a^*(X^0,X^1)$, but not entirely  due to the control. We refer again to Remark \ref{rmq:T2*} below for an example.

It is well know that the notions of approximate and exact controllability 
are equivalent for finite dimensional linear systems when the control acts linearly, see \textit{e.g.} \cite{C09}. This is not the case for System \eqref{eq ODE}, which highlights the fact that we are dealing with
a non-linear control problem. The difference is indeed related to the fact that for exact and approximate controllability, tangent trajectories  give different behaviors.
 For example, in Figure \ref{fig:ex CE CA}, if we denote by $X^0:=\{x^0_1\}$ and $X^1:=\{x^1_1\}$, 
 then it holds   $M_a(X^0,X^1)< M_e(X^0,X^1)$ due to the presence of a tangent trajectory.
A trajectory achieving approximate controllability is represented as dashed lines in the case $T\in (  M_a(X^0,X^1), M_e(X^0,X^1))$
in Figure \ref{fig:ex CE CA}.%

\subsection{Infimum time for macroscopic models}

In this section, we state the main result about the infimum time for approximate control of macroscopic models. 

Let  $\mu^0$ and $\mu^1$ be two probability measures, with compact support.
We introduce the maps $\mc{F}_{\mu^0}$ and $\mc{B}_{\mu^1}$ defined for all $t\geqslant 0$ by
\begin{equation*}\left\{\begin{array}{l}
\mc{F}_{\mu^0}(t):=\mu^0(\{x\in \supp(\mu^0):t^0(x)\leqslant t\}),\\\noalign{\smallskip}
\mc{B}_{\mu^1}(t):=\mu^1(\{x\in \supp(\mu^1):t^1(x)\leqslant t\}).
\end{array}\right.
\end{equation*} 
The function $\mc{F}_{\mu^0}$ (resp. $\mc{B}_{\mu^1}$) gives the quantity of mass coming from $\mu^0$  forward in time
(resp.  the quantity of mass coming from $\mu^1$ backward in time) which has entered in $\omega$ at time $t$. Observe that we do not decrease $\mc{F}_{\mu^0}$ when the mass eventually leaves $\omega$, and similarly for $\mc{B}_{\mu^1}$. Define the generalised inverse functions $\mc{F}_{\mu^0}^{-1}$ and $\mc{B}_{\mu^1}^{-1}$ of  $\mc{F}_{\mu^0}$ and $\mc{B}_{\mu^1}$ given for all $m\in[0,1]$ by
\begin{equation*}\left\{\begin{array}{l}
\mc{F}_{\mu^0}^{-1}(m):=\inf\{t\geq0 : \mc{F}_{\mu^0}(t)\geqslant m\},\\\noalign{\smallskip}
\mc{B}_{\mu^1}^{-1}(m):=\inf\{t\geq0 : \mc{B}_{\mu^1}(t)\geqslant m\}.
\end{array}\right.
\end{equation*}
The function $\mc{F}_{\mu^0}^{-1}$
is increasing, lower semi-continuous and gives the time at which a mass $m$ has entered in $\omega$; similarly for $\mc{B}_{\mu^1}^{-1}$.
Define 
\begin{equation}
S^*(\mu^0,\mu^1):=\sup\{t^l(x):x\in\supp(\mu^l)\mbox{ and }l\in\{0,1\}\}.\label{e:Sstar}
\end{equation}
We then have the following main result about infimum time in the macroscopic case:
\begin{theorem}[Infimum time for approximate control of macroscopic models]\label{th opt}
Let  $\mu^0$ and $\mu^1$ be two probability measures, with compact support, absolutely continuous with respect to the Lebesgue measure and satisfying the Geometric Condition \ref{cond1}. Assume that  $\omega$ is a bounded open connected set and 
  the velocities $v$ and $\mathds{1}_{\omega}u
 $ satisfy the Carath\'eodory Condition \ref{cond:cata}.
Then 
\begin{equation}\label{def T0}
S(\mu^0,\mu^1):=\sup_{m\in[0,1]} \{\mc{F}_{\mu^0}^{-1}(m)+\mc{B}_{\mu^1}^{-1}(1-m)\}
\end{equation}
is the infimum time $T_a(\mu^0,\mu^1)$ to approximately steer $\mu^0$ to $\mu^1$  in the following sense:
\begin{enumerate}
\item[(i)] Assume that there exists $\alpha>0$ for which it holds
\begin{equation}\label{eq:cond omega}
\{x\in\mathbb{R}^d:d(x,\omega^c)>\varepsilon\} \mbox{ is connected for each }\varepsilon\in(0,\alpha),
\end{equation}
(e.g. when $\omega$ is convex.) Then for all $T> S(\mu^0,\mu^1)$,  System \eqref{eq:transport}
 is approximately controllable 
from $\mu^0$ to $\mu^1$ at time $T$.
\item[(ii)] Assume that the boundary of the evoluted set 
\begin{equation}
\omega^t:=\cup_{\tau\in(0,t)}\Phi^v_\tau(\omega) \label{e-evol1}
\end{equation}
with respect to the flow $\Phi^v_\tau$ of the vector field  $v$ has zero Lebesgue measure for each $t>0$. Then, for all $T\in(S^*(\mu^0,\mu^1),S(\mu^0,\mu^1)]$, System \eqref{eq:transport} is not approximately controllable
from $\mu^0$ to $\mu^1$.
\end{enumerate}
\end{theorem}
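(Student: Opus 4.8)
The plan is to reduce the macroscopic problem to the microscopic one already handled in Theorems~\ref{th:discret exact}--\ref{th:discret approx}, via approximation of absolutely continuous measures by empirical measures, and to identify the quantity $S(\mu^0,\mu^1)$ as the continuum limit of the microscopic optimal time $M_a(X^0,X^1)=\max_i|t^0_i+\overline t^1_i|$. The key observation is that, after arranging $\{t^0_i\}$ non-decreasingly and $\{\overline t^1_j\}$ non-increasingly, the microscopic time $M_a$ is exactly a discrete version of a sup over a matching between ``forward entrance times'' and ``backward entrance times'', and this matching sup, written in terms of the push-forward distribution of entrance times, is precisely $\sup_{m\in[0,1]}\{\mc F_{\mu^0}^{-1}(m)+\mc B_{\mu^1}^{-1}(1-m)\}$. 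Indeed $\mc F_{\mu^0}$ is the cumulative distribution of the random variable $t^0(\cdot)$ under $\mu^0$, so $\mc F_{\mu^0}^{-1}$ is its quantile function, and the optimal (monotone rearrangement) pairing of mass entering forward with mass entering backward yields this formula; this is the one-dimensional optimal transport / Hardy--Littlewood rearrangement heuristic made rigorous.

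For the positive direction (i), I would fix $T>S(\mu^0,\mu^1)$ and choose $\delta>0$ with $T>S(\mu^0,\mu^1)+3\delta$. First discretize: approximate $\mu^0$ and $\mu^1$ in Wasserstein distance by disjoint empirical measures $X^0_N,X^1_N$ on $N$ points each, chosen so that the empirical quantile functions of the entrance times $t^0(\cdot)$, $\overline t^1(\cdot)$ converge to $\mc F_{\mu^0}^{-1}$, $\mc B_{\mu^1}^{-1}$; one must be slightly careful here because $t^0,\overline t^1$ are only lower/upper semicontinuous, so I would approximate from the ``safe'' side (slightly inside $\omega$, using the connectedness hypothesis~\eqref{eq:cond omega} on the $\varepsilon$-shrunk sets to guarantee the relevant entrance times behave well and stay finite) so that the discrete optimal times satisfy $M_a(X^0_N,X^1_N)\le S(\mu^0,\mu^1)+\delta$ for $N$ large. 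Then apply Theorem~\ref{th:discret approx}(i) to steer $X^0_N$ to $X^1_N$ (up to a permutation) in time $T$ with a control $\mathds 1_\omega u_N$ satisfying the Carath\'eodory condition, uniformly bounded by the constant $M$. Finally, since the flow of a uniformly Lipschitz, uniformly bounded field depends continuously (in Wasserstein distance) on the initial measure, the same control $\mathds 1_\omega u_N$ applied to $\mu^0$ produces $\mu(T)$ close to $\mu^1$: combine the discretization error, the Theorem~\ref{th:discret approx} error, and the flow-continuity estimate, and let $N\to\infty$. The connectedness hypothesis~\eqref{eq:cond omega} is what lets us run the microscopic construction simultaneously for all $N$ points inside one fixed control region without the particles needing disjoint ``corridors.''

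For the negative direction (ii), fix $T\le S(\mu^0,\mu^1)$ with $T>S^*(\mu^0,\mu^1)$; then there is $m^*\in(0,1)$ with $\mc F_{\mu^0}^{-1}(m^*)+\mc B_{\mu^1}^{-1}(1-m^*)>T$. Set $a:=\mc F_{\mu^0}^{-1}(m^*)$ and $b:=\mc B_{\mu^1}^{-1}(1-m^*)$, so $a+b>T$, and consider the set $A$ of points of $\supp(\mu^0)$ that have \emph{not} entered $\omega$ strictly before time $a$ (mass $\ge 1-m^*$ of $\mu^0$ lies in... rather: mass $1-m^*$, i.e.\ $\mu^0(A)\ge 1-m^*+\eta$ for some $\eta>0$ by definition of the generalized inverse), and the set $B$ of points of $\supp(\mu^1)$ that have not exited $\omega$ backward strictly before time $b$, with $\mu^1(B)\ge m^*+\eta$. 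The core claim is a \emph{mass-conservation obstruction}: under any admissible control, all the $\mu^0$-mass that must be moved onto the $\mu^1(B)$-part of the target has to physically pass through $\omega$ within the time window, and the mass that can enter $\omega$ by time $T-b$ (coming from $\mu^0$) is at most $\mc F_{\mu^0}(T-b)$; since $T-b<a=\mc F_{\mu^0}^{-1}(m^*)$ we get $\mc F_{\mu^0}(T-b)<m^*$, while the mass the target needs fed through $\omega$ is $\ge m^*$, a contradiction with $\mu^0(\mathbb R^d)=\mu^1(\mathbb R^d)=1$. Making this precise is the main obstacle: one must show that a point whose $v$-trajectory has not yet met $\omega$ cannot have been displaced from its uncontrolled position (so it is not yet ``available'' to be redirected), which relies on the fact that outside $\omega$ the dynamics is exactly $\dot x = v(x)$, hence the controlled and uncontrolled flows agree until the first entrance time; symmetrically for the backward side at $\mu^1$. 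The hypothesis that $\partial\omega^t$ has zero Lebesgue measure is used to ensure the sets ``mass that has entered $\omega$ by time $s$'' are $\mu^0$-measurable and that $\mc F_{\mu^0}$ is continuous (no atoms of entrance time on $\partial\omega$, since $\mu^0\ll\mathcal L^d$), which legitimizes the strict inequalities above. A standard $\varepsilon$-approximation argument then upgrades this from ``exact'' to ``approximate'' non-controllability: if one could get $\varepsilon$-close for every $\varepsilon$, the uniform bound $M$ on velocities plus the above quantitative mass deficit $\eta>0$ gives a contradiction for $\varepsilon$ small.
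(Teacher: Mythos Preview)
Your overall plan---reduce to the discrete case and identify $S(\mu^0,\mu^1)$ as the continuum analogue of $M_a$---is the same strategy the paper uses, but in each direction there is a step that does not go through as written.

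\medskip

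\textbf{Item (i): the flow-continuity step fails.} You propose to take the control $\mathds{1}_\omega u_N$ produced by Theorem~\ref{th:discret approx} for the empirical approximation $X^0_N,X^1_N$, apply it to the continuous $\mu^0$, and invoke the estimate $W_p(\Phi^w_T\#\mu,\Phi^w_T\#\nu)\le e^{LT}W_p(\mu,\nu)$. The problem is that the Lipschitz constant $L=L_N$ of $\mathds{1}_\omega u_N$ is not bounded uniformly in $N$: the construction in Proposition~\ref{prop: dim finie} must separate $N$ non-crossing trajectories inside $\omega$, and as $N\to\infty$ these trajectories become arbitrarily close, forcing $L_N\to\infty$. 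So $e^{L_N T}W_p(\mu^0,\hat\mu^0_N)$ need not tend to zero. The paper avoids this by \emph{not} relying on a generic continuity estimate. Instead it decomposes $\mu^0$ into small cells $B^0_{nkij}$ of mass $1/n^4-1/n^6$, places one discrete point $x^0_{nkij}$ in each, builds the discrete control, and then explicitly designs a control $u_n$ (with large but fixed-per-$n$ Lipschitz constant $C_n$) that \emph{concentrates} each full cell around its discrete trajectory via a linear feedback $C_n(\hat x_{nkij}(t)-x)$. The diameters of the moving cells are tracked directly (see \eqref{e-diams}), their disjointness is checked case by case, and the final Wasserstein estimate is obtained cell-by-cell rather than through a global Gronwall factor. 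The ``up to small mass'' framework (Propositions~\ref{prop:up to mass discret} and~\ref{prop cont T0}) is needed precisely to handle the residual mass not captured by this cell decomposition.

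\medskip

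\textbf{Item (ii): the obstruction is right in spirit but the test set is missing.} Your ``mass that can enter $\omega$ by time $T-b$ is $<m^*$, while the target needs $\ge m^*$ fed through'' argument is heuristically correct but not rigorous as stated: you have not identified a concrete set on which $\mu(T)$ and $\mu^1$ provably differ, uniformly in the choice of control. The paper's device is the evoluted set $\omega^{T-\bar t}=\bigcup_{\tau\in(0,T-\bar t)}\Phi^v_\tau(\omega)$: one shows (Lemma~\ref{l-omegat}) that for \emph{any} admissible control the late mass $A=\{t^0(\cdot)>\bar t\}$ satisfies $\Phi^{v+\mathds{1}_\omega u}_T(A)\subset\omega^{T-\bar t}$, hence $\mu(T)(\omega^{T-\bar t})\ge 1-m$, while $\mu^1(\omega^{T-\bar t})<1-m$ since $T-\bar t<\mc{B}_{\mu^1}^{-1}(1-m)$. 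The role of the hypothesis $|\partial\omega^t|=0$ is not what you state (it is not used to make $\mc{F}_{\mu^0}$ continuous): it is used to pass from $\mu^1(\omega^{T-\bar t})<1-m$ to $\mu^1(\overline{\omega^{T-\bar t}})<1-m$, so that one can find an open $\mathcal O\supset\overline{\omega^{T-\bar t}}$ with $\mu^1(\mathcal O)<1-m$ and positive distance $D>0$ from $\omega^{T-\bar t}$ to $\mathcal O^c$. This yields the control-independent lower bound $W_1(\mu(T),\mu^1)\ge D(1-m-\mu^1(\mathcal O))>0$, which is what rules out approximate controllability.
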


We give a proof  of Theorem \ref{th opt} in  Section \ref{sec:optimal time cont}.
We observe that the quantity $S(\mu^0,\mu^1)$ given in \eqref{def T0} is the continuous equivalent of $M_e(X^0,X^1)$ in \eqref{OT disc CE}.
Contrarily to the microscopic case, System \eqref{eq:transport} 
can be approximately controllable
at each time $T\in(0,S^*(\mu^0,\mu^1))$.
We give some examples in Remark \ref{rmq:T2* cont} below.
We do not analyze the infimum time to exactly control System \eqref{eq:transport}
since it is not exactly controllable with controls satisfying the Carath\'eodory condition. 
For more details, we refer to Remark \ref{rmq:exact} below.

\begin{remark}
 The particular condition on $\omega^t$ in statement (ii) in Theorem \ref{th opt} is crucial to ensure that AC-measures do not concentrate on the boundary. In connection with this condition, we now state the following useful Lemma, which proof is postponed to the Appendix.

It gives simple conditions, though not optimal, ensuring that the assumptions of Theorem~\ref{th opt} (\textit{ii}) hold.
\end{remark}
\begin{lemma}\label{l-omegatreg}
Let $v$ be a $C^1$ vector field and $\omega$ an open bounded set satisfying the  uniform interior cone condition (e.g. convex) as recalled in Definition~\ref{d-intcone}. Then, for each $t>0$ the evoluted set $\omega^t$ defined in \eqref{e-evol1} has boundary with zero Lebesgue measure.
\end{lemma}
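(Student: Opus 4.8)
The plan is to show that $\partial \omega^t$ is covered by finitely many Lipschitz images of sets of zero Lebesgue measure, hence has zero measure. First I would note that $\omega^t = \bigcup_{\tau\in(0,t)}\Phi^v_\tau(\omega)$ can be rewritten using the open map $\Psi\colon (0,t)\times\omega \to \mathbb{R}^d$, $\Psi(\tau,x) = \Phi^v_\tau(x)$, so that $\omega^t = \Psi((0,t)\times\omega)$, and since $v$ is $C^1$ the flow map $\Phi^v$ is $C^1$ jointly in $(\tau,x)$ (standard ODE regularity), hence $\Psi$ is $C^1$ and in particular locally Lipschitz. The key observation is that any point of $\partial\omega^t$ must be the image under $\Psi$ of a point in the boundary of the parameter domain $\overline{(0,t)\times\omega} \setminus ((0,t)\times\omega)$, i.e. of $\{0,t\}\times\overline\omega$ together with $[0,t]\times\partial\omega$: indeed if $y = \Psi(\tau_0,x_0)$ with $(\tau_0,x_0)$ interior to $(0,t)\times\omega$, then since $\Psi$ is an open map (its differential $\partial_\tau\Psi = v(\Phi^v_{\tau}(x))$ together with $\partial_x\Psi = D\Phi^v_\tau(x)$, which is invertible, already makes $x\mapsto\Psi(\tau,x)$ a local diffeomorphism for fixed $\tau$), a whole neighbourhood of $y$ lies in $\omega^t$, so $y\notin\partial\omega^t$. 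Therefore
\[
\partial\omega^t \subseteq \Phi^v_0(\overline\omega)\cup\Phi^v_t(\overline\omega)\cup\bigcup_{\tau\in[0,t]}\Phi^v_\tau(\partial\omega) = \overline\omega \cup \Phi^v_t(\overline\omega)\cup \Psi([0,t]\times\partial\omega).
\]

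Next I would bound the measure of each piece. Both $\overline\omega$ and $\Phi^v_t(\overline\omega)$ are problematic as written (they have positive measure), so the argument must be sharpened: actually $y\in\partial\omega^t$ with $y = \Phi^v_0(x_0) = x_0 \in \omega$ is impossible since $\omega\subseteq\omega^t$ is open and $\omega^t$ would contain a neighbourhood of $y$; so we only need $x_0\in\partial\omega$, giving $y\in\partial\omega$. Similarly a point $y=\Phi^v_t(x_0)$ with $x_0\in\omega$ lies in the open set $\Phi^v_{t-\delta}(\Phi^v_\delta(\omega))\subseteq\omega^t$ for small $\delta>0$ (wait — this needs $\Phi^v_{\tau}(\Phi^v_\delta(x_0))\in\omega^t$ for $\tau$ near $t-\delta$; since $\Phi^v_\delta(x_0)$ need not be in $\omega$ this requires more care), so here I would instead argue directly: $\Phi^v_t(\omega)$ is open, and for a point $y\in\Phi^v_t(\omega)$ one has $y = \Phi^v_{t-\tau}(\Phi^v_\tau(x_0))$ for all $\tau$, in particular taking $\tau$ slightly less than $t$ shows $y\in\Phi^v_\tau(\Phi^v_{t-\tau}(\omega))$ and one checks $\Phi^v_{t-\tau}(\omega)$ meets... — the cleanest route is to use the interior cone condition here: the uniform interior cone condition on $\omega$ guarantees $\partial\omega$ has zero Lebesgue measure (it is locally a Lipschitz graph), and then since $\Phi^v_\tau$ is locally Lipschitz it maps the null set $\partial\omega$ to a null set, so $\Phi^v_t(\partial\omega)$ and each slice $\Phi^v_\tau(\partial\omega)$ are null.

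The remaining and genuinely delicate piece is $\Psi([0,t]\times\partial\omega)$, the union over $\tau$ of the null sets $\Phi^v_\tau(\partial\omega)$: an uncountable union of null sets need not be null. Here I would use that $\Psi$ restricted to $[0,t]\times\partial\omega$ is Lipschitz and that $\partial\omega$, under the uniform interior cone condition, is contained in a finite union of graphs of Lipschitz functions of $d-1$ variables, hence is the Lipschitz image of a set $A\subseteq\mathbb{R}^{d-1}$; consequently $[0,t]\times\partial\omega$ is the Lipschitz image of $[0,t]\times A \subseteq \mathbb{R}^d$, and the composition $\Psi\circ(\text{parametrization})$ is a Lipschitz map defined on a subset of $\mathbb{R}^d$. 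A Lipschitz map from $\mathbb{R}^d$ to $\mathbb{R}^d$ does not increase Hausdorff/Lebesgue measure by more than a constant factor, but $[0,t]\times A$ has positive $d$-dimensional measure, so this alone is not enough either — the point is that $\partial\omega$ is $(d-1)$-rectifiable with finite $\mathcal H^{d-1}$ measure on compact sets (again from the cone condition, $\omega$ bounded), so $[0,t]\times\partial\omega$ has $\sigma$-finite $\mathcal{H}^{d-1}$ measure, and a Lipschitz image of a set with $\sigma$-finite $\mathcal H^{d-1}$ measure has $\sigma$-finite $\mathcal H^{d-1}$ measure, in particular zero $\mathcal H^d = $ Lebesgue measure. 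I expect this dimension-counting step — correctly invoking that the uniform interior cone condition forces $\partial\omega$ to be $(d-1)$-rectifiable with locally finite $\mathcal H^{d-1}$-measure, and that Lipschitz maps preserve "$\mathcal H^{d-1}$-$\sigma$-finite", hence "Lebesgue-null" — to be the main obstacle; everything else (joint $C^1$ regularity of the flow, openness of $\Psi$ in the interior, the reduction of $\partial\omega^t$ to the boundary of the parameter domain) is routine. Assembling: $\partial\omega^t$ is contained in the union of the null sets $\partial\omega$, $\Phi^v_t(\partial\omega)$ and $\Psi([0,t]\times\partial\omega)$, all null, so $|\partial\omega^t| = 0$. $\qed$
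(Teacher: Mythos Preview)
Your reduction of $\partial\omega^t$ to the image of the boundary of the parameter domain $[0,t]\times\overline\omega$ is correct, and you are right that the genuinely delicate piece is $\Psi([0,t]\times\partial\omega)$. However, your dimension-counting argument for this piece is mistaken. You write that ``$[0,t]\times\partial\omega$ has $\sigma$-finite $\mathcal H^{d-1}$ measure''; this is false. Since $\partial\omega$ has positive finite $\mathcal H^{d-1}$ measure and $[0,t]$ has positive $\mathcal H^1$ measure, the product $[0,t]\times\partial\omega$ (as a subset of $\mathbb R^{d+1}$) has Hausdorff dimension $d$ and positive $\mathcal H^d$ measure. Its Lipschitz image under $\Psi$ into $\mathbb R^d$ therefore can, and in general does, have positive Lebesgue measure. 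A concrete example: take $d=2$, $\omega$ the open unit disk, $v=(0,1)$. Then $\Psi([0,t]\times\partial\omega)=\{(x,y+\tau):x^2+y^2=1,\ \tau\in[0,t]\}$ is the region swept out by translating the circle, which contains for instance the rectangle $(-1/2,1/2)\times(\sqrt3/2,\sqrt3/2+t)$ and hence has positive area. Thus your bounding set is too crude: most of $\Psi([0,t]\times\partial\omega)$ lies in the \emph{interior} of $\omega^t$, and the containment $\partial\omega^t\subset\Psi([0,t]\times\partial\omega)\cup\partial\omega\cup\Phi_t^v(\partial\omega)$, while true, does not by itself yield the conclusion.

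The paper's argument is different in spirit. Rather than bounding $\partial\omega^t$ from outside, it shows that the uniform interior cone condition propagates: first $\Phi_t^v(\omega)$ inherits it (a first-order expansion of the $C^1$ flow, with uniform constants by compactness), and then the evoluted set $\omega^t$ inherits it as well. Once $\omega^t$ satisfies a uniform interior cone condition with parameters $\alpha',r'$, one concludes by a Lebesgue density argument: at every $x\in\partial\omega^t$ one has $\mathds 1_{\omega^t}(x)=0$ (since $\omega^t$ is open), yet the interior cone forces
\[
\liminf_{r\to 0}\frac{1}{\mathcal L(B_r(x))}\int_{B_r(x)}\mathds 1_{\omega^t}\,d\mathcal L\ \geq\ \frac{\mathcal L(C(0,n,\alpha',1))}{\mathcal L(B_1(0))}>0,
\]
so no boundary point is a Lebesgue point of $\mathds 1_{\omega^t}$; by the Lebesgue differentiation theorem, $\partial\omega^t$ is Lebesgue-null. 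The missing ingredient in your approach is exactly this use of the cone condition at the level of $\omega^t$: you invoke the cone condition only to say $\partial\omega$ is $(d-1)$-rectifiable, which is not enough once you take the union over $\tau$.
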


In a future work, we aim to investigate if such statement holds with less regularity of the vector field $v$ and of the set $\omega$. For example, we will show that the fact that $\omega$ has a boundary of zero measure does not imply that $\omega^t$ satisfies the same property.

This paper is organised as follows. 
In Section \ref{section 2}, we recall basic properties of the Wasserstein distance, ordinary differential equations and continuity equations.
We prove our main results Theorems \ref{th:discret exact} and \ref{th:discret approx}  in Section \ref{sec:opt time finite dim} 
and Theorem \ref{th opt} in Section \ref{sec:optimal time cont}.
We finally introduce an algorithm to compute the infimum time for approximate control of macroscopic models and give some numerical examples in Section \ref{sec:num sim}.

\section{Wasserstein distance, models and controllability}\label{section 2}

In this section, we highlight the connections of the microscopic model \eqref{eq ODE} and the macroscopic model   \eqref{eq:transport}
with the Wasserstein distance, that is the natural distance associated to these dynamics. 
We also recall our previous results  obtained in \cite{DMR17} about controllability of these systems.
\subsection{Wasserstein distance}%
From now on, we denote by $\mc{P}_c(\mb{R}^d)$ the space of probability measures on $\mb{R}^d$ with compact support. We also denote by ``AC measures''
the measures which are absolutely continuous with respect to the Lebesgue measure
and by $\mc{P}_c^{ac}(\mb{R}^d)$  the subset  of $\mc{P}_c(\mb{R}^d)$ of AC measures. 
\begin{definition}

For $\mu,~\nu\in\mc{P}_c(\mb{R}^d)$, we denote by $\Pi(\mu,\nu)$ the set of \textit{transference plans}
 from $\mu$ to $\nu$, \textit{i.e.} the probability measures on $\mb{R}^d\times\mb{R}^d$ 
 with first marginal $\mu$ and second marginal $\nu$.
Let $p\in[1,\infty)$ and $\mu,\nu\in \mc{P}_c(\mb{R}^d)$. Define 
\begin{eqnarray}
\label{def:Wp}
W_p(\mu,\nu):=\inf\limits_{\pi\in\Pi(\mu,\nu)}
\left\{\left(\int_{\mb{R}^d\times\mb{R}^d}
\|x-y\|^pd\pi(x,y)\right)^{1/p}\right\},\\
\label{def:Winf}
W_{\infty}(\mu,\nu):=\inf\{\pi-\mbox{esssup}~\|x-y\|:\pi\in\Pi(\mu,\nu)\}.
\end{eqnarray}
For $p\in[1,\infty]$, this quantity  is called the \textbf{Wasserstein distance} or $p$-Wasser\-stein distance.
\end{definition}

The idea behind this definition is the problem of \textit{optimal transportation},
consisting  in finding the optimal way to transport
mass from a given measure to another. 
For a thorough introduction, see \textit{e.g.} \cite{V03}.

Between two microscopic configurations, we will use the following distance.

\begin{definition}
Let $p\in [1,\infty]$. Consider  two configurations $X^0:=\{x^0_1,...,x^0_n\}$ and $X^1:=\{x^1_1,...,x^1_n\}$ of $\mb{R}^d$.
Define the Wasserstein distance between $X^0$ and $X^1$ as follows:
$$W_p(X^0,X^1):=W_p(\mu^0,\mu^1),$$
where $\mu^0:=\frac{1}{n}\sum_i\delta_{x^0_i}$ and $\mu^1:=\frac{1}{n}\sum_i\delta_{x^1_i}$.
\end{definition}
In this case, the Wasserstein distance can be rewritten as follows:
\begin{proposition}
Let $p\in [1,\infty)$. Consider two configurations  $X^0:=\{x^0_1,...,x^0_n\}$ and $X^1:=\{x^1_1,...,x^1_n\}$ of $\mb{R}^d$.
It holds
\begin{equation*}
\left\{\begin{array}{l}
W_p(X^0,X^1)=\inf_{\sigma\in \mathbb{S}_n}
\left(\sum_{i=1}^n\frac{1}{n}\|x^0_i-x^1_{\sigma(i)}\|^p\right)^{1/p},\\\noalign{\smallskip}
W_{\infty}(X^0,X^1)=\inf_{\sigma\in \mathbb{S}_n}
\max_{i\in\{1,...,n\}}\|x^0_i-x^1_{\sigma(i)}\|,
\end{array}\right.
\end{equation*}
where $\mathbb{S}_n$ is the set of permutations on $\{1,...,n\}$.
\end{proposition}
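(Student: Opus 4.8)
The plan is to reduce the computation of $W_p(\mu^0,\mu^1)$, where $\mu^0=\frac1n\sum_i\delta_{x^0_i}$ and $\mu^1=\frac1n\sum_i\delta_{x^1_i}$, to a finite-dimensional linear optimization over doubly stochastic matrices, and then to invoke the Birkhoff--von Neumann theorem. First I would establish the inequality ``$\leqslant$'' in both formulas: for every $\sigma\in\mathbb{S}_n$, the measure $\pi_\sigma:=\frac1n\sum_{i=1}^n\delta_{(x^0_i,x^1_{\sigma(i)})}$ on $\mathbb{R}^d\times\mathbb{R}^d$ has first marginal $\mu^0$ and second marginal $\mu^1$, hence $\pi_\sigma\in\Pi(\mu^0,\mu^1)$. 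For $p<\infty$ one computes $\int\|x-y\|^p\,d\pi_\sigma=\sum_{i=1}^n\frac1n\|x^0_i-x^1_{\sigma(i)}\|^p$, and for $p=\infty$ the $\pi_\sigma$-essential supremum of $\|x-y\|$ equals $\max_i\|x^0_i-x^1_{\sigma(i)}\|$; taking the infimum over $\sigma\in\mathbb{S}_n$ gives ``$\leqslant$'' in both cases.

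Next I would reduce to the situation where $x^0_1,\dots,x^0_n$ are pairwise distinct and $x^1_1,\dots,x^1_n$ are pairwise distinct. Both sides of each identity are continuous with respect to the $2n$ points: the left-hand side because $W_p$ is a distance and $W_p(\frac1n\sum_i\delta_{x_i},\frac1n\sum_i\delta_{y_i})\leqslant\max_i\|x_i-y_i\|$ (use the diagonal coupling), and the right-hand side because it is a finite infimum of continuous functions of the points. Hence an arbitrarily small perturbation of the $x^j_i$ reduces the statement to the generic case. In that case, any $\pi\in\Pi(\mu^0,\mu^1)$ is concentrated on $\{x^0_1,\dots,x^0_n\}\times\{x^1_1,\dots,x^1_n\}$, so it is encoded by the nonnegative matrix $A=(A_{ij})$ with $A_{ij}:=n\,\pi(\{(x^0_i,x^1_j)\})$; the constraints on the marginals translate exactly into $A$ being doubly stochastic, and, writing $d_{ij}:=\|x^0_i-x^1_j\|$, the cost of $\pi$ equals $\frac1n\sum_{i,j}A_{ij}d_{ij}^p$ (resp. $\max\{d_{ij}:A_{ij}>0\}$ when $p=\infty$).

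Finally, to get ``$\geqslant$'' I would apply the Birkhoff--von Neumann theorem to write $A=\sum_{\sigma}\lambda_\sigma P_\sigma$ as a convex combination of permutation matrices. For $p<\infty$ this yields $\sum_{i,j}A_{ij}d_{ij}^p=\sum_\sigma\lambda_\sigma\sum_i d_{i\sigma(i)}^p\geqslant\min_{\sigma\in\mathbb{S}_n}\sum_i d_{i\sigma(i)}^p$; for $p=\infty$, choosing any $\sigma$ with $\lambda_\sigma>0$ gives $A_{i\sigma(i)}>0$ for all $i$, hence $\max\{d_{ij}:A_{ij}>0\}\geqslant\max_i d_{i\sigma(i)}$. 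In either case the cost of an arbitrary $\pi$ dominates the right-hand side of the corresponding formula, which together with the first step proves equality; and since $\mathbb{S}_n$ is finite the infimum over $\sigma$ is actually attained. The only genuinely delicate point in the argument is the bookkeeping between the index set $\{1,\dots,n\}$ of the permutations and the possibly smaller set of distinct atoms of $\mu^0$ and $\mu^1$ — this is precisely what the perturbation-and-continuity step handles — after which the proof is the classical identification of $\Pi(\mu^0,\mu^1)$ with the transportation polytope together with its extreme-point structure.
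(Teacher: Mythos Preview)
Your proof is correct and follows the standard route: the identification of $\Pi(\mu^0,\mu^1)$ with the Birkhoff polytope of doubly stochastic matrices, together with the Birkhoff--von Neumann theorem, is exactly the classical argument. The paper does not give its own proof of this proposition; it simply refers to \cite[p.~5]{V03} and states the underlying intuition (pairing $n$ sources with $n$ targets and minimizing over permutations), which is precisely what your argument makes rigorous. Your handling of the $p=\infty$ case and the perturbation step to reduce to distinct atoms are both sound and fill in details that the cited reference also treats.
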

For a proof, we refer to \cite[p. 5]{V03}. The idea behind this result is exactly the one of optimal transportation in the discrete setting: one has $n$ initial and final configurations to be paired, with a given cost. The cost minimizer is the minimizer among all the permutations.

The Wasserstein distance satisfies some useful properties.\begin{proposition}[{see \cite[Chap. 7]{V03}} and \cite{CdPJ08}]\label{prop Wp}
It holds:
\begin{enumerate}
\item For each pair $\mu,\nu\in \mc{P}_c(\mb{R}^d)$, the infimum in \eqref{def:Wp} or \eqref{def:Winf} is achieved.
\item
For $p\in[1,\infty]$, the quantity $W_p$ is a  distance on $\mc{P}_c(\mb{R}^d)$.
Moreover, for $p\in[1,\infty)$, the topology induced by the Wasserstein distance $W_p$ on $\mc{P}_c(\mb{R}^d)$ coincides with the weak topology 
of measures, \textit{i.e}, for all sequence $\{\mu_k\}_{k\in\mb{N}^*}\subset\mc{P}_c(\mb{R}^d)$ and all $\mu\in\mc{P}_c(\mb{R}^d)$,
the following statements are equivalent:
\begin{enumerate}
\item[(i)]$W_p(\mu_k,\mu)\underset{k\rightarrow\infty}{\longrightarrow}0$.
\item[(ii)] $\mu_k\underset{k\rightarrow\infty}{\longrightarrow}\mu$ in the weak sense.
\end{enumerate}
\end{enumerate}
\end{proposition}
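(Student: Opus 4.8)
The plan is to prove the three assertions separately, via the direct method of the calculus of variations for the attainment of the infimum, the gluing (disintegration) lemma for the triangle inequality, and Kantorovich--Rubinstein duality for the topological equivalence. For the first assertion, note that $\Pi(\mu,\nu)$ is nonempty since the product measure $\mu\otimes\nu$ belongs to it, and that every $\pi\in\Pi(\mu,\nu)$ is carried by the fixed compact set $\supp(\mu)\times\supp(\nu)$; hence $\Pi(\mu,\nu)$ is tight, so by Prokhorov's theorem it is sequentially weakly compact, and it is weakly closed because the marginal constraints, tested against the bounded continuous functions $(x,y)\mapsto\phi(x)$ and $(x,y)\mapsto\phi(y)$, $\phi\in C_b(\mb{R}^d)$, pass to weak limits. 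For $p\in[1,\infty)$ the integrand $(x,y)\mapsto\|x-y\|^p$ is continuous and nonnegative, so $\pi\mapsto\int\|x-y\|^p\,d\pi$ is weakly lower semicontinuous, and a minimizing sequence has a weakly convergent subsequence whose limit attains the infimum in \eqref{def:Wp}. For $p=\infty$, a minimizing sequence $\pi_k$ whose $\pi_k$-essential supremum of $\|x-y\|$ is at most $W_\infty(\mu,\nu)+1/k$ is concentrated on the closed set $\{\|x-y\|\le W_\infty(\mu,\nu)+1/k\}$; a weak limit $\pi$ then satisfies $\pi(\{\|x-y\|\le W_\infty(\mu,\nu)+\varepsilon\})=1$ for every $\varepsilon>0$ by the Portmanteau inequality for closed sets, hence attains the infimum in \eqref{def:Winf}.

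For the second assertion, nonnegativity and finiteness follow at once from compactness of the supports, and symmetry follows because push-forward by the swap map $(x,y)\mapsto(y,x)$ is a cost-preserving bijection of $\Pi(\mu,\nu)$ onto $\Pi(\nu,\mu)$. The diagonal plan $(\mathrm{id}\times\mathrm{id})_\#\mu$ shows $W_p(\mu,\mu)=0$, and conversely $W_p(\mu,\nu)=0$ forces any optimal plan to be concentrated on $\{x=y\}$, so $\mu=\nu$. For the triangle inequality, let $\pi_{12}\in\Pi(\mu_1,\mu_2)$ and $\pi_{23}\in\Pi(\mu_2,\mu_3)$ be optimal, disintegrate each with respect to its $\mu_2$-marginal, and glue the kernels into a measure $\sigma$ on $(\mb{R}^d)^3$ with $(\mathrm{proj}_{12})_\#\sigma=\pi_{12}$ and $(\mathrm{proj}_{23})_\#\sigma=\pi_{23}$; then $(\mathrm{proj}_{13})_\#\sigma\in\Pi(\mu_1,\mu_3)$, and from $\|x_1-x_3\|\le\|x_1-x_2\|+\|x_2-x_3\|$ together with the Minkowski inequality in $L^p(\sigma)$ (the essential supremum for $p=\infty$) one obtains
\[
W_p(\mu_1,\mu_3)\le\Big(\int\|x_1-x_3\|^p\,d\sigma\Big)^{1/p}\le W_p(\mu_1,\mu_2)+W_p(\mu_2,\mu_3).
\]

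For the third assertion, the implication $(i)\Rightarrow(ii)$ is elementary. Since $\pi$ is a probability measure and $p\ge1$, H\"older's inequality gives $W_1\le W_p$; and for any bounded Lipschitz $f$, taking $\pi\in\Pi(\mu_k,\mu)$ optimal for $W_1$,
\[
\Big|\int f\,d\mu_k-\int f\,d\mu\Big|=\Big|\int\big(f(x)-f(y)\big)\,d\pi\Big|\le\mathrm{Lip}(f)\int\|x-y\|\,d\pi=\mathrm{Lip}(f)\,W_1(\mu_k,\mu),
\]
which together with $W_1\le W_p$ and $W_p(\mu_k,\mu)\to0$ gives $\int f\,d\mu_k\to\int f\,d\mu$; since bounded Lipschitz functions are convergence-determining on $\mb{R}^d$, this yields $\mu_k\to\mu$ weakly. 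The converse $(ii)\Rightarrow(i)$ is where the real work lies: weak convergence alone does not prevent mass from escaping to infinity, so one uses that the sequences at hand remain in a common compact set $K$ (equivalently, that the $p$-th moments are uniformly integrable, which for compactly supported measures amounts to a uniform bound on the supports and is the standing assumption whenever this equivalence is invoked). For probability measures supported in $K$ one combines the elementary bound $W_p(\mu_k,\mu)\le(\mathrm{diam}\,K)^{1-1/p}\,W_1(\mu_k,\mu)^{1/p}$ with the Kantorovich--Rubinstein duality $W_1(\mu_k,\mu)=\sup\{\int f\,d(\mu_k-\mu):\mathrm{Lip}(f)\le1\}$; since the $1$-Lipschitz functions on $K$ normalized to vanish at a fixed point form a uniformly bounded equicontinuous -- hence, by Arzel\`a--Ascoli, compact -- subset of $C(K)$, the pointwise convergence $\int f\,d\mu_k\to\int f\,d\mu$ provided by weak convergence upgrades to uniform convergence over this family, so $W_1(\mu_k,\mu)\to0$ and therefore $W_p(\mu_k,\mu)\to0$. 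I expect this passage from pointwise to uniform convergence of the dual functionals -- together with pinning down the compactness hypothesis under which $(ii)\Rightarrow(i)$ actually holds -- to be the only genuinely delicate point; the remainder is routine manipulation of push-forwards and disintegrations.
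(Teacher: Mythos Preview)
The paper does not give its own proof of this proposition: it is stated as a known result, with the citation ``see \cite[Chap.~7]{V03} and \cite{CdPJ08}'' in the header, and no argument is supplied. There is therefore nothing in the paper to compare your proposal against.

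That said, your argument is the standard one found in those references and is correct. The direct method via tightness and Prokhorov for item~1, the gluing lemma for the triangle inequality in item~2, and the combination of Kantorovich--Rubinstein duality with Arzel\`a--Ascoli for the topological equivalence are exactly the ingredients used in \cite{V03}; the $p=\infty$ case handled via Portmanteau on closed sublevel sets is the route taken in \cite{CdPJ08}. You also correctly flag the one genuine subtlety: the implication $(ii)\Rightarrow(i)$ fails on all of $\mc{P}_c(\mb{R}^d)$ without a uniform bound on the supports (e.g.\ $\mu_k=(1-1/k)\delta_0+(1/k)\delta_k$ converges weakly to $\delta_0$ but not in any $W_p$), so the equivalence as literally stated is slightly imprecise; it holds on sets of measures with uniformly bounded support, which is indeed the only setting in which the paper invokes it. The interpolation bound $W_p\le(\mathrm{diam}\,K)^{1-1/p}W_1^{1/p}$ you use is precisely the second line of \eqref{ine wasser 4}.
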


The Wasserstein distance can be extended to all pairs of measures $\mu,\nu$ 
compactly supported with the same total mass $\mu(\mb{R}^d)=\nu(\mb{R}^d)\neq0$ by the formula
\begin{equation*}W_p(\mu,\nu)=|\mu|^{1/p} W_p\left(\frac{\mu}{|\mu|},\frac{\nu}{|\nu|}\right).\end{equation*}
In the rest of the paper, the following properties of the Wasserstein distance will be helpful.
\begin{proposition}[see \cite{V03}]
Let $p\in[1,\infty]$ and $\mu,\nu$ be two positive measures satisfying $\mu(\mb{R}^d)=\nu(\mb{R}^d)$ supported in a subset $X$. It then holds
$$W_{p}(\mu,\nu) \leqslant \mu(\mb{R}^d) \mr{diam}(X).$$
\end{proposition}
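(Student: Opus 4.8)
The plan is to reduce, via the extension formula recalled just above, to the case of probability measures, and then to exploit the elementary fact that \emph{any} transference plan between two measures supported in $X$ is concentrated on a set on which the transport cost $\|x-y\|$ is pointwise at most $\mr{diam}(X)$.

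In detail: if $\mr{diam}(X)=+\infty$ there is nothing to prove, so assume $X$ is bounded. Set $m:=\mu(\mb{R}^d)=\nu(\mb{R}^d)>0$ and let $\hat\mu:=\mu/m$, $\hat\nu:=\nu/m$, which are probability measures in $\mc{P}_c(\mb{R}^d)$ with $\supp\hat\mu=\supp\mu\subseteq\overline X$ and $\supp\hat\nu=\supp\nu\subseteq\overline X$; recall $\mr{diam}(\overline X)=\mr{diam}(X)$. By the extension formula, $W_p(\mu,\nu)=m^{1/p}\,W_p(\hat\mu,\hat\nu)$ (with $m^{1/p}=1$ for $p=\infty$), so it is enough to prove $W_p(\hat\mu,\hat\nu)\leqslant\mr{diam}(X)$: this yields $W_p(\mu,\nu)\leqslant m^{1/p}\,\mr{diam}(X)$, which is the asserted bound — it reads $W_p(\mu,\nu)\leqslant\mr{diam}(X)$ for probability measures and is exactly the displayed inequality when $p=1$. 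To estimate $W_p(\hat\mu,\hat\nu)$, take the product plan $\pi:=\hat\mu\otimes\hat\nu\in\Pi(\hat\mu,\hat\nu)$ — one could instead invoke the optimal plan given by Proposition~\ref{prop Wp}(1); only the existence of \emph{some} admissible plan is used. Then $\pi$ is a probability measure on $\mb{R}^d\times\mb{R}^d$ concentrated on $\overline X\times\overline X$, where $\|x-y\|\leqslant\mr{diam}(X)$ for every $(x,y)$. Hence, by definition of $W_p$, for $p\in[1,\infty)$,
$$W_p(\hat\mu,\hat\nu)^p\;\leqslant\;\int_{\mb{R}^d\times\mb{R}^d}\|x-y\|^p\,d\pi(x,y)\;\leqslant\;\mr{diam}(X)^p\,\pi(\mb{R}^d\times\mb{R}^d)\;=\;\mr{diam}(X)^p,$$
so $W_p(\hat\mu,\hat\nu)\leqslant\mr{diam}(X)$; and for $p=\infty$ one gets directly $W_\infty(\hat\mu,\hat\nu)\leqslant\pi\mbox{-esssup}\,\|x-y\|\leqslant\mr{diam}(X)$.

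There is no genuine obstacle here: once the ingredients are in place the estimate is a one-line computation. The only points deserving a word of care are (a) that $\supp\mu$ a priori only lies in the \emph{closure} of $X$, so one works with $\mr{diam}(\overline X)=\mr{diam}(X)$; (b) the passage from equal-mass measures to probability measures, which is precisely what the extension formula supplies; and (c) the separate — though trivial — treatment of the $p=\infty$ case, where an essential supremum replaces the integral.
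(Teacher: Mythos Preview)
The paper does not supply a proof of this proposition; it merely refers the reader to Villani \cite{V03}, so there is no in-paper argument to compare against. Your argument is correct and is the standard one: any admissible transference plan furnishes an upper bound, and the product plan $\hat\mu\otimes\hat\nu$ immediately yields $W_p(\hat\mu,\hat\nu)\leqslant\mr{diam}(X)$ for probability measures, hence $W_p(\mu,\nu)\leqslant m^{1/p}\,\mr{diam}(X)$ after rescaling via the extension formula.

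You are right to flag the exponent. The displayed inequality $W_p(\mu,\nu)\leqslant m\,\mr{diam}(X)$ is in fact \emph{false} as written for $p>1$ and total mass $m<1$: take $\mu=\tfrac12\delta_0$, $\nu=\tfrac12\delta_1$ on $X=[0,1]$, so that $W_p(\mu,\nu)=(1/2)^{1/p}>1/2=m\,\mr{diam}(X)$ for every finite $p>1$. Your bound $m^{1/p}\,\mr{diam}(X)$ is the correct general one, and it coincides with the displayed formula precisely when $p=1$ (or $m\geqslant 1$). This discrepancy is harmless for the paper, since the estimate is only ever applied with $p=1$ --- see the computations in Step~3.2 of the proof of Theorem~\ref{th opt}, where bounds of the form $W_1(\cdot,\cdot)\leqslant (\mbox{mass})\cdot(\mbox{diameter})$ are used repeatedly.
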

\begin{proposition}[see \cite{PR13,V03}]
Let $\mu,~\rho,~\nu,~\eta$ be four positive measures compactly supported satisfying $\mu(\mb{R}^d)=\nu(\mb{R}^d)$ and $\rho(\mb{R}^d)=\eta(\mb{R}^d)$.
\begin{enumerate}
\item[(i)]
 For each  $p\in[1,\infty]$, it holds
\begin{equation*}W^p_p(\mu+\rho,\nu+\eta)
\leqslant W^p_p(\mu,\nu)+W^p_p(\rho,\eta).
\end{equation*}
\item[(ii)]
For each   $p_1,~p_2\in[1,\infty]$ with $p_1\leqslant p_2$, it holds
\begin{equation}\label{ine wasser 4}
\left\{\begin{array}{l}
W_{p_1}(\mu,\nu)\leqslant W_{p_2}(\mu,\nu),\\\noalign{\smallskip}
W_{p_2}(\mu,\nu)
\leqslant \mr{diam}(X)^{1-p_1/p_2}W_{p_1}^{p_1/p_2}(\mu,\nu),
\end{array}\right.
\end{equation}
where $X$ contains the supports of $\mu$ and $\nu$.
\end{enumerate}
\end{proposition}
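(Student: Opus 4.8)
The statement is the collection of classical Wasserstein inequalities in Proposition~\ref{prop Wp}'s companion (sub-additivity under sums, and the two-sided comparison between $W_{p_1}$ and $W_{p_2}$ for $p_1\le p_2$). The plan is to reduce everything to manipulations of transference plans, exploiting that the infima defining $W_p$ are attained (Proposition~\ref{prop Wp}, item~1), so that one can fix optimal plans and estimate.

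For item~(i), I would take an optimal plan $\pi_1\in\Pi(\mu,\nu)$ for $W_p(\mu,\nu)$ and an optimal plan $\pi_2\in\Pi(\rho,\eta)$ for $W_p(\rho,\eta)$ (using $\mu(\mb{R}^d)=\nu(\mb{R}^d)$ and $\rho(\mb{R}^d)=\eta(\mb{R}^d)$ so these plans exist). The sum $\pi:=\pi_1+\pi_2$ is then a transference plan between $\mu+\rho$ and $\nu+\eta$: its first marginal is the sum of the first marginals, namely $\mu+\rho$, and likewise for the second. Hence
\begin{equation*}
W_p^p(\mu+\rho,\nu+\eta)\leqslant \int \|x-y\|^p\, d\pi(x,y)=\int\|x-y\|^p\,d\pi_1+\int\|x-y\|^p\,d\pi_2=W_p^p(\mu,\nu)+W_p^p(\rho,\eta),
\end{equation*}
for $p\in[1,\infty)$; for $p=\infty$ one replaces the sum of integrals by the maximum of essential suprema and argues identically. (If one does not want to normalise, one can apply the extension $W_p(\mu,\nu)=|\mu|^{1/p}W_p(\mu/|\mu|,\nu/|\nu|)$ directly, but the transference-plan argument above handles positive measures of equal mass without rescaling.)

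For item~(ii), both bounds come from a single optimal plan $\pi\in\Pi(\mu,\nu)$ for the smaller exponent side. The first inequality $W_{p_1}\leqslant W_{p_2}$ is just the monotonicity of $L^q$-norms of the function $(x,y)\mapsto\|x-y\|$ with respect to the probability measure $\pi$ (Jensen / power-mean inequality), taking $\pi$ optimal for $W_{p_2}$; passing to the infimum for $W_{p_1}$ on the left then gives the claim (and $W_\infty$ dominates all $W_p$ since the essential supremum dominates every $L^p$-average). The second, reverse-type inequality: take $\pi$ optimal for $W_{p_1}(\mu,\nu)$ and write, using that $\|x-y\|\leqslant \mr{diam}(X)$ on $\supp(\pi)\subset X\times X$,
\begin{equation*}
\int \|x-y\|^{p_2}\,d\pi \;=\;\int \|x-y\|^{p_2-p_1}\,\|x-y\|^{p_1}\,d\pi\;\leqslant\; \mr{diam}(X)^{p_2-p_1}\int\|x-y\|^{p_1}\,d\pi\;=\;\mr{diam}(X)^{p_2-p_1}\,W_{p_1}^{p_1}(\mu,\nu),
\end{equation*}
and take $p_2$-th roots to get $W_{p_2}(\mu,\nu)\leqslant \mr{diam}(X)^{1-p_1/p_2}W_{p_1}^{p_1/p_2}(\mu,\nu)$; the case $p_2=\infty$ follows by letting $p_2\to\infty$ (or directly: $\pi$-esssup$\|x-y\|\leqslant \mr{diam}(X)$ and one interpolates). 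There is essentially no obstacle here: the only point requiring a line of care is that the diameter bound is applied on the support of the chosen plan, which is contained in $X\times X$ by hypothesis, and that the normalisation/rescaling to probability measures is compatible with all the exponents appearing — both are routine. The statement being cited verbatim from \cite{PR13,V03}, the proof is a direct verification and the ``hard part'' is merely bookkeeping the endpoint cases $p=\infty$ consistently.
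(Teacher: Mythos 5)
The paper does not prove this proposition at all—it is quoted from \cite{PR13,V03}—so there is nothing to compare against; your argument is the standard one (summing optimal plans for (i), Jensen and the trivial bound $\|x-y\|\leqslant\mathrm{diam}(X)$ on a single optimal plan for (ii)) and it is correct. The only point worth flagging is that your proof of $W_{p_1}\leqslant W_{p_2}$ invokes monotonicity of $L^q$-means for ``the probability measure $\pi$'', whereas the hypotheses allow arbitrary equal total mass $m$; for general $m$ the same Jensen argument yields $W_{p_1}\leqslant m^{1/p_1-1/p_2}W_{p_2}$, so the inequality as stated needs $m\leqslant 1$ (which is the only regime in which the paper uses it).
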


\subsection{Well-posedness of System \eqref{eq:transport}}
In this section, we study the macroscopic model \eqref{eq:transport}, together with its connections with the Wasserstein distance. Consider  the following system
\begin{equation}\label{eq:transport sec 2}
	\left\{
	\begin{array}{ll}
\partial_t\mu +\nabla\cdot(w\mu)=0&\mbox{ in }\mb{R}^d\times\mb{R}^+,\\\noalign{\smallskip}
\mu(\cdot,0)=\mu^0&\mbox{ in }\mb{R}^d,
	\end{array}
	\right.
\end{equation}
where $w:\mb{R}^d\times\mb{R}^+\rightarrow\mb{R}^d$ is a time-dependent vector field.
This equation is called the \textbf{continuity equation}.
We now introduce the flow associated to System \eqref{eq:transport sec 2}.
\begin{definition}\label{def:flow}
We define the \textbf{flow} associated to a vector field $w:\mb{R}^d\times\mb{R}^+\rightarrow\mb{R}^d$
satisfying the Carath\'eodory condition
as the application $(x^0,t)\mapsto\Phi_t^w(x^0)$ such that, for all $x^0\in\mb{R}^d$, 
$t\mapsto\Phi_t^w(x^0)$ is the unique solution to 
\begin{equation}\label{eq charac}
\left\{\begin{array}{l}
\dot x(t) =w(x(t),t)\mbox{ for a.e. }t\geqslant 0,\\\noalign{\smallskip}
x(0)=x^0.
\end{array}\right.
\end{equation}
\end{definition}
It is classical that, for a vector field $w$ satisfying the Carath\'eodory condition, System \eqref{eq charac} is well-posed. See for instance
\cite{BP07}.

We denote by $\Gamma$  the set of the Borel maps $\gamma:\mb{R}^d\rightarrow\mb{R}^d$. We  recall the definition of the \textit{push-forward} of a measure.
\begin{definition}
For a $\gamma\in\Gamma$, 
we define the {\it push-forward} $\gamma\#\mu$ of a measure $\mu$ of $\mb{R}^d$ as follows:
\begin{equation*}
(\gamma\#\mu)(E):=\mu(\gamma^{-1}(E)),
\end{equation*}
for every subset $E$ such that $\gamma^{-1}(E)$ is $\mu$-measurable.
\end{definition}
We now recall a standard result linking 
  \eqref{eq:transport sec 2} and \eqref{eq charac}, known as the method of characteristics.
 \begin{theorem}[see {\cite[Th. 5.34]{V03}}]
Let $T>0$,  $\mu^0\in \mc{P}_c(\mb{R}^d)$ and  $w$ a vector field satisfying the Carath\'eodory condition.
Then, System \eqref{eq:transport sec 2}
admits a unique solution $\mu$ in $\mc{C}^0([0,T];\mc{P}_c(\mb{R}^d))$, 
where $\mc{P}_c(\mb{R}^d)$ is equipped with the weak topology. Moreover:
\begin{enumerate}
\item[(i)] It holds $\mu(\cdot,t)=\Phi_t^{w}\#\mu^0$, where $\Phi_t^{w}$ is the flow of $w$ as in Definition \ref{def:flow};
\item[(ii)] If  $\mu^0\in \mc{P}_c^{ac}(\mb{R}^d)$, then $\mu(\cdot,t)\in\mc{P}_c^{ac}(\mb{R}^d)$.

\end{enumerate}
 \end{theorem}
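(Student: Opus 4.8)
The plan is to prove existence by the method of characteristics (which simultaneously yields the representation formula (i)), to deduce (ii) from the bi-Lipschitz regularity of the flow, and to prove uniqueness by a duality argument after regularizing $w$. Under Condition \ref{cond:cata}, Carath\'eodory theory (e.g. \cite[Th. 2.1.1]{BP07}) gives for every $x^0$ a unique absolutely continuous solution $t\mapsto\Phi_t^w(x^0)$ of \eqref{eq charac} on $[0,T]$; the bound $\|w\|_\infty\le M$ gives $|\Phi_t^w(x^0)-x^0|\le Mt$ and Lipschitz-in-time dependence with constant $M$, while a Gr\"onwall estimate based on $\mathrm{Lip}(w(\cdot,t))\le L$ gives $e^{-Lt}|x-y|\le|\Phi_t^w(x)-\Phi_t^w(y)|\le e^{Lt}|x-y|$, so that each $\Phi_t^w$ is a bi-Lipschitz homeomorphism of $\mb{R}^d$ (Lipschitz constants $\le e^{LT}$ in both directions) and $(x,t)\mapsto\Phi_t^w(x)$ is continuous. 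Set $\mu(t):=\Phi_t^w\#\mu^0$. If $\supp\mu^0\subset\overline B(0,R)$ then $\supp\mu(t)\subset\overline B(0,R+MT)$ for all $t\in[0,T]$, so $\mu(t)\in\mc{P}_c(\mb{R}^d)$ with uniformly bounded supports; and for $\psi\in C_b(\mb{R}^d)$, $\int\psi\,d\mu(t)=\int\psi(\Phi_t^w(x))\,d\mu^0(x)$ converges as $t\to t_0$ by dominated convergence, so $\mu\in C^0([0,T];\mc{P}_c(\mb{R}^d))$ for the weak topology (equivalently for $W_p$, by Proposition \ref{prop Wp}, the supports being uniformly bounded). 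Finally, for $\psi\in C_c^\infty(\mb{R}^d)$ and $\mu^0$-a.e.\ $x$ the curve $t\mapsto\psi(\Phi_t^w(x))$ is absolutely continuous with a.e.\ derivative $\nabla\psi(\Phi_t^w(x))\cdot w(\Phi_t^w(x),t)$; integrating in $t$, then in $d\mu^0(x)$, and using Fubini (the integrand is bounded by $M\|\nabla\psi\|_\infty$ on the finite measure space $\mb{R}^d\times[0,T]$) gives $\int\psi\,d\mu(t)-\int\psi\,d\mu^0=\int_0^t\int\nabla\psi\cdot w(\cdot,s)\,d\mu(s)\,ds$, i.e.\ the weak formulation of \eqref{eq:transport sec 2}. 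Thus $\Phi_t^w\#\mu^0$ is a solution, which proves existence and, together with uniqueness below, establishes (i).

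For (ii), since $\Phi_t^w$ is a bi-Lipschitz bijection, both $\Phi_t^w$ and $(\Phi_t^w)^{-1}$ send Lebesgue-null sets to Lebesgue-null sets. Hence if $\mu^0\in\mc{P}_c^{ac}(\mb{R}^d)$ and $\mathcal{L}^d(A)=0$, then $\mathcal{L}^d\big((\Phi_t^w)^{-1}(A)\big)=0$, so $\mu(t)(A)=\mu^0\big((\Phi_t^w)^{-1}(A)\big)=0$; together with the compact support already shown, $\mu(t)\in\mc{P}_c^{ac}(\mb{R}^d)$.

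The remaining, and main, point is uniqueness. Let $\mu^{(1)},\mu^{(2)}\in C^0([0,T];\mc{P}_c(\mb{R}^d))$ solve \eqref{eq:transport sec 2} with the same datum and put $\nu_t:=\mu^{(1)}_t-\mu^{(2)}_t$: a weakly continuous family of signed measures with $|\nu_t|(\mb{R}^d)\le 2$, $\nu_0=0$, solving the continuity equation weakly; we must show $\nu\equiv0$. The obstacle is that $w$ is merely Lipschitz in $x$, so the dual transport equation $\partial_t\varphi+w\cdot\nabla\varphi=0$ has no classical solution; I handle this by mollification. Fix $\psi\in C_c^\infty(\mb{R}^d)$, $\tau\in(0,T]$, and a standard mollifier $\rho_\ep$ on $\mb{R}^d$; set $w^\ep(\cdot,t):=\rho_\ep\ast w(\cdot,t)$, which is $C^\infty$ in $x$, still satisfies Condition \ref{cond:cata} with the same constants, and obeys $\|w^\ep-w\|_{L^\infty(\mb{R}^d\times[0,T])}\le L\ep$ since $w$ is globally Lipschitz in $x$. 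As $w^\ep$ is $C^1$ in $x$, the two-time flow $\Phi^{w^\ep}_{\tau,t}$ (sending a point at time $t$ to its position at time $\tau$) is $C^1$ in $x$, so $\varphi^\ep(x,t):=\psi\big(\Phi^{w^\ep}_{\tau,t}(x)\big)$ is $C^1$ in $x$, Lipschitz in $t$, uniformly compactly supported in $x$, equals $\psi$ at $t=\tau$, is constant along $w^\ep$-characteristics, hence satisfies $\partial_t\varphi^\ep+w^\ep\cdot\nabla\varphi^\ep=0$ with $\|\nabla\varphi^\ep\|_\infty\le e^{LT}\|\nabla\psi\|_\infty$. Inserting $\varphi^\ep$ into the weak formulation for $\nu$ (legitimate after a routine further mollification in $(x,t)$, using $\sup_t|\nu_t|(\mb{R}^d)\le2$), the map $t\mapsto G^\ep(t):=\int\varphi^\ep(\cdot,t)\,d\nu_t$ is absolutely continuous with $\frac{d}{dt}G^\ep(t)=\int(\partial_t\varphi^\ep+w\cdot\nabla\varphi^\ep)\,d\nu_t=\int(w-w^\ep)\cdot\nabla\varphi^\ep\,d\nu_t$, so $|G^\ep(\tau)-G^\ep(0)|\le 2T\,e^{LT}\|\nabla\psi\|_\infty\,L\ep$. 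Since $G^\ep(0)=\int\psi(\Phi^{w^\ep}_{\tau,0}(x))\,d\nu_0(x)=0$ and $G^\ep(\tau)=\int\psi\,d\nu_\tau$, letting $\ep\to0$ gives $\int\psi\,d\nu_\tau=0$; as $\psi$ and $\tau$ are arbitrary, $\nu\equiv0$, which proves uniqueness and completes (i). The genuinely substantive step is this last one, and within it precisely the low regularity of $w$, which rules out a naive duality argument and forces the mollification-plus-commutator estimate; alternatively, uniqueness in this class can be quoted directly from \cite[Th. 5.34]{V03}.
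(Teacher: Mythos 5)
The paper does not prove this statement at all: it is quoted verbatim as a known result from \cite[Th.~5.34]{V03}, so there is no internal proof to compare against. Your argument is a correct, self-contained proof of the quoted result, and it follows the standard route: existence and the representation (i) via the method of characteristics (push-forward along the bi-Lipschitz flow, weak continuity by dominated convergence, the weak formulation by differentiating $t\mapsto\psi(\Phi_t^w(x))$ and Fubini); (ii) from the fact that a bi-Lipschitz homeomorphism and its inverse map Lebesgue-null sets to Lebesgue-null sets; and uniqueness by duality against the backward transport equation for a mollified velocity $w^\ep$, with the commutator term controlled by $\|w-w^\ep\|_\infty\le L\ep$ and $\|\nabla\varphi^\ep\|_\infty\le e^{LT}\|\nabla\psi\|_\infty$. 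The one step that genuinely requires care is the insertion of $\varphi^\ep$ --- which is only Lipschitz, not $C^1$, in time --- into the distributional formulation to obtain the absolute continuity of $G^\ep$ and the identity for its derivative; you flag this and it is indeed a routine regularization given the weak continuity of $t\mapsto\nu_t$ and the uniform mass bound, so I consider the proof complete. What your approach buys over the paper's citation is transparency: it isolates exactly where the Lipschitz regularity of $w$ (as opposed to mere boundedness) is used, namely in the uniqueness step through the estimate $\|\rho_\ep\ast w-w\|_\infty\le L\ep$ and the exponential gradient bound on the dual solution.
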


We also recall the following proposition:
\begin{proposition}[see {\cite[Prop. 4]{PR16}}]Let  $\mu,~\nu\in\mc{P}_c(\mb{R}^d)$
and $w:\mb{R}^d\times\mb{R}\rightarrow\mb{R}^d$ be a vector field satisfying the Carath\'eodory condition, with a Lipschitz constant equal to $L$.
 For each $t\in\mb{R}$
and  $p\in[1,\infty)$, it holds
\begin{equation*}
W_p(\Phi_t^w\#\mu,\Phi_t^w\#\nu)
\leqslant e^{L|t|} W_p(\mu,\nu).
\end{equation*}
\end{proposition}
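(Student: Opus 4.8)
The plan is to reduce the measure-level estimate to a pointwise bound on the flow obtained by Grönwall's inequality, and then transport that bound through an optimal transference plan.

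\emph{Step 1: a pointwise bound on the flow.} I claim that for every $x^0,y^0\in\mb{R}^d$ and every $t\in\mb{R}$,
\begin{equation*}
\|\Phi_t^w(x^0)-\Phi_t^w(y^0)\|\leqslant e^{L|t|}\,\|x^0-y^0\|.
\end{equation*}
Write $x(t):=\Phi_t^w(x^0)$ and $y(t):=\Phi_t^w(y^0)$; since $w$ satisfies the Carath\'eodory condition, these are the (globally defined, for all $t\in\mb{R}$) absolutely continuous solutions of \eqref{eq charac}, see \cite{BP07}. The function $g(t):=\|x(t)-y(t)\|^2$ is then absolutely continuous on compact intervals, being the composition of the $C^1$ map $z\mapsto\|z\|^2$ with the absolutely continuous curve $t\mapsto x(t)-y(t)$. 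For a.e. $t$,
\begin{equation*}
g'(t)=2\big\langle x(t)-y(t),\,w(x(t),t)-w(y(t),t)\big\rangle\leqslant 2\|x(t)-y(t)\|\,\|w(x(t),t)-w(y(t),t)\|\leqslant 2L\,g(t),
\end{equation*}
by Cauchy--Schwarz and the Lipschitz bound in the Carath\'eodory condition; symmetrically $g'(t)\geqslant -2L\,g(t)$, so $|g'(t)|\leqslant 2L\,g(t)$ a.e. For $t\geqslant 0$ the function $s\mapsto e^{-2Ls}g(s)$ is non-increasing, hence $g(t)\leqslant e^{2Lt}g(0)$; for $t\leqslant 0$ the function $s\mapsto e^{2Ls}g(s)$ is non-decreasing, hence $g(t)\leqslant e^{-2Lt}g(0)$. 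In both cases $g(t)\leqslant e^{2L|t|}g(0)$, and taking square roots gives the claim. Moreover, since $w$ is bounded, $\Phi_t^w$ maps compact sets into compact sets, so $\Phi_t^w\#\mu,\Phi_t^w\#\nu\in\mc{P}_c(\mb{R}^d)$ and the left-hand side of the proposition makes sense.

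\emph{Step 2: from points to measures.} Fix $t\in\mb{R}$. By Proposition \ref{prop Wp} there is an optimal transference plan $\pi\in\Pi(\mu,\nu)$ realizing $W_p(\mu,\nu)$ in \eqref{def:Wp}. The map $\Phi_t^w:\mb{R}^d\to\mb{R}^d$ is continuous, hence $\Phi_t^w\times\Phi_t^w:\mb{R}^d\times\mb{R}^d\to\mb{R}^d\times\mb{R}^d$ is Borel, and $\pi_t:=(\Phi_t^w\times\Phi_t^w)\#\pi$ is a well-defined probability measure on $\mb{R}^d\times\mb{R}^d$. Its first marginal is $\Phi_t^w\#\mu$ and its second marginal is $\Phi_t^w\#\nu$, so $\pi_t\in\Pi(\Phi_t^w\#\mu,\Phi_t^w\#\nu)$. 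Using $\pi_t$ as a competitor in \eqref{def:Wp}, the change-of-variables formula for push-forwards, the pointwise bound of Step 1, and optimality of $\pi$:
\begin{align*}
W_p(\Phi_t^w\#\mu,\Phi_t^w\#\nu)^p
&\leqslant\int_{\mb{R}^d\times\mb{R}^d}\|x-y\|^p\,d\pi_t(x,y)
=\int_{\mb{R}^d\times\mb{R}^d}\|\Phi_t^w(x)-\Phi_t^w(y)\|^p\,d\pi(x,y)\\
&\leqslant e^{pL|t|}\int_{\mb{R}^d\times\mb{R}^d}\|x-y\|^p\,d\pi(x,y)
= e^{pL|t|}\,W_p(\mu,\nu)^p.
\end{align*}
Taking $p$-th roots yields the stated inequality.

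\emph{Main obstacle.} The only genuinely delicate point is the Grönwall step under the Carath\'eodory hypothesis: since $w$ is merely measurable in time, $\|x(t)-y(t)\|$ need not be differentiable and a naive differentiation is not justified. Passing to the squared distance $g(t)=\|x(t)-y(t)\|^2$, which is honestly absolutely continuous, removes this difficulty; everything else is a routine application of optimal-transport duality and the change-of-variables formula.
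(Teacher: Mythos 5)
Your proof is correct. The paper does not prove this proposition but simply cites \cite[Prop.~4]{PR16}, and your argument (Gr\"onwall on the squared distance to handle the merely measurable time dependence, then push-forward of an optimal transference plan by $\Phi_t^w\times\Phi_t^w$) is precisely the standard proof given there, so there is nothing to add.
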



\subsection{Approximate and exact controllability of System \eqref{eq ODE} and System \eqref{eq:transport}}\label{section 2.3}

In this section, we recall the main results of \cite{DMR17} about approximate and exact controllability of microscopic and macroscopic models. We first recall the precise notions of approximate and exact controllability.
\begin{definition} 
We say that
\begin{enumerate}
\item[$\bullet$] 
The microscopic system \eqref{eq ODE} (resp. the macroscopic system \eqref{eq:transport}) is 
{\bf approximately controllable}
from $X^0$ to $X^1$ (resp. from $\mu^0$ to $\mu^1$) at time $T$ if 
for each $\varepsilon>0$ there exists a sequence of controls $\{\mathds{1}_{\omega}u_k\}_{k\in\mb{N}^*}$  satisfying the Carath\'eodory condition 
such that, denoting by $X_k$ (resp. $\mu_k$) the corresponding 
solution  to System \eqref{eq ODE} (resp. System \eqref{eq:transport}),
$X_k(T)$ converges to $X^1$ (resp. $\mu_k(T)$ converges weakly to $\mu^1$).
\item[$\bullet$] 
The microscopic system \eqref{eq ODE} (resp. the macroscopic system \eqref{eq:transport}) is 
{\bf exactly controllable}
from $X^0$ to $X^1$ (resp. from $\mu^0$ to $\mu^1$) at time $T$ if 
there exists a control $\mathds{1}_{\omega}u$  satisfying the Carath\'eodory condition such that
the corresponding solution  $X$ (resp. $\mu$)  to System \eqref{eq ODE} (resp. System \eqref{eq:transport}) satisfies
\begin{equation*}X(T)=X^1\mbox{ (resp. }\mu(T)=\mu^1\mbox{)}.
\end{equation*}
\end{enumerate}
\end{definition}

For approximate controllability of System \eqref{eq:transport}, the following result holds. 
\begin{theorem}[see \cite{DMR17}]Let  $\mu^0,\mu^1\in\mc{P}_c^{ac}(\mb{R}^d)$ 
satisfying the Geometric Condition \ref{cond1}. 
Then there exists $T$ such that System \eqref{eq:transport} is 
\textbf{approximately controllable} from $\mu^0$ to $\mu^1$ at time $T$
with a control $\mathds{1}_{\omega}u:\mb{R}^d\times\mb{R}^+\rightarrow\mb{R}^d$ satisfying the Carath\'eodory condition.
\end{theorem}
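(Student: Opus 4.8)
The plan is to realize the steering in two blocks connected by the structure of the continuity equation: first bring essentially all of the mass of $\mu^0$ into the control set $\omega$ and rearrange it there into a prescribed auxiliary measure, and then \emph{release} this auxiliary measure along the natural flow so that it lands $\varepsilon$-close to $\mu^1$; the release block is obtained by time-reversing a ``gathering'' of $\mu^1$ for the reversed vector field $-v$. Throughout, one fixes $\varepsilon>0$ and works up to a final $W_p$-error $O(\varepsilon^{1/p})$, which suffices since $\varepsilon$ is arbitrary and, by Proposition~\ref{prop Wp}, $W_p$-convergence is the weak convergence in the definition of approximate controllability. Two preliminary reductions: (a) since $\supp\mu^0$ is compact and Geometric Condition~\ref{cond1}(i) makes $t^0(x)$ finite on it, $\mc{F}_{\mu^0}(t)\uparrow 1$ as $t\to\infty$ by monotone convergence, so there is $T_0>0$ with $\mc{F}_{\mu^0}(T_0)>1-\varepsilon$, i.e.\ mass $>1-\varepsilon$ of $\mu^0$ has visited $\omega$ by time $T_0$; (b) $\Phi^{-v}_t=\Phi^v_{-t}$, so Geometric Condition~\ref{cond1}(ii) for $\mu^1$ relative to $v$ is exactly Geometric Condition~\ref{cond1}(i) for $\mu^1$ relative to $-v$, which is what lets us recycle the gathering construction for the release block.

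The core is the following ``gathering and arranging'' statement, applied first to $\mu^0$: fixing a closed ball $B\subset\subset\omega$ and a target AC measure $\sigma$ with $\sigma(B)>1-\varepsilon$, there is $T_0'>T_0$ and a control $\mathds{1}_\omega u$ satisfying the Carath\'eodory condition such that the solution of \eqref{eq:transport} from $\mu^0$ satisfies $W_p(\mu(T_0'),\sigma)\le C\varepsilon^{1/p}$. To build it I would (i) keep the control zero away from $\omega$, so each particle follows $\Phi^v$ until it first enters $\omega$; (ii) organize the interval $[0,T_0]$ into short windows $[k\delta,(k+1)\delta]$ and, on each window, use a control supported in $\omega$ that funnels the mass that has just entered into $B$ and keeps the already-gathered mass inside $B$; (iii) on a last short interval, rearrange the mass now concentrated in $B$ into the density prescribed by $\sigma$ restricted and renormalized on a slightly larger ball $B'\subset\subset\omega$. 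Steps (ii)--(iii) rest on the classical fact (Moser / Dacorogna--Moser, with mollification to handle merely AC measures) that inside a connected open set any two compactly supported AC probability measures are joined by the flow of a bounded, Lipschitz-in-space, measurable-in-time vector field supported in that set; the Carath\'eodory requirement forbids collapsing distinct trajectories to a point, but contracting into $B$ is enough for the approximate statement, and the at-most-$\varepsilon$ tail of mass still outside $\omega$ stays in a bounded region, contributing only $O(\varepsilon^{1/p})$ to $W_p$ by the estimates of Section~\ref{section 2}. The non-continuity of $x\mapsto t^0(x)$ only affects which window a particle enters in, not the argument.

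Now choose $\sigma:=\sigma_1$, where $\sigma_1$ is produced by applying the gathering construction to $\mu^1$ with drift $-v$: this gives $T_1'>0$, a Carath\'eodory control $\hat u$, and an AC measure $\sigma_1$ with $\sigma_1(B)>1-\varepsilon$ such that the solution of $\partial_t\nu+\nabla\cdot((-v+\mathds{1}_\omega\hat u)\nu)=0$ with $\nu(0)=\mu^1$ reaches $\nu(T_1')=\sigma_1$. Reversing time, $\mu(t):=\nu(T_1'-t)$ solves \eqref{eq:transport} with the Carath\'eodory control $\tilde u(x,t):=-\hat u(x,T_1'-t)$, steering $\sigma_1$ to $\mu^1$ exactly in time $T_1'$. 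Concatenating the two controls (translated in time) yields a single Carath\'eodory control on $[0,T]$ with $T:=T_0'+T_1'$; the discrepancy $O(\varepsilon^{1/p})$ between $\mu(T_0')$ and $\sigma_1$ propagates through the release block into a final error bounded by $e^{LT_1'}C\varepsilon^{1/p}$ via the stability estimate $W_p(\Phi^w_t\#\mu,\Phi^w_t\#\nu)\le e^{L|t|}W_p(\mu,\nu)$ recalled in Section~\ref{section 2}. Letting $\varepsilon\to0$ gives approximate controllability from $\mu^0$ to $\mu^1$ at this time $T$. The main obstacle is the gathering-and-arranging step: constructing a genuinely admissible (bounded, Lipschitz in space, measurable in time) control that catches mass at the discontinuous-in-$x$ entry times and funnels it into $B$ without ever letting trajectories cross — equivalently, proving internal controllability inside the open connected set $\omega$ between AC measures — everything else being soft arguments about $W_p$ and time-reversal.
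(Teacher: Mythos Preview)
The statement itself is cited from \cite{DMR17} and not proved in the present paper; the paper's own contribution is the sharper Theorem~\ref{th opt}(i), whose proof of course implies this existence result. That proof proceeds by \emph{discretization}: one partitions $\mu^0,\mu^1$ into cells of equal small mass, replaces each cell by a Dirac at a point of its support to get empirical measures $\hat\mu^0_n,\hat\mu^1_n$, invokes the microscopic controllability (Propositions~\ref{prop: dim finie} and~\ref{prop:up to mass discret}) to steer the Diracs along explicit non-crossing trajectories inside a slightly shrunk $\omega_n$, and then fattens each trajectory into a thin tube carrying the corresponding cell of $\mu^0$, kept contracted by a linear feedback around the guiding path.

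Your two-block gather/release scheme with a Moser--Dacorogna rearrangement inside $\omega$ and a time-reversed second block is a genuinely different route. For the bare existence statement it is a legitimate strategy, and conceptually cleaner in that it avoids all the discrete combinatorics (permutations, non-crossing arguments). What the paper's discretization buys, and your argument does not, is the quantitative bound $T>S(\mu^0,\mu^1)$; conversely your approach does not need the extra hypothesis~\eqref{eq:cond omega} on $\omega$. Two points to tighten. First, make $T=T_0'+T_1'$ independent of $\varepsilon$ before sending $\varepsilon\to 0$: this is harmless once you observe, by compactness of $\supp(\mu^l)$ and openness of $\omega$, that $\sup_{x\in\supp(\mu^0)}t^0(x)<\infty$ and similarly for $t^1$, so $T_0,T_1$ (and the fixed-length rearrangement interval) can be chosen once and for all. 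Second, the gathering step you flag is indeed the only substantive gap; the difficulty is that $\mathds{1}_\omega u$ must vanish on $\partial\omega$ to be globally Lipschitz, so you cannot grab mass at the instant it enters. A concrete fix is a fixed radial contraction field toward the center of $B$, smoothly cut off near $\partial\omega$ and strong enough to dominate $v$ on an interior collar, run for a uniform time after $T_0$; the Dacorogna--Moser step is then applied only on a final short interval between two AC measures already supported well inside $\omega$. The paper's tube construction sidesteps this entirely by never gathering: each cell travels independently along its own precomputed path.
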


\begin{remark}\label{rmq:exact} For exact controllability, the picture is completely different. The Carath\'eodo\-ry condition implies that the flow $\Phi^{v+\mathds{1}_{\omega}u}_t$ is an homeomorphism. Since general $\mu^0,\mu^1$ are not homeomorphic,  one needs to search a control with less regularity. The drawback is that one loses the uniqueness of the solution to System \eqref{eq:transport}.  This is the meaning of our recent result Theorem  \ref{th cont exact} below, proving that a class of controls ensuring exact controllability exists, but uniqueness is lost. 
\end{remark}

\begin{theorem}[see \cite{DMR17}]\label{th cont exact}
Let $\mu^0,\mu^1\in\mc{P}_c^{ac}(\mb{R}^d)$ 
satisfying the Geometric Condition \ref{cond1}. 
Then, there exists $T>0$ such that System \eqref{eq:transport} is \textbf{exactly controllable} from $\mu^0$ to $\mu^1$ at time $T$
in the following sense: there exists a couple $(\mathds{1}_{\omega}u,\mu)$ composed of  
a Borel vector field $\mathds{1}_{\omega}u:\mb{R}^d\times\mb{R}^+\rightarrow\mb{R}^d$ 
and  a time-evolving measure $\mu$
being  weak  solution to  System \eqref{eq:transport}  and satisfying  
 $\mu(T)=\mu^1$.
\end{theorem}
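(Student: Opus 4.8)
The plan is to route all the mass through a single ``hub'' configuration: a Dirac mass $\delta_{x^*}$ located at a fixed point $x^*\in\omega$. First I would build a control that concentrates $\mu^0$ onto $\delta_{x^*}$ (forward in time), then a control that spreads $\delta_{x^*}$ back out onto $\mu^1$; concatenating the two controls and the two measure evolutions at the hub gives the desired couple $(\mathds 1_\omega u,\mu)$ at some time $T>0$. The second control is obtained for free from the first: spreading $\delta_{x^*}$ onto $\mu^1$ is exactly the time-reversal of concentrating $\mu^1$ onto $\delta_{x^*}$ for the vector field $-v$, and, since $\Phi^{-v}_t=\Phi^v_{-t}$, item (ii) of the Geometric Condition \ref{cond1} for $\mu^1$ relative to $v$ is precisely item (i) for $\mu^1$ relative to $-v$. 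Note that non-uniqueness of $\mu$ (cf. Remark \ref{rmq:exact}) is built into the construction, because concentrating an AC measure onto a point forces a non-Lipschitz velocity field and hence genuinely non-unique characteristics at $x^*$.

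\textbf{Step 1 (uniform entrance time and a finite partition).} Fix $x^*\in\omega$. By Geometric Condition \ref{cond1}(i), for each $x^0\in\supp(\mu^0)$ there is $t_{x^0}>0$ with $\Phi^v_{t_{x^0}}(x^0)\in\omega$; since $(x,t)\mapsto\Phi^v_t(x)$ is continuous and $\omega$ is open, there is an open neighbourhood $V_{x^0}$ with $\Phi^v_{t_{x^0}}(\overline{V_{x^0}})\subset\omega$. Compactness of $\supp(\mu^0)$ yields a finite subcover $V_{x^0_1},\dots,V_{x^0_N}$; set $\tau_j:=t_{x^0_j}$ and $T_0':=\max_j\tau_j$. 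Disjointifying, $\mu^0=\sum_{j=1}^N\mu^0|_{A_j}$ with $A_j$ Borel, $\overline{A_j}\subset\overline{V_{x^0_j}}$ and $\Phi^v_{\tau_j}(\overline{A_j})$ a compact subset of $\omega$. Thus after the finite time $T_0'$ every piece of $\mu^0$ has passed through $\omega$.

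\textbf{Step 2 (forward concentration).} Choose $T_0>T_0'$. For $x^0\in A_j$ design a trajectory $\gamma_{x^0}\in C([0,T_0];\mb R^d)$ that equals $\Phi^v_\cdot(x^0)$ on $[0,\tau_j]$ and, on $[\tau_j,T_0]$, travels inside $\omega$ from $\Phi^v_{\tau_j}(x^0)$ to $x^*$, arriving exactly at time $T_0$ (using that the open connected set $\omega$ is path-connected, the compact pieces $\Phi^v_{\tau_j}(\overline{A_j})$ can be steered into $x^*$ through $\omega$ with bounded speed). The paths on $(\tau_j,T_0)$ can be chosen in pairwise disjoint tubes shrinking to $x^*$ so that, together with injectivity of $\Phi^v_t$, the points $\{\gamma_{x^0}(t):x^0\in\supp(\mu^0)\}$ are pairwise distinct for every fixed $t<T_0$. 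Hence $\gamma_{x^0}(t)\mapsto\dot\gamma_{x^0}(t)$ is a well-defined, bounded, time-dependent field on the occupied set, which I extend to a bounded Borel field $w_1(\cdot,t)$ on $\mb R^d$ equal to $v$ off $\omega$, i.e. $w_1=v+\mathds 1_\omega u_1$. Setting $\mu_1(t):=\big(x^0\mapsto\gamma_{x^0}(t)\big)\#\mu^0$, the curves $\gamma_{x^0}$ solve $\dot\gamma=w_1(\gamma,t)$, so $\mu_1$ is a weak solution of \eqref{eq:transport} with control $\mathds 1_\omega u_1$, $\mu_1(0)=\mu^0$ and, by construction, $\mu_1(T_0)=\delta_{x^*}$.

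\textbf{Step 3 (backward spreading) and concatenation.} Apply Steps 1--2 verbatim to the data $(-v,\omega,\mu^1)$ — legitimate since $-v$ satisfies the same regularity bounds and, as noted above, the Geometric Condition holds — to obtain $T_1>0$, a Borel control $\mathds 1_\omega\tilde u$, and a weak solution $\rho$ of $\partial_t\rho+\nabla\cdot((-v+\mathds 1_\omega\tilde u)\rho)=0$ with $\rho(0)=\mu^1$ and $\rho(T_1)=\delta_{x^*}$. Time-reversal: $\hat\rho(t):=\rho(T_1-t)$ is a weak solution of $\partial_t\hat\rho+\nabla\cdot((v+\mathds 1_\omega u_2)\hat\rho)=0$ with $u_2(x,t):=-\tilde u(x,T_1-t)$, still supported in $\omega$, $\hat\rho(0)=\delta_{x^*}$ and $\hat\rho(T_1)=\mu^1$. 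Finally set $T:=T_0+T_1$, let $\mathds 1_\omega u$ be $\mathds 1_\omega u_1(\cdot,t)$ on $[0,T_0]$ and $\mathds 1_\omega u_2(\cdot,t-T_0)$ on $(T_0,T]$, and let $\mu$ be $\mu_1$ on $[0,T_0]$ followed by $\hat\rho(\cdot-T_0)$ on $[T_0,T]$. Both halves take the value $\delta_{x^*}$ at $t=T_0$, so $\mu\in C^0([0,T];\mc P_c(\mb R^d))$ and, the continuity equation being local in time, $\mu$ is a weak solution on $[0,T]$ with $\mu(0)=\mu^0$ and $\mu(T)=\mu^1$. This is the required couple.

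\textbf{Main obstacle.} The delicate point is Step 2: producing a bona fide Borel velocity field whose weak solution hits the Dirac $\delta_{x^*}$ \emph{exactly}. This rests on (a) the uniform entrance-time bound of Step 1 (compactness of $\supp(\mu^0)$ plus openness of $\omega$), (b) routing the finitely many compact pieces into $x^*$ through the connected set $\omega$ with bounded speed while keeping distinct trajectories at distinct points for all $t<T_0$, which is exactly what allows one to read off a well-defined bounded Borel field and conclude that $\mu_1(t)=(\gamma_\cdot(t))\#\mu^0$ solves the continuity equation, and (c) accepting that the coalescence at the single terminal time $T_0$ destroys uniqueness of the characteristics, and hence of $\mu$ — in agreement with Remark \ref{rmq:exact}.
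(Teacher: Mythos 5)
First, a point of reference: this paper does not prove Theorem \ref{th cont exact} at all --- it is recalled verbatim from \cite{DMR17}, so there is no ``paper's own proof'' here to match; the closest in-paper analogue of the required construction is Step 3 of the proof of Theorem \ref{th opt}, which never concentrates mass to a point but instead matches cells of $\mu^0$ to cells of $\mu^1$ of equal mass and keeps them in disjoint tubes of positive size. Your hub-through-a-Dirac architecture is therefore a genuinely different route, and its outer layers are sound: the time-reversal reduction of the spreading phase to a concentration phase for $-v$ is correct (item (ii) of Condition \ref{cond1} for $v$ is item (i) for $-v$ since $\Phi^{-v}_t=\Phi^v_{-t}$), the gluing of two weak solutions at $\delta_{x^*}$ is unproblematic, and the superposition argument showing that $(\gamma_\cdot(t))\#\mu^0$ solves the continuity equation once $\dot\gamma_{x^0}(t)=w_1(\gamma_{x^0}(t),t)$ a.e.\ is standard.

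The genuine gap is in Step 2, and it is exactly the point your ``pairwise disjoint tubes'' sentence glosses over: the coexistence, inside $\omega$, of particles still in their free phase with particles already being steered. Because your grab times $\tau_j$ are \emph{not} first entrance times (they come from a finite cover with a common time per neighbourhood), a particle $x^0\in A_j$ may have $\Phi^v_t(x^0)\in\omega$ for many $t<\tau_j$; at such times its velocity is forced to be $v$ at its location, while grabbed particles parked elsewhere in $\omega$ need a different velocity. The single field $w_1(\cdot,t)$ can accommodate both only if the free-phase occupied set and the controlled-phase occupied set are (essentially) disjoint for a.e.\ $t$ --- and both sets have positive Lebesgue measure, the first one being entirely dictated by $v$ and $\mu^0$, not by you. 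With $v$ constant and $\mu^0$ a long slab upstream of $\omega$, the free-phase mass sweeps through essentially all of $\omega$ over an interval of times, and nothing in your argument shows the grabbed mass can be choreographed around it. The standard repair is to grab each particle at its \emph{first} entrance time $t^0(x)$ from \eqref{e-temps}, so that free-phase particles are by definition never in $\omega$; but $t^0$ is discontinuous in $x$ (as the paper itself emphasizes), which destroys your finite partition with common grab times and forces a genuinely continuum construction --- one must then prove Borel measurability of $(x^0,t)\mapsto\gamma_{x^0}(t)$, injectivity of $x^0\mapsto\gamma_{x^0}(t)$ for every $t<T_0$ across a continuum of distinct grab times, and confinement of every steered path in $\omega$ with uniformly bounded speed. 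That continuum construction is precisely the technical core of the proof in \cite{DMR17} (and of Section \ref{sec:optimal time cont} here in its approximate form, where an entire five-case analysis and a feedback gain $C_n$ are needed just to keep finitely many cells disjoint), so as written your proposal assumes the hard part rather than proving it.
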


Using Proposition \ref{prop Wp}, the approximate controllability of System \eqref{eq:transport}
can be rewritten in terms of the Wasserstein distance:
\begin{proposition}
The macroscopic system \eqref{eq:transport} is  approximately controllable
 fr\-om $\mu^0$ to $\mu^1$ at time $T$ if 
for each $\varepsilon>0$ there exists a control $\mathds{1}_{\omega}u$  satisfying the Carath\'eodory condition such that the corresponding 
solution $\mu$ to System \eqref{eq:transport} satisfies
\begin{equation}\label{estim Wp approx bis}
W_p(\mu^1,\mu(T))\leqslant \varepsilon.
\end{equation}
\end{proposition}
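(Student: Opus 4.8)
The plan is to read this Proposition as a mere reformulation of the definition of approximate controllability, the bridge being Proposition~\ref{prop Wp}, which asserts that on $\mc{P}_c(\mb{R}^d)$ the distance $W_p$ (for $p\in[1,\infty)$) induces the weak topology of measures.

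First I would record a uniform support bound. Pick $R_0>0$ with $\supp(\mu^0)\cup\supp(\mu^1)\subset B(0,R_0)$. For any control $\mathds{1}_\omega u$ satisfying Condition~\ref{cond:cata}, the velocity $v+\mathds{1}_\omega u$ is bounded by $2M$ in sup norm, hence its flow displaces each point by at most $2MT$ on $[0,T]$; since the method of characteristics gives $\mu(t)=\Phi_t^{v+\mathds{1}_\omega u}\#\mu^0$, every solution of \eqref{eq:transport} obeys $\supp(\mu(T))\subset K:=\overline{B(0,R_0+2MT)}$. Thus all the measures that occur below, namely $\mu^1$ and the various $\mu(T)$, are probability measures carried by the fixed compact set $K$, on which weak convergence and $W_p$ convergence coincide by Proposition~\ref{prop Wp}.

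Then, for the implication stated in the Proposition, assume that for each $\varepsilon>0$ there is a Carath\'eodory control whose solution $\mu_\varepsilon$ obeys $W_p(\mu^1,\mu_\varepsilon(T))\leqslant\varepsilon$. Choosing $\varepsilon=1/k$ yields a sequence of controls $\{\mathds{1}_\omega u_k\}_{k}$ with $W_p(\mu^1,\mu_k(T))\to 0$; by Proposition~\ref{prop Wp} the corresponding solutions satisfy that $\mu_k(T)$ converges weakly to $\mu^1$, which is exactly the definition of approximate controllability from $\mu^0$ to $\mu^1$ at time $T$. The converse direction, implicit since the statement is a rewriting, is symmetric: starting from a sequence $\{\mathds{1}_\omega u_k\}$ with $\mu_k(T)$ converging weakly to $\mu^1$, Proposition~\ref{prop Wp} gives $W_p(\mu^1,\mu_k(T))\to 0$, so for every $\varepsilon>0$ one such control $\mathds{1}_\omega u_k$ already satisfies $W_p(\mu^1,\mu_k(T))\leqslant\varepsilon$.

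No serious obstacle arises; this is a soft subsequence argument. The one point requiring care is the legitimacy of invoking Proposition~\ref{prop Wp}: on the whole of $\mc{P}_c(\mb{R}^d)$ weak convergence does not control $W_p$ when mass escapes to infinity, and this pathology is ruled out here precisely by the uniform support bound obtained above from the boundedness assumption in Condition~\ref{cond:cata}(iii).
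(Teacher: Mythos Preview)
Your proof is correct and follows exactly the route the paper takes: the paper introduces this Proposition with the single sentence ``Using Proposition~\ref{prop Wp}, the approximate controllability of System~\eqref{eq:transport} can be rewritten in terms of the Wasserstein distance'' and gives no further argument. Your write-up simply fleshes out that one-line appeal to Proposition~\ref{prop Wp}; the uniform support bound you extract from Condition~\ref{cond:cata}(iii) is a welcome piece of care, since the equivalence of weak and $W_p$ convergence on $\mc{P}_c(\mb{R}^d)$ genuinely requires such tightness, and the paper leaves this implicit.
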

\begin{remark}
Properties \eqref{ine wasser 4} imply that
all the Wasserstein distances $W_p$ are equi\-valent for 
measures compactly supported and $p\in[1,\infty)$, see \cite{V03}.
Thus, we can replace \eqref{estim Wp approx bis}  by 
$$W_1(\mu^1,\mu(T))\leqslant \varepsilon.$$
\end{remark}
Thus, in this work, we study  approximate controllability by considering the distance $W_1$ only.
We will use the distances $W_2$ and $W_{\infty}$, in some other specific cases only.
\section{Infimum time for microscopic models}\label{sec:opt time finite dim}
In this section, we prove Theorem \ref{th:discret exact} and \ref{th:discret approx}, \textit{i.e.} the infimum time for the approximate and exact controllability of microscopic models. 
We first obtain the following result:

\begin{proposition}\label{prop: dim finie}
Let $X^0:=\{x^0_1,...,x^0_n\}\subset\mb{R}^d$ and $X^1:=\{x^1_1,...,x^1_n\}\subset\mb{R}^d$ be two disjoint configurations 
(see Definition \ref{def:disjoint}).
Assume that $\omega$ is a bounded open connected set, the  empirical measures associated to $X^0$ and $X^1$ (see \eqref{eq:emp})
 satisfy the Geometric Condition \ref{cond1} and 
  the velocities $v$ and $\mathds{1}_{\omega}u
 $ satisfy the Carath\'eodory Condition \ref{cond:cata}.
Consider the sequences $\{t_i^0\}_{i}$ and $\{t_i^1\}_{i}$ given in \eqref{def:t^l_i}. 
Then 
\begin{equation}\label{minimal time}
\widetilde{M}_e(X^0,X^1):=\min_{\sigma\in \mathbb{S}_n}\max_{i\in\{1,...,n\}}|t^0_i+t_{\sigma (i)}^1|
\end{equation}
is the infimum time $T_e(X^0,X^1)$ to  exactly control System \eqref{eq ODE} in the sense of Theorem \ref{th:discret exact}.
\end{proposition}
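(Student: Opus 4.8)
\emph{Proof outline.} For the upper bound (item (i)) it is essential that the Carath\'eodory Condition~\ref{cond:cata} bounds $\mathds{1}_\omega u$ merely by \emph{some} constant, so that no speed limit is imposed a priori. Fix a permutation $\sigma\in\mathbb{S}_n$ attaining the minimum in \eqref{minimal time} — taken, when $d=1$, as the non-crossing rearranged matching appearing in Theorem~\ref{th:discret exact} — and a time $T>\widetilde M_e(X^0,X^1)$, so that $t^0_i+t^1_{\sigma(i)}\leqslant\widetilde M_e<T$ and each interval $I_i:=(t^0_i,\,T-t^1_{\sigma(i)})$ is nonempty. For each $i$ I prescribe a curve $y_i$ with $y_i(0)=x^0_i$ and $y_i(T)=x^1_{\sigma(i)}$ that: follows $\Phi^v_\cdot(x^0_i)$ on $[0,t^0_i]$, ending at $p_i:=\Phi^v_{t^0_i}(x^0_i)\in\overline\omega$; follows the $v$-flow issued from $q_i:=\Phi^v_{-t^1_{\sigma(i)}}(x^1_{\sigma(i)})$ on $[\,T-t^1_{\sigma(i)},T\,]$, which by definition of $t^1_{\sigma(i)}$ stays outside $\omega$ and reaches $x^1_{\sigma(i)}$ at time $T$; and on $\overline{I_i}$ is driven, at some velocity $v+u_i$, along a path from $p_i$ to $q_i$ whose interior lies in the open connected set $\omega$ — feasible over the window $I_i$, however short, precisely because no speed ceiling is imposed. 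One arranges these finitely many curves to be pairwise uniformly separated on $[0,T]$ (outside $\omega$ by Gr\"onwall applied to the $v$-flow; inside $\omega$ by routing them apart, using the available room when $d\geqslant 2$ and the non-crossing property of $\sigma$ when $d=1$). Setting $\mathds{1}_\omega u(\cdot,t):=\dot y_i(t)-v(y_i(t))$ on a small ball around each controlled agent and interpolating by bump functions yields a field that is Lipschitz in $x$, measurable in $t$ and bounded, and by uniqueness for \eqref{eq ODE} its solutions are exactly the $y_i$; thus $X^0$ is steered exactly onto $X^1$ at time $T$.

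\emph{Optimality (items (ii)--(iii)).} Suppose \eqref{eq ODE} is exactly controllable from $X^0$ to $X^1$ at time $T$. By \cite[Th.~2.1.1]{BP07} the flow $\Psi:=\Phi^{v+\mathds{1}_\omega u}_T$ is a homeomorphism of $\mathbb{R}^d$; since $\Psi(X^0)=X^1$ and both configurations are disjoint, $\Psi$ induces a permutation $\sigma$ with $\Psi(x^0_i)=x^1_{\sigma(i)}$. Put $y_i(t):=\Phi^{v+\mathds{1}_\omega u}_t(x^0_i)$. Where $y_i\notin\omega$ it solves $\dot y_i=v(y_i)$, so $y_i$ agrees with $\Phi^v_\cdot(x^0_i)$ until its first visit to the open set $\omega$, which therefore occurs exactly at $t^0_i$. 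Assume first $T>M^*_e(X^0,X^1)$, so that $t^0_i,t^1_{\sigma(i)}<T$ for all $i$. If the control never acts on $y_i$, then $\Phi^v_T(x^0_i)=x^1_{\sigma(i)}$; as the backward characteristic of $x^1_{\sigma(i)}$ reaches $\omega$ after backward time $t^1_{\sigma(i)}<T$, there are $t_n<T-t^1_{\sigma(i)}$ with $t_n\to T-t^1_{\sigma(i)}$ and $\Phi^v_{t_n}(x^0_i)\in\omega$, and since $t^0_i$ is the first such time this forces $T-t^1_{\sigma(i)}>t^0_i$. If instead the control does act on $y_i$, then $y_i$ spends a set of positive measure in the open set $\omega$, so the last time $\tau_i$ with $y_i(\tau_i)\in\omega$ satisfies $\tau_i>t^0_i$; moreover $y_i(\tau_i)=\Phi^v_{-(T-\tau_i)}(x^1_{\sigma(i)})\in\overline\omega$ and $y_i\notin\omega$ on $(\tau_i,T]$, and — the crucial point — a trajectory confined to the open set $\omega$ cannot leave it through a boundary point at which the $v$-flow is tangent to $\partial\omega$, which upgrades the bound $T-\tau_i\geqslant\overline t^1_{\sigma(i)}$ to $T-\tau_i\geqslant t^1_{\sigma(i)}$; hence $t^0_i<\tau_i\leqslant T-t^1_{\sigma(i)}$. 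In both cases $T>t^0_i+t^1_{\sigma(i)}$ for every $i$, so $T>\max_i(t^0_i+t^1_{\sigma(i)})\geqslant\widetilde M_e(X^0,X^1)$; since moreover $\widetilde M_e\geqslant M^*_e$ (because $\widetilde M_e\geqslant\max_i t^0_i$ and $\widetilde M_e\geqslant\max_j t^1_j$), no $T\in(M^*_e,\widetilde M_e]$ admits exact controllability, which is item (ii). For $T\leqslant M^*_e$, the same analysis shows that exact controllability requires, for each $i$, either $\Phi^v_T(x^0_i)=x^1_{\sigma(i)}$ or $T\geqslant t^0_i+t^1_{\sigma(i)}$; a transversality argument on the bounded interval $[0,M^*_e]$ then shows this can occur for only finitely many $T$, which is item (iii) (non-vacuous, see Remark~\ref{rmq:T2*}).

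\emph{Main difficulty.} The heart of the argument is the inequality $T-\tau_i\geqslant t^1_{\sigma(i)}$ for a genuinely controlled agent: one must show that, to land \emph{exactly} on $x^1_{\sigma(i)}$, the agent must leave $\omega$ through a transversal crossing of $\partial\omega$ lying on the backward characteristic of $x^1_{\sigma(i)}$, thereby excluding the grazing exits available in the approximate problem. This is precisely the $t^1$-versus-$\overline t^1$ dichotomy that distinguishes Theorem~\ref{th:discret exact} from Theorem~\ref{th:discret approx}, as illustrated by Figure~\ref{fig:ex CE CA}; the only other point requiring genuine care is the simultaneous realisation of the prescribed trajectories by a single admissible vector field in the construction, notably the non-crossing routing inside $\omega$ when $d=1$.
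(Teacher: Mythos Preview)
Your argument for item (ii) contains a genuine gap at precisely the point you flag as ``the heart of the argument.'' You try to upgrade $T-\tau_i\geqslant\overline t^1_{\sigma(i)}$ to $T-\tau_i\geqslant t^1_{\sigma(i)}$ by asserting that a trajectory cannot exit the open set $\omega$ through a boundary point where $v$ is tangent to $\partial\omega$. This claim is neither proved nor true: inside $\omega$ the trajectory follows $v+u$, not $v$, and the Lipschitz condition on $\mathds{1}_\omega u$ only forces $u\to 0$ at the boundary, which does not preclude a tangent exit. The paper bypasses this geometric issue entirely with a much simpler device: by definition of $t^1_{\sigma(i)}$ the \emph{uncontrolled} backward curve $z(t):=\Phi^v_{t-T}(x^1_{\sigma(i)})$ stays outside $\omega$ for all $t\in(T-t^1_{\sigma(i)},T]$, hence $z$ also solves the \emph{controlled} ODE on that interval (the control term vanishes there); by backward uniqueness of Carath\'eodory solutions, $x_i\equiv z$ on $(T-t^1_{\sigma(i)},T]$, so $x_i(t)\notin\omega$ there. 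Combined with the symmetric fact on $(0,t^0_i)$ and the existence of some $\tau_i\in(0,T)$ with $x_i(\tau_i)\in\omega$, one gets $t^0_i<\tau_i<T-t^1_{\sigma(i)}$ directly --- no analysis of the exit geometry is needed.

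Your construction for item (i) also needs tightening. First, $p_i=\Phi^v_{t^0_i}(x^0_i)$ and $q_i=\Phi^v_{-t^1_{\sigma(i)}}(x^1_{\sigma(i)})$ lie only in $\overline\omega$; the paper resolves this by taking $s^0_i\in(t^0_i,t^0_i+\delta/3)$ with $\Phi^v_{s^0_i}(x^0_i)\in\omega$, and similarly for the target. Second, your ``route them apart'' step is not justified: even in $d\geqslant 2$, the curves live in space--time $\mathbb{R}^{d+1}$ and one must rule out crossings \emph{at equal times}. The paper handles this by minimising a space--time cost $K_{ij}$ over bistochastic matrices; Krein--Milman yields a permutation minimiser, and optimality of the permutation forces the corresponding straight (or geodesic, in the non-convex case) space--time paths to be pairwise non-intersecting, by the standard triangle-inequality swap argument. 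Your appeal in $d=1$ to the rearranged matching of Theorem~\ref{th:discret exact} is circular, since that theorem is derived from the present proposition. Finally, item (iii) requires more than a transversality hand-wave: the paper argues that the forced equation $\Phi^v_t(x^0_m)=x^1_i$ with $t^0_m=M^*_e$ can have only finitely many solutions on $[0,M^*_e]$ by ruling out accumulation via a periodicity/equilibrium argument.
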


\begin{proof} We first prove the result corresponding to {\bf Item (i)} of Theorem \ref{th:discret exact}. Let $T:=\widetilde{M}_e(X^0,X^1)+\delta$ with $\delta>0$. Using the Geometric Condition \ref{cond1}, for all $i\in\{1,...,n\}$, there exist $s_i^0\in (t_i^0,t_i^0+\delta/3)$ and $s_i^1\in (t_i^1,t_i^1+\delta/3)$
such that 
\begin{equation*}y_i^0:=\Phi_{s_i^0}^v(x_i^0)\in\omega\mbox{ and }y_i^1:=\Phi_{-s_i^1}^v(x_i^1)\in\omega.
\end{equation*}
This part of the proof is divided into two steps:
\begin{itemize}
\item[$\bullet$] In Step 1, we build a permutation $\sigma$ and a flow on $\omega$ sending $y_i^0$ 
to  $y_{\sigma(i)}^1$ for all $i\in\{1,...,n\}$ with no intersection of trajectories.
\item[$\bullet$] In Step 2, we define a corresponding control sending $x_i^0$
to $x_{\sigma(i)}^1$ for all $i\in\{1,...,n\}$.
\end{itemize}

\textbf{Item (i), Step 1:}  We first assume that $\omega$ is convex.
The goal is to build  a flow with no intersection of the characteristic. 
For all $i,j\in\{1,...,n\}$, we define the cost\begin{equation}\label{Kij}
K_{ij}(y_i^0,s_i^0,y_j^1,s_j^1):=\left\{\begin{array}{ll}
\|(y_i^0,s_i^0)-(y_j^1,T-s_j^1)\|_{\mb{R}^{d+1}}&\mbox{ if }s_i^0<T-s_j^1,\\
\infty& \mbox{ otherwise.}
\end{array}\right.
\end{equation}
Consider the minimization problem:
\begin{equation}\label{eq:inf}
\inf\limits_{\pi\in\mb{B}_n}\frac{1}{n}\sum_{i,j=1}^n K_{ij}(y_i^0,s_i^0,y_j^1,s_j^1)\pi_{ij},
\end{equation}
where $\mb{B}_n$ is the set of the bistochastic $n\times n$ matrices, \textit{i.e.} 
the matrices $\pi:=(\pi_{ij})_{1\leqslant i,j\leqslant n}$ satisfying, for all $i,j\in\{1,...,n\}$,
$$\sum_{i=1}^n\pi_{ij}=1,~\sum_{j=1}^n\pi_{ij}=1,~\pi_{ij}\geqslant 0.$$
The infimum in \eqref{eq:inf} is finite since $T>\widetilde{M}_e(X^0,X^1)$.
The problem \eqref{eq:inf} is a linear minimization problem on the closed convex set $\mb{B}_n$.
Hence, as a consequence of Krein-Milman's Theorem (see \cite{KM40}), the functional \eqref{eq:inf} admits a minimum
at an extremal point of $\mathbb{B}_n$,
\textit{i.e.} a permutation matrix.

Let $\sigma$ be a permutation, for which the associated matrix minimizes \eqref{eq:inf}. Consider the straight trajectories  $y_i(t)$ steering $y_i^0$ at time $s_i^0$ to $y_{\sigma(i)}^1$ at time $T-s_{\sigma(i)}^1$, that are explicitly defined by
\begin{equation}
y_i(t):=
\frac{T-s_{\sigma(i)}^1-t}{T-s_{\sigma(i)}^1-s_i^0}y_i^0+\frac{t-s_{i}^0}{T-s_{\sigma(i)}^1-s_i^0}y_{\sigma(i)}^1.
\label{e:straight}
\end{equation}
We now prove by contradiction that these trajectories have no intersection:
Assume that there exist $i$ and $j$ such that the associated trajectories $y_i(t)$ and $y_j(t)$ intersect. 
If we associate $y^0_i$ and $y^0_j$ to $y^0_{\sigma(j)}$ and 
$y^0_{\sigma(i)}$ respectively, \textit{i.e.} we consider the permutation 
 $\sigma\circ\mc{T}_{i,j}$,  where $\mc{T}_{i,j}$ is the transposition between the  $i$-th 
and the $j$-th elements,
then the associated  cost \eqref{eq:inf} is strictly smaller than 
the cost associated to $\sigma$. 
Indeed, using some geometric considerations in the space/time set (see Figure \ref{fig:geo}), we obtain
\begin{equation*}
\left\{\begin{array}{ll}
\|(y_i^0,s_i^0)-(y_{\sigma\circ\mathcal{T}_{ij}(i)}^1,T-s_{\sigma\circ\mathcal{T}_{ij}(i)}^1)\|_{\mathbb{R}^{d+1}}
=\|(y_i^0,s_i^0)-(y_{\sigma(j)}^1,T-s_{\sigma(j)}^1)\|_{\mathbb{R}^{d+1}}\\\noalign{\smallskip}
\hspace*{58mm}
<\|(y_i^0,s_i^0)-(y_{\sigma(i)}^1,T-s_{\sigma(i)}^1)\|_{\mathbb{R}^{d+1}},\\\noalign{\smallskip}
\|(y_j^0,s_j^0)-(y_{\sigma\circ\mathcal{T}_{ij}(j)}^1,T-s_{\sigma\circ\mathcal{T}_{ij}(j)}^1)\|_{\mathbb{R}^{d+1}}
=\|(y_j^0,s_j^0)-(y_{\sigma(i)}^1,T-s_{\sigma(i)}^1)\|_{\mathbb{R}^{d+1}}\\\noalign{\smallskip}
\hspace*{58mm}<\|(y_j^0,s_j^0)-(y_{\sigma(j)}^1,T-s_{\sigma(j)}^1)\|_{\mathbb{R}^{d+1}}.
\end{array}\right.
\end{equation*}
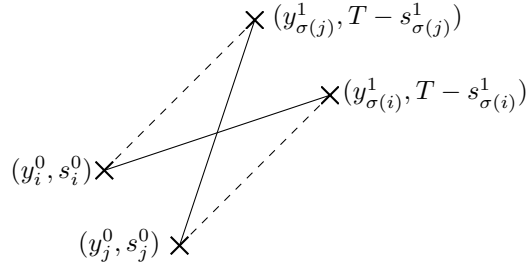
\begin{figure}[ht]
\begin{center}
\begin{tikzpicture}[scale=1]
\node[cross,thick,minimum size=4pt] at (1,0) {};
\node[cross,thick,minimum size=4pt] at (0,1) {};
\node[cross,thick,minimum size=4pt] at (3,2) {};
\node[cross,thick,minimum size=4pt] at (2,3) {};
\draw[-] (0,1) -- (3,2);
\draw[-] (1,0) -- (2,3);
\draw[dashed] (0,1) -- (2,3);
\draw[dashed] (1,0) -- (3,2);
\path (-0.7,1) node {$(y_i^0,s_i^0)$};
\path (0.2,0) node {$(y_j^0,s_j^0)$};
\path (3.5,3) node {$(y_{\sigma(j)}^1,T-s_{\sigma(j)}^1)$};
\path (4.4,2) node {$(y_{\sigma(i)}^1,T-s_{\sigma(i)}^1)$};
\end{tikzpicture}
\caption{An optimal permutation.}\label{fig:geo}
\end{center}\end{figure}
Here, we denote with $\|\cdot\|_{\mathbb{R}^{d+1}}$ the Euclidean norm in $\mathbb{R}^{d+1}$. This is in contradiction with the fact that $\sigma$ minimizes \eqref{eq:inf}.

Assume now that $\omega$ is not convex, but just open connected, hence arc-connected. Then, one replaces the norm $\|(y_i^0,s_i^0)-(y_j^1,T-s_j^1)\|_{\mb{R}^{d+1}}$ in \eqref{Kij} with the distance $d_{\overline{\omega}}(y_i^0,y_j^1)+|T-s_j^1-s_i^0|$, where 
$$d_{\overline{\omega}}(y_i^0,y_j^1)=\inf \limits_{\gamma\in C^1([0,1];\mathbb{R}^d)}\left\{\int_0^1|\dot\gamma(t)|\,dt\ \mbox{~~s.t.~~} \gamma(0)=y_i^0, \ \ \gamma(1)=y_j^1,\ \ \gamma(t)\in\overline{\omega} \, \ \forall \ t\in[0,1]\right\}.$$
For each pair, infimum is finite and it is realized by some curve $\gamma_{ij}$, since $\overline{\omega}$ is compact and  arc-connected. For the global problem \eqref{Kij} , there exist a minimizing permutation $\sigma$  steering $y_i^0$ at time $s_i^0$ to $y_{\sigma(i)}^1$ at time $T-s_{\sigma(i)}^1$. The corresponding curves $\gamma_{i\sigma(i)}$ are  not intersecting, for the same reasoning explained above for convex sets. Then, since $\omega$ is open, one can deform them to find paths in $\omega$ that are arbitrarily close to the $\gamma_{i\sigma(i)}$, still not intersecting and $C^\infty$ functions of time.  We denote by $y_i(t)$ such deformed path steering $y_i^0$ at time $s_i^0$ to $y_{\sigma(i)}^1$ at time $T-s_{\sigma(i)}^1$.

\textbf{Item (i), Step 2: } Consider now the following trajectories $z_i$:
\begin{equation*}
z_i(t):=\left\{\begin{array}{ll}
\Phi_t^v(x_i^0)&\mbox{ for all }t\in(0,s_i^0),\\
y_i(t)&\mbox{ for all }t\in(s_i^0,T-s_{\sigma(i)}^1),\\
\Phi_{t-T}^v(x_i^1)&\mbox{ for all }t\in(T-s_{\sigma(i)}^1,T),
\end{array}\right.
\end{equation*}
where $y_i(t)$ are defined in Item (i), Step 1. The trajectories $z_i$ have no intersection.
By construction of trajectories $y_i(t)$, it holds $z_i(t)\in\omega$ for all $t\in(s_i^0,T-s^1_{\sigma(i)})$.
For all $i\in \{1,...,n\}$, 
choose $r_i,R_i$ satisfying $0<r_i<R_i$ and such that it holds
$$\begin{cases}
B_{r_i}(z_i(t))\subset B_{R_i}(z_i(t))\subset\omega
&\mbox{ for all }t\in(s_i^0,T-s^1_{\sigma(i)})\\
B_{R_i}(z_i(t))\cap B_{R_j}(z_j(t))=\varnothing&\mbox{ for all }t\in(0,T),
\end{cases}$$
with $i,j\in\{1,...,n\}$.
Such radii $r_i,R_i$ exist as a consequence of the fact that we deal with a finite number of trajectories that do not cross.
The corresponding control can be chosen as a function that is piecewise constant with respect to the time variable and $\mc{C}^{\infty}$ with respect to the $x$ variable given by:
\begin{equation*}
u(x,t):=
\begin{cases}
\dot y_i(t)-v & \mbox{ if }t\in (s_i^0,T-s^1_{\sigma(i)}) \mbox{ and }x\in B_{r_i}(z_i(t)),\\
0 & \mbox{ if }t\in (s_i^0,T-s^1_{\sigma(i)}) \mbox{ and }x\not\in B_{R_i}(z_i(t)),\\
\mbox{$C^\infty$-space-spline}& \mbox{ if }t\in (s_i^0,T-s^1_{\sigma(i)}) \mbox{ and }x\in B_{R_i}(z_i(t))\setminus B_{r_i}(z_i(t)),\\
0 & \mbox{ if }t\not\in (s_i^0,T-s^1_{\sigma(i)}),
\end{cases}\\
\end{equation*}
We have  built a control satisfying the Carath\'eodory condition such that each $i$-th component of the associated solution to System \eqref{eq ODE}  is $z_i(t)$, hence steering $x_i^0$ to $x^1_{\sigma(i)}$ in time $T$.

We now prove the result corresponding to {\bf Item (ii)} of Theorem \ref{th:discret exact}. Assume that System \eqref{eq ODE} 
is exactly controllable at a time $T> M^*_e(X^0,X^1)$, and consider $\sigma$ the corresponding permutation defined by $x_{i}(T)=x^1_{\sigma(i)}$. The idea of the proof is that the trajectory steers $x^0_i$ to $\omega$ in time $t^0_i$, then it moves inside $\omega$ for a small but positive time, then it steers a point from $\omega$ to $x^{1}_{\sigma(i)}$ in time $t^1_{\sigma(i)}$, hence $T>t^0_i+t^1_{\sigma(i)}$.

Fix an index $i\in\{1,...,n\}$. First recall the definition of $t^0_i,t^1_{\sigma(i)}$ and observe that it holds both $T>t^0_i$ and $T>t^1_{\sigma(i)}$. Then, the trajectory $x_i(t)$ satisfies\footnote{These estimates hold even in the specific case of $x^0_i\in \omega$, for which it holds $t^0_i=0$.}  $x_i(t)\not\in \omega$ for all $t\in (0,t^0_i)$, as well as $x_i(t)\not\in\omega$ for all $t\in(T-t^1_{\sigma(i)},T)$. Moreover, we prove that it exists $\tau_i\in(0,T)$ for which it holds $x_i(\tau_i)\in\omega$. By contradiction, if such $\tau_i$ does not exist, then the trajectory $x_i(t)$ never crosses the control region, hence it coincides with $\Phi_t^v(x^0_i)$. But in this case, by definition of $t^0_i$ as the infimum of times such that $\Phi_t^v(x^0_i)\in\omega$ and recalling that $t^0_i<T$, there exists $\tau_i\in(t^0_i,T)$ such that it holds $x_i(\tau_i)=\Phi_{\tau_i}^v(x^0_i)\in\omega$. Contradiction. Also observe that $\omega$ is open, hence there exists $\epsilon_i$ such that $x_i(\tau)\in\omega$ for all $\tau\in(\tau_i-\epsilon_i,\tau_i+\epsilon)$.
We merge the conditions $x_i(t)\not\in \omega$ for all $t\in (0,t^0_i)\cup(T-t^1_{\sigma(i)},T)$ with $x_i(\tau)\in\omega$ for all $\tau\in(\tau_i-\epsilon_i,\tau_i+\epsilon_i)$ with a given $\tau_i\in(0,T)$. This implies that it holds $t^0_i<\tau_i<T-t^1_{\sigma(i)}$, hence 
\begin{equation*}
T>t^0_i+t_{\sigma (i)}^1.
\end{equation*}
Such estimate holds for any $i\in\{1,...,n\}$. 
Using the definition of $\widetilde{M}_e(X^0,X^1)$, we deduce that $T>\widetilde{M}_e(X^0,X^1)$.

We finally prove the result corresponding to {\bf Item (iii)} of Theorem \ref{th:discret exact}. By definition of $M_e^*(X^0,X^1)$, 
there exists $l\in\{0,1\}$ and $m\in \{1,...,n\}$ such that $M^*_e(X^0,X^1)=t_m^l$. 
We only study the case $l=0$, since the case $l=1$ can be recovered by reversing time. We consider the trajectory starting from such $x^0_m$ only. By definition of $t_m^0$, the trajectory $\Phi^v_t(x^0_m)$ satisfies $\Phi^v_t(x^0_m)\not\in \omega$ for all $t\in[0,M^*_e(X^0,X^1)]$. Then, for any choice of the control $u$ localized in $\omega$, it holds $\Phi^{v+\mathds{1}_{\omega}u}_{t}(x_m^0)=\Phi^v_t(x^0_m)$, \textit{i.e.} the choice of the control plays no role in the trajectory starting from $x^0_m$ on the time interval $t\in[0,M^*_e(X^0,X^1)]$. Observe that it holds $v(\Phi^v_t(x^0_m))\neq 0$ for all $t\in[0,M_e^*(X^0,X^1)]$, due to the fact that the vector field is time-independent and the trajectory $\Phi^v_t(x^0_m)$ enters $\omega$ for some $t>M_e^*(X^0,X^1)$.

We now prove that the set of times $t\in[0,M_e^*(X^0,X^1)]$ for which exact controllability holds is finite. A necessary condition to have exact controllability at time $t$ is that the equation $\Phi^v_t(x^0_m)=x^1_i$ admits a solution for some time $t\in[0,M_e^*(X^0,X^1)]$ and index $i\in\{1,...,n\}$. Then, we aim to prove that the set of times-indexes $(t,i)$ solving such equation is finite. By contradiction, assume to have an infinite number of solutions $(t,i)$. Since the set $i\in\{1,...,n\}$ is finite, this implies that there exists an index $I$ and an infinite number of (distinct) times $t_k\in[0,M_e^*(X^0,X^1)]$ such that $\Phi^v_{t_k}(x^0_m)=x^1_I$. Since $v$ is an autonomous vector field, this implies that, for each pair $t_{k_1},t_{k_2}$ it holds $\Phi^v_{t_{k_1}-t_{k_2}}(x^1_i)=x^1_i$, hence $\Phi^v_t(x^1_i)$ is a periodic trajectory, with period $|t_{k_1}-t_{k_2}|$. By compactness of $[0,M_e^*(X^0,X^1)]$, there exists a converging subsequence (that we do not relabel) $t_k\to t_*\in[0,M_e^*(X^0,X^1)]$. Then $\Phi^v_t(x^1_i)$ is a periodic trajectory with arbitrarily small period, hence $x^1_i$ is an equilibrium\footnote{Such final statement can be easily proved by contradiction: if $v(x^1_i)\neq 0$, then apply the local rectifiability theorem for Lipschitz vector field \cite{rectify} and prove that any eventually periodic trajectory passing through $x^1_i$ has a strictly positive minimal period.}. This contradicts the fact that for all $t\in[0,M_e^*(X^0,X^1)]$ it holds $v(\Phi^v_t(x^0_m))\neq 0$.
\end{proof}

\begin{remark}
Proposition \ref{prop: dim finie} can be interpreted as follows:
Each particle at point $x^0_i$ needs to be sent on a target point $x_{\sigma(i)}^1$. The time $\widetilde{M}_e(X^0,X^1)$ coincides with the infimum time for a particle at $x^0_i$ 
 to enter in $\omega$ and then go from $\omega$ to $x^1_{\sigma(i)}$.
We are thus assuming that the particle  travels with an arbitrarily large speed in $\omega$.
\end{remark}
Formula \eqref{minimal time} leads to the proof of Theorem \ref{th:discret exact}.

\noindent {\it Proof of Theorem \ref{th:discret exact}.} 
Let $\sigma_0$ be a minimizing permutation in  \eqref{minimal time}.
We build recursively a sequence of permutations $\{\sigma_1,...,\sigma_n\}$ as follows:
\begin{itemize}
\item[$\bullet$]Let $k_1$ be such that $t_{\sigma_0(k_1)}^1$ is a maximum of $\{t_{\sigma_0(1)}^1,...,t_{\sigma_0(n)}^1\}$.
We denote by $\sigma_1:=\sigma_0\circ\mc{T}_{1,k_1}$,  where, for all $i,j\in\{1,...,n\}$,
$\mc{T}_{i,j}$ is the transposition between the  $i$-th 
and the $j$-th elements. 
It holds 
\begin{equation*}
\left\{\begin{array}{l}
t^0_{k_1}+t^1_{\sigma_0(k_1)}
\geqslant t_1^0+t^1_{\sigma_0(1)},\\\noalign{\smallskip}
t^0_{k_1}+t^1_{\sigma_0(k_1)}
\geqslant t_1^0+t^1_{\sigma_0({k_1})}
=t_1^0+t^1_{\sigma_0\circ\mathcal{T}_{1,k_1}({1})}
=t_1^0+t^1_{\sigma_1({1})},\\\noalign{\smallskip}
t^0_{k_1}+t^1_{\sigma_0(k_1)}
\geqslant t_{k_1}^0+t^1_{\sigma_0({1})}
=t_{k_1}^0+t^1_{\sigma_0\circ\mathcal{T}_{1,k_1}({k_1})}
=t_{k_1}^0+t^1_{\sigma_1({k_1})}.
\end{array}\right.
\end{equation*}
Thus $\sigma_1$ minimizes  \eqref{minimal time} too, since it holds
\begin{equation*}
\max_{i\in\{1,...,n\}}\{t_i^0+t_{\sigma_0(i)}^1\}
\geqslant \max_{i\in\{1,...,n\}}\{t_i^0+t_{\sigma_1(i)}^1\}.
\end{equation*}
\item[$\bullet$]
Assume that $\sigma_{j}$ is built. 
Let $k_{j+1}$ be such that $t_{\sigma_j(k_{j+1})}^1$ is a maximum  of $\{t_{\sigma_{j}(j+1)}^1,...,\allowbreak t_{\sigma_j(n)}^1\}$.
Again, we clearly have 
\begin{equation*}
\left\{\begin{array}{rcl}
t^0_{k_{j+1}}+t^1_{\sigma_j(k_{j+1})}\geqslant t_{j+1}^0+t^1_{\sigma_j(j+1)},\\\noalign{\smallskip}
t^0_{k_{j+1}}+t^1_{\sigma_j(k_{j+1})}
\geqslant t_{j+1}^0+t^1_{\sigma_{j}({k_{j+1}})}
&=&t_{j+1}^0+t^1_{\sigma_{j}\circ\mathcal{T}_{j+1,k_{J+1}}({j+1})}\\\noalign{\smallskip}
&=&t_{j+1}^0+t^1_{\sigma_{j+1}({j+1})},\\\noalign{\smallskip}
t^0_{k_{j+1}}+t^1_{\sigma_j(k_{j+1})}
\geqslant t_{k_{j+1}}^0+t^1_{\sigma_{j}(k_{j})}
&=&t_{k_{j+1}}^0+t^1_{\sigma_{j}\circ\mathcal{T}_{j+1,k_{J+1}}(k_{j+1})}\\\noalign{\smallskip}
&=&t_{k_{j+1}}^0+t^1_{\sigma_{j+1}(k_{j+1})}.
\end{array}\right.
\end{equation*}
Thus $\sigma_{j+1}:=\sigma_j\circ\mc{T}_{j+1,k_{j+1}}$ 
minimizes  \eqref{minimal time} too:
\begin{equation*}
\max_{i\in\{1,...,n\}}\{t_i^0+t_{\sigma_{j}(i)}^1\}
\geqslant \max_{i\in\{1,...,n\}}\{t_i^0+t_{\sigma_{j+1}(i)}^1\}.
\end{equation*}
\end{itemize}
The sequence $\{t^1_{\sigma_n(1)},...,t^1_{\sigma_n(n)}\}$ is then decreasing and $\sigma_n$ is a minimizing permutation in \eqref{minimal time}.
We deduce that $\widetilde{M}_e(X^0,X^1)=M_e(X^0,X^1)$.
\hfill\qed

With Theorem \ref{th:discret exact}, we give an explicit and simple expression of the infimum time for microscopic models. This result is useful in numerical simulations of Section \ref{sec:num sim} (in particular, see Algorithm \ref{algo 1}).
Let $\mu^0$ and $\mu^1$ be the probability  measures defined by
\begin{equation}\label{def mu dim finie}
\mu^0:=\sum_{i=1}^n\frac{1}{n}\delta_{x_i^0}\mbox{ and }\mu^1:=\sum_{i=1}^n\frac{1}{n}\delta_{x_i^1},
\end{equation}
where the points $x^0_i$ (resp. $x^1_i$) are disjoint. 
We now deduce that $T_e(X^0,X^1)$  is equal to $S(\mu^0,\mu^1)$ given in \eqref{def T0}, when $\mu^0$ and $\mu^1$ are given by \eqref{def mu dim finie}.

\begin{corollary} \label{Coro:micro_macro}
Let $X^0:=\{x^0_1,...,x^0_n\}\subset\mb{R}^d$ and $X^1:=\{x^1_1,...,x^1_n\}\subset\mb{R}^d$ be two disjoint configurations.
Consider $\mu^0$ and $\mu^1$ the corresponding  measures given in \eqref{def mu dim finie}.
Assume that $\omega$ is is a bounded open connected set, $\mu^0$, $\mu^1$
 satisfy the Geometric Condition \ref{cond1} and 
  the velocities $v$ and $\mathds{1}_{\omega}u
 $ satisfy the Carath\'eodory Condition \ref{cond:cata}.
 
Then the infimum time $T_e(X^0,X^1)$  is equal to $S(\mu^0,\mu^1)$ given in \eqref{def T0}.
\end{corollary}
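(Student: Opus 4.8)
The plan is to show the two quantities coincide by proving each is bounded by the other, using the characterization of $T_e(X^0,X^1)$ from Theorem~\ref{th:discret exact} together with the explicit form of the empirical measures $\mu^0,\mu^1$. The key observation is that for measures of the form \eqref{def mu dim finie}, the functions $\mc{F}_{\mu^0}$ and $\mc{B}_{\mu^1}$ are piecewise-constant step functions: $\mc{F}_{\mu^0}(t) = \frac{1}{n}\#\{i : t^0_i \leqslant t\}$ and $\mc{B}_{\mu^1}(t) = \frac{1}{n}\#\{i : t^1_i \leqslant t\}$. Hence their generalized inverses satisfy, at the values $m = k/n$, $\mc{F}_{\mu^0}^{-1}(k/n) = t^0_{(k)}$ and $\mc{B}_{\mu^1}^{-1}(k/n) = t^1_{(k)}$, where $t^0_{(1)} \leqslant \cdots \leqslant t^0_{(n)}$ denotes the non-decreasing rearrangement of $\{t^0_i\}$ and $t^1_{(1)} \leqslant \cdots \leqslant t^1_{(n)}$ the non-decreasing rearrangement of $\{t^1_i\}$.

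First I would reduce the supremum in \eqref{def T0} to a maximum over finitely many points. Since $\mc{F}_{\mu^0}^{-1}$ and $\mc{B}_{\mu^1}^{-1}(1-\cdot)$ are both lower semi-continuous step functions that jump only at the values $k/n$, the function $m \mapsto \mc{F}_{\mu^0}^{-1}(m) + \mc{B}_{\mu^1}^{-1}(1-m)$ is constant on each interval where neither jumps, so its supremum is attained and equals $\max_{k \in \{0,\ldots,n\}}\{\mc{F}_{\mu^0}^{-1}(k/n) + \mc{B}_{\mu^1}^{-1}(1-k/n)\}$; one has to be slightly careful about which one-sided limit to take at each jump, but lower semicontinuity of $\mc{F}_{\mu^0}^{-1}$ and the $\inf$ in the definition of $\mc{B}_{\mu^1}^{-1}$ make this routine. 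Translating via the identities above, $S(\mu^0,\mu^1) = \max_{k\in\{1,\ldots,n\}}\{t^0_{(k)} + t^1_{(n-k+1)}\}$ (with the boundary terms $k=0,n$ handled by the convention that out-of-range indices contribute $0$, which cannot exceed the interior maximum since all $t^l_i \geqslant 0$).

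Next I would identify this with $M_e(X^0,X^1)$ from \eqref{OT disc CE}. By Theorem~\ref{th:discret exact}, $T_e(X^0,X^1) = M_e(X^0,X^1) = \max_i |t^0_i + t^1_i|$ where the sequence $\{t^0_i\}$ is sorted non-decreasingly and $\{t^1_i\}$ non-increasingly; i.e. $M_e(X^0,X^1) = \max_k (t^0_{(k)} + t^1_{(n-k+1)})$, which is exactly the expression obtained for $S(\mu^0,\mu^1)$ in the previous step. (All quantities are nonnegative so the absolute value is harmless.) Alternatively, and perhaps more transparently, I would invoke Proposition~\ref{prop: dim finie}: $T_e(X^0,X^1) = \min_{\sigma \in \mathbb{S}_n}\max_i (t^0_i + t^1_{\sigma(i)})$, and then cite the standard rearrangement fact that for two finite sequences the min-max over matchings of pairwise sums is achieved by pairing the largest of one with the smallest of the other — precisely the sorting prescription above — so $\widetilde{M}_e = \max_k(t^0_{(k)} + t^1_{(n-k+1)})$. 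Either route closes the argument.

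The main obstacle, such as it is, is purely bookkeeping: matching the one-sided conventions in the definitions of the generalized inverses $\mc{F}_{\mu^0}^{-1}, \mc{B}_{\mu^1}^{-1}$ (which use $\inf\{t : \mc{F} \geqslant m\}$) against the discrete sorted sums, and checking that replacing the continuous $\sup_{m\in[0,1]}$ by the discrete $\max_{k}$ does not lose or gain value at the jump points — in particular confirming that $\mc{B}_{\mu^1}^{-1}(1-m)$ evaluated with the correct convention at $m = k/n$ gives $t^1_{(n-k+1)}$ and not a neighboring order statistic. Once the dictionary between the step functions and the order statistics is written out carefully, the equality $T_e(X^0,X^1) = S(\mu^0,\mu^1)$ is immediate. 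I expect no genuine analytic difficulty here; the statement is essentially the assertion that the macroscopic formula \eqref{def T0} specializes correctly to the microscopic one \eqref{OT disc CE} on empirical measures.
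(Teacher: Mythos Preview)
Your proposal is correct and follows essentially the same route as the paper: both compute the generalized inverses $\mc{F}_{\mu^0}^{-1}$ and $\mc{B}_{\mu^1}^{-1}$ as step functions taking the sorted values $t^0_i$, $t^1_i$ on the intervals $\left(\frac{i-1}{n},\frac{i}{n}\right]$, and then identify the supremum in \eqref{def T0} with the maximum in \eqref{OT disc CE}. The paper's proof is simply the terse version of your argument, stating the values of the inverses on each interval and invoking the identification directly, without the explicit discussion of jump points and one-sided conventions that you (rightly) flag as the only bookkeeping to check.
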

\begin{proof}
Remark that if the sequences $\{t_i^0\}_{i\in\{1,...n\}}$ and $\{t_i^1\}_{i\in\{1,...n\}}$ are non-decreasingly and non-increasingly  ordered respectively, 
then for all  $m\in \left(\frac{i-1}{n},\frac{i}{n}\right]$ it holds
\begin{equation*}
\mc{F}_{\mu^0}^{-1}(m)=t_i^0
\mbox{ ~~and~~ }
\mc{B}_{\mu^1}^{-1}(1-m)=t_i^1.
\end{equation*}
Then, the result is given by identification of the expression of $M_e(X^0,X^1)$ given in \eqref{OT disc CE} with the expression of $S(\mu^0,\mu^1)$ in \eqref{def T0}.
\end{proof}

We now prove Theorem \ref{th:discret approx}, which characterizes the infimum time for approximate control of System \eqref{eq ODE}.

\noindent{\it Proof of Theorem \ref{th:discret approx}.} We first prove \textbf{Item (i)}. Consider $X^0:=\{x_1^0,...,x_n^0\}$ and  $X^1:=\{x_1^1,...,x_n^1\}$ two disjoint configurations  satisfying 
the Geometric Condition \ref{cond1}.
We first prove that the infimum time $T_a(X^0,X^1)$ is equal to
\begin{equation*}
\widetilde{M}_a(X^0,X^1):=\min_{\sigma\in \mathbb{S}_n}\max_{i\in\{1,...,n\}}|t^0_i+\overline{t}_{\sigma (i)}^1|.
\end{equation*}

First assume that $T>\widetilde{M}_a(X^0,X^1)$. Let $\varepsilon>0$. 
For each $x^1_i$, we need to find points $y_i^1$ satisfying 
\begin{equation}
\|y_i^1-x^1_i\|\leqslant \varepsilon
\mbox{ and }y_i:=\Phi^v_{-\overline{t}_i^1}(y_i^1)\in\omega.\label{e-yi1}
\end{equation}
For each $x^1_i$, observe that the Geometric Condition \ref{cond1} implies that either $x^1_i\in\omega$ or that the trajectory enters $\omega$ backward in time. In the first case, define $y_i^1:=x_i^1$. In the second case, remark that $v(\Phi^v_{-t}(x^1_i))$ is nonzero for a whole interval $t\in[0,\tilde t]$, with $\tilde t>\bar t^1_i$, 
and $\Phi^v_{-\bar t_i^1}(x^1_i)\in\overline{\omega}$, hence the flow $\Phi^v_{-\bar t^1_i}(\cdot)$ is an homeomorphism in a neighborhood of $x^1_i$. Then, there exists $y_i^1\in\mb{R}^d$ such that \eqref{e-yi1} is satisfied.

We denote by $Y^1:=\{y^1_1,...,y_n^1\}$.
For all $i\in\{1,...,n\}$, since $y_i\in\omega$, then $t^1(y_i^1)\leqslant \overline{t}_i^1$, hence 
$\widetilde{M}_e(X^0,Y^1)\leqslant\widetilde{M}_a(X^0,X^1)<T.$
Proposition \ref{prop: dim finie} implies that we can exactly steer $X^0$ to $Y^1$ at time $T$ with a control $u$
satisfying the Carath\'eodory condition. 
Denote by $X(t)$ the solution to System \eqref{eq ODE} for the initial condition $X^0$ and the control $u$.
It holds 
\begin{equation}W_1(X^1,X(T))=W_1(X^1,Y^1)\leqslant \sum_{i=1}^n\frac{1}{n}\|y_i^1-x^1_i\|\leqslant\varepsilon.\label{e-apprW1}
\end{equation}
Choose now $\varepsilon=\frac{1}{k}$ for each $k\in\mathbb{N}^*$, and denote by $u_k$ the corresponding control and with $x_{i,k}(t)$ the associated solution to System \eqref{eq ODE} for the $i$-th particle. By the definition of $W_1$ in the discrete case and \eqref{e-apprW1}, there exists a permutation $\sigma_k\in \mathbb{S}_n$ for which it holds 
$$\left\| x_{i,k}(T)-x^1_{\sigma_k(i),k}\right\|\leq \frac{n}{k},$$
for all $i=1,\ldots, n$. Since the space of permutations $\mathbb{S}_n$ is finite, one can extract a subsequence with $\sigma_k$ constant. With such subsequence, the statement (i) is proved.

We now prove \textbf{Item (ii)}. Consider a time $T>M^*_a(X^0,X^1)$ at which System \eqref{eq ODE} is approximately controllable. We aim to prove that it satisfies $T>\widetilde{M}_a(X^0,X^1)$. 
For each $k\in\mathbb{N}^*$, there exists a control $u_k$
satisfying the Carath\'eodory condition such that the corresponding solution $X_k(t)$ to System \eqref{eq:transport} satisfies
\begin{equation}\label{eq:W1 1/k}
W_1(X^1,X_k(T))\leqslant \frac{1}{k}.
\end{equation}
We denote by $Y^1_k:=\{y_{k,1}^1,...,y_{k,n}^1\}$ the configuration defined by $$y_{k,i}^1:=X_{k,i}(T),$$ where $X_{k,i}$ is the $i$-th component of $X_k$. Since $X^0$ is disjoint and $u_k$ satisfies the Carath\'eodory condition, then $Y^1_k$ is disjoint too.
We now prove that it holds
\begin{equation} \label{T_Me*}
T > M^*_e(X^0, Y^1_k).
\end{equation}
Since  $T>M^*_a(X^0,X^1)$, then \eqref{T_Me*} is equivalent to
$T > t_i^1(y_{k,i}^1)$ for all $i\in\{1,...,n\}.$
By contradiction, assume that there exists $j \in \{1,\dots,n\}$ such that $$t^1(y^1_{k,j}) \geqslant T.$$
We distinguish two cases:
\begin{itemize}
\item[$\bullet$]If $t^1(y^1_{k,j}) > T$, then for any $t \in [0,T]$ it holds $\Phi_{-t}^v(y^1_{k,j}) \not \in \omega$. Thus, the localized control does not act on the trajectory, \textit{i.e.} for each $t\in[0,T]$ it holds
$\Phi_{-t}^v(y^1_{k,j}) = \Phi_{-t}^{v+\mathds{1}_{\omega}u_k}(y^1_{k,j}).$
Since $y_{k,j}^1=\Phi_{T}^{v+\mathds{1}_{\omega}u_k}(x^0_j)=\Phi^v_T(x^0_j)$, then
$\Phi_{t}^{v}(x^0_j)\not\in\omega$
for all $t\in[0,T]$.
This is a contradiction with the fact that  $t^0_j\leqslant M_a^*(X^0,X^1) < T.$ Thus~\eqref{T_Me*} holds.
\item[$\bullet$] If $t^1(y^1_{k,j}) = T$, then for all $t \in [0,T)$ it holds $\Phi_{-t}^v(y^1_{k,j}) \not \in \omega$. Since $\omega$ is open, then it also holds $\Phi_{-T}^v(y^1_{k,j}) \not \in \omega$. We then conclude as in the previous case.
\end{itemize}
Since $Y_k^1=X_k(T)$, then Proposition \ref{prop: dim finie} implies that 
\begin{equation}\label{ine Me Ma}
T>\widetilde{M}_e(X^0,Y^1_k).
\end{equation}
For each control $u_k$, denote by $\sigma_k$ the permutation for which $y^1_{k,i}=\Phi_{T}^{v+\mathds{1}_{\omega}u_k}(x^0_{\sigma_k(i)})$. Up to extract a subsequence, for all $k$ large enough, $\sigma_k$ is equal to a permutation $\sigma$.
Inequality \eqref{eq:W1 1/k} implies that for all $i\in\{1,...,n\}$ it holds
\begin{equation}\label{y_k,i}
y_{k,i}^1\underset{k\rightarrow\infty}{\longrightarrow}x_{\sigma(i)}^1.
\end{equation}
Since $t^1(y_{k,i}^1)\leqslant \widetilde{M}_e(X^0,Y^1_k)<T$, up to a subsequence, for a $s_i\geqslant 0$, it holds
\begin{equation}\label{t_k,i}
t^1(y_{k,i}^1)\underset{k\rightarrow\infty}{\longrightarrow}s_{i}.
\end{equation}
Using \eqref{y_k,i}, \eqref{t_k,i}  and the continuity of the flow, it holds
$$\begin{array}{l}\|\Phi_{-t^1(y_{k,i}^1)}^v(y_{k,i}^1)-\Phi_{-s_i}^v(x_{\sigma(i)}^1)\|\\
\hspace{1cm}\leqslant \|\Phi_{-t^1(y_{k,i}^1)}^v(y_{k,i}^1)-\Phi_{-s_i}^v(y_{k,i}^1)\|+\|\Phi_{-s_i}^v(y_{k,i}^1)-\Phi_{-s_i}^v(x_{\sigma(i)}^1)\|
\underset{k\rightarrow\infty}{\longrightarrow}0.\end{array}$$
The fact that $\Phi_{-t^1(y_{k,i}^1)}^v(y_{k,i}^1)\in\overline{\omega}$ for each $i=1,\ldots,n$ leads to 
$\Phi_{-s_i}^v(x_{\sigma(i)}^1)\in\overline{\omega}.$
Thus
\begin{equation*}\overline{t}^1(x_{\sigma(i)}^1)\leqslant \lim\limits_{k\rightarrow\infty} t^1(y^1_{k,i}).
\end{equation*}
Denoting by $\delta:=(T-\widetilde{M}_e(X^0,X^1))/2$ and 
using \eqref{ine Me Ma}, there exists $K\in\mathbb{N}$ such that for all $k>K$ it holds
\begin{equation*}\begin{array}{rcl}
\widetilde{M}_a(X^0,X^1)&=&\min_{\widetilde{\sigma}\in \mathbb{S}_n}\max_{i\in\{1,...,n\}}|t^0_i+\overline{t}_{\widetilde{\sigma} (i)}^1|\\
&\leqslant&\max_{i\in\{1,...,n\}}|t^0_i+\overline{t}_{\sigma (i)}^1|\\
&\leqslant&\max_{i\in\{1,...,n\}}|t^0_i+t^1(y^1_{k,\sigma(i)})|+\delta\\
&=&\widetilde{M}_e(X^0,Y^1_k)+\delta<T.
\end{array}
\end{equation*}
We finally prove \textbf{Item (iii)} of Theorem~\ref{th:discret approx}. 
Let $T \in (0,M_a^*(X^0,X^1))$ be such that System~\eqref{eq ODE} is approximately controllable. 
For any $\varepsilon>0$, there exists $u_\varepsilon$ such that the associated trajectory to System~\eqref{eq ODE} satisfies
\begin{equation}\label{W1 Ma*}
W_1 (X_{\varepsilon}(T),X^1) < \varepsilon.
\end{equation}
There exists $j \in \{1, \dots,n\}$ s.t. $t^0(x^0_j) = M_a^*(X^0,X^1) > T$ or $\overline{t}^1(x^1_j) = M_a^*(X^0,X^1) > T$. 
We distinguish these two cases:
\begin{itemize}
\item[$\bullet$] Assume that %
$t^0(x^0_j) = M_a^*(X^0,X^1) > T$. 

Define $x_{\varepsilon,j}(t):= \Phi_t^{v+\mathds{1}_{\omega}u_{\varepsilon}}(x_j^0)$. 
 Inequality \eqref{W1 Ma*} implies that there exists $k \in \{1, \dots, n\}$ such that
\begin{equation} \label{proche_cible}
\| x_{\varepsilon,j}(T) - x^1_{k_{\varepsilon}} \| < \varepsilon.
\end{equation}
As $t^0(x^0_j) > T$, the trajectory $ \Phi_t^{v}(x_j^0)$ does not cross the control set $\omega$ for $t\in[0,T)$, hence 
$x_{\varepsilon,j}(T)= \Phi_T^{v+\mathds{1}_{\omega}u_{\varepsilon}}(x_j^0)=\Phi_T^v(x_j^0)$
 does not depend on $\varepsilon$.
Define $R:= \frac{1}{2} \min_{p,q}\|x^1_p\allowbreak - x^1_q \|$, that is strictly positive since $X^1$ is disjoint. 
For each $\epsilon<R$, Estimate~\eqref{proche_cible} gives $k_{\varepsilon}=k$ independent on $\varepsilon$ and
\begin{equation*}%
x_{\varepsilon,j}(T) =  \Phi_t^{v}(x_j^0)=x_k^1.
\end{equation*}
Use now the proof of Item (iii) in Proposition \ref{prop: dim finie} to prove that the equation $\Phi_t^{v}(x_j^0)=x_k^1$ admits a finite number of solutions $(t,k)$ with $t\in[0,t^0(x^0_j)]$ and $k\in \{1, \dots,n\}$.
\item[$\bullet$] Assume now that 
$\overline{t}^1(x^1_j) = M_a^*(X^0,X^1) > T$. 
Again, inequality \eqref{W1 Ma*} implies that there exists $k_{\varepsilon} \in \{1, \dots, n\}$ such that
\begin{equation*}\| x_{\varepsilon,k_{\varepsilon}}(T) - x^1_j \|=\| \Phi_T^{v+\mathds{1}_{\omega}u_{\varepsilon}}(x_{k_{\varepsilon}}^0) - x^1_j \| < \varepsilon.
\end{equation*}
As $\overline{t}^1(x^1_j) > T$, the trajectory $ \Phi_{-t}^{v}(x_j^1)$ does not cross the control set $\overline{\omega}$ for all  $t\in[0,T]$.
Then for all $t\in[0,T]$, there exists $r(t)>0$ such that 
$B_{r(t)}(\Phi_{-t}^{v}(x_j^1))\cap\omega=\varnothing.$
By compactness of the set $\{\Phi_{-t}^{v}(x_j^1):t\in[0,T]\}$, there exist $t_1,...,t_N\in[0,T]$ for which it holds
$$\{\Phi_{-t}^{v}(x_j^1):t\in[0,T]\}\subset\bigcup_{i=1}^NB_{r(t_i)}(\Phi_{-t_i}^{v}(x_j^1))\subset\omega^c.$$
Thus there exists a common $r>0$ such that 
$B_r(\Phi_{-t}^{v}(x_j^1))\cap\omega=\varnothing$
for all $t\in[0,T]$.
For $\varepsilon<r$,  it holds $k_{\varepsilon}=k$ and
$$x_{\varepsilon,k}(T)=\Phi^v_T(x_k^0)=x^1_j.$$
We conclude as in the previous case.
\end{itemize}
Finally, apply the permutation method used in the proof of Theorem \ref{th:discret exact} to prove that it holds
$\widetilde{M}_a(X^0,X^1)=M_a(X^0,X^1).$
\hfill\qed
\begin{remark}\label{rmq:T2*}
We  illustrate Item (iii) of 
 Theorem \ref{th:discret exact}
 by giving an example in which System \eqref{eq ODE} is never exactly controllable on 
 $[0,M_e^*(X^0,X^1))$  and another where System \eqref{eq ODE} is exactly controllable
 at a time $T\in[0,M_e^*(X^0,X^1))$.\\
  {\it Figure \ref{fig:ex (0,T*)} (left):}
Consider $\omega:=(-1,1)\times(-1.5,1.5)$, $v:=(1,0)$, $X^0:=(-2,0)$ and $X^1:=(2,0)$.  The time $M_e^*(X^0,X^1)$ 
at which we can act on the particles and the minimal time $M_e(X^0,X^1)$ 
are respectively equal to $1$ and $2$.
We observe that System \eqref{eq ODE} is neither exactly controllable nor approximately controllable
on the interval $[0,M_e^*(X^0,X^1))$.\\
{\it Figure \ref{fig:ex (0,T*)} (right):}
Consider $\omega:=(-2,0)\times(-1.5,1.5)$, the vector field $v(x,y)=(-y,x)$ and $X^0$, $X^1$ as follows
$$
X^0:=\{(\sqrt{2}/2,-\sqrt{2}/2)\},\qquad
X^1:=\{(\sqrt{2}/2,\sqrt{2}/2)\}.$$ \\
The time $M_e^*(X^0,X^1)$ 
at which we can act on the particles and the minimal time $M_e(X^0,X^1)$ 
are respectively equal to $3\pi/4$ and $\pi$.
We remark that 
System \eqref{eq ODE} is exactly controllable, then approximately controllable 
at time $T=\pi/2\in [0,M_e^*(X^0,X^1))$.

\begin{figure}[ht]
\begin{center}
\hfill
\begin{tikzpicture}[scale=1]
\fill[pattern=dots,opacity = 0.5] (-2,-1.5) -- (0,-1.5) -- (0,1.5) -- (-2,1.5) -- cycle;
\draw (-2,-1.5) -- (0,-1.5) -- (0,1.5) -- (-2,1.5) -- cycle;
\node[cross,thick,minimum size=4pt] at (-3,0) {};
\node[cross,thick,minimum size=4pt] at (1,0) {};
\path (-3,0.3) node {$x^0_1$};
\path (1,0.3) node {$x^1_1$};
\path (-1,-1) node {$\omega$};
\path (-2.5,-0.3) node {$v$};
\draw (-3,0) -- (1,0);
\draw (-2.4,0) -- (-2.5,0.1);
\draw (-2.4,0) -- (-2.5,-0.1);
\draw (0.5,1) -- (1.5,1);
\draw (0.5,1.1) -- (0.5,0.9);
\draw (1.5,1.1) -- (1.5,0.9);
\path (1,1.25) node {\scriptsize 1};
\end{tikzpicture}
\hfill
\begin{tikzpicture}[scale=1]
\fill[pattern=dots,opacity = 0.5] (-2,-1.5) -- (0,-1.5) -- (0,1.5) -- (-2,1.5) -- cycle;
\draw (-2,-1.5) -- (0,-1.5) -- (0,1.5) -- (-2,1.5) -- cycle;
\node[cross,thick,minimum size=4pt] at (0.7071,0.7071) {};
\node[cross,thick,minimum size=4pt] at (0.7071,-0.7071) {};
\path (0.8,1) node {$x^1_1$};
\path (0.8,-1) node {$x^0_1$};
\path (-1,-1) node {$\omega$};
\path (1.3,0) node {$v$};
\draw (1,0) -- (0.9,-0.1);
\draw (1,0) -- (1.1,-0.1);
\draw (0,0) circle (1);
\end{tikzpicture}\hfill~
\caption{Examples in the case $T\in(0,M_e^*(X^0,X^1))$.}\label{fig:ex (0,T*)}
\end{center}\end{figure}
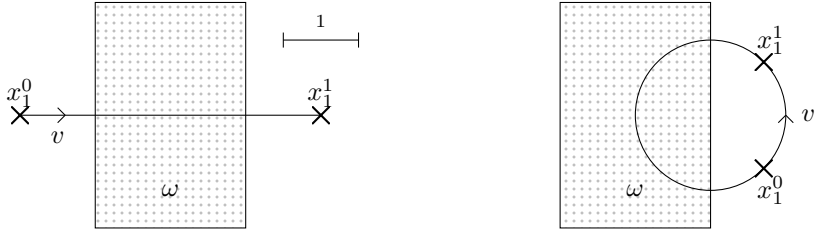

\end{remark}

\section{Infimum time for macroscopic models}\label{sec:optimal time cont}

This section is devoted to the proof of main Theorem \ref{th opt} about infimum time for AC measures. 
We first introduce the notion of infimum time up to small mass in Section \ref{sec:up mass}. We then give some comparisons between the microscopic and macroscopic cases in Section \ref{sec: comp}. We finally use the obtained results to prove Theorem \ref{th opt} in Section \ref{sec:proof th 3}.
\subsection{Infimum time in the microscopic setting up to small masses}\label{sec:up mass}

In this section, we introduce the notion of infimum time up to small mass
and prove some results about this notion in the microscopic case.

From now on, we denote by $\lfloor x \rfloor$ the floor of $x\in\mathbb{R}$.
\begin{definition}[\textbf{Infimum time up to small mass}] Consider $\mu^0,\mu^1$ be two AC-measures compactly supported with the same total mass $\mu^0(\mb{R}^d)=\mu^1(\mb{R}^d)=\gamma>0$. 
Recall that $S^*(\mu^0,\mu^1)$ is defined by~\eqref{e:Sstar}.
We denote by $T_{e,\varepsilon}(\mu^0,\mu^1)$ the infimum time to exactly steer $\mu^0$ to $\mu^1$ up to a mass $\varepsilon$. More precisely, it is
 the infimum of times $T\geqslant S^*(\mu^0,\mu^1)$ for which
 there exists a control 
$\mathds{1}_{\omega}u:\mb{R}^d\times\mb{R}^+\rightarrow\mb{R}^d$ satisfying the Carath\'eodory condition
and two sets $R^0,R^1$ of $\mb{R}^d$ such that $\mu^0(R^0)=\mu^1(R^1)=\varepsilon$ and 
$$\Phi_T^{v+\mathds{1}_{\omega}u}\#\mu^0_{|(R^0)^c}=\mu^1_{|(R^1)^c}.$$
We similarly define the minimal time $T_{a,\varepsilon}(\mu^0,\mu^1)$ to approximately steer $\mu^0$ to $\mu^1$  up to a mass $\varepsilon$.

Let $\gamma>0$ and  $X^0:=\{x^0_1,...,x^0_n\}\subset\mb{R}^d$, $X^1:=\{x^1_1,...,x^1_n\}\subset\mb{R}^d$ be two  disjoint configurations (see Definition \ref{def:disjoint}).
We denote by $T_{e,\varepsilon}(X^0,X^1,\gamma)$ the infimum time  to exactly steer $X^0$ to $X^1$ up to a mass $\varepsilon$ for a total mass $\gamma$. 
More precisely, it is the infimum of times $T\geqslant M^*_e(X^0,X^1)$ such that there exists a control 
$\mathds{1}_{\omega}u:\mb{R}^d\times\mb{R}^+\rightarrow\mb{R}^d$ satisfying the Carath\'eodory condition and two permutations 
$\sigma_0,\sigma_1\in \mathbb{S}_n$ with
$R:= \left\lfloor \frac{n\varepsilon}{\gamma} \right\rfloor$ such that
$$x_{\sigma_0(i)}(T)=x^1_{\sigma_1(i)},$$
for all $i\in\{1,...,n-R\}$.
We similarly define $T_{a,\varepsilon}(X^0,X^1,\gamma)$ the minimal time  to approximately steer $X^0$ to $X^1$  up to a mass $\varepsilon$ for a total mass $\gamma$.
\end{definition}
\begin{remark}
If $\varepsilon<\gamma/n$, then the infimum time up to a small mass clearly coincides with the standard infimum time studied in Section \ref{sec:opt time finite dim}.
\end{remark}
\begin{remark}
Let $\gamma>0$ and $X^0:=\{x^0_1,...,x^0_n\}\subset\mb{R}^d$, $X^1:=\{x^1_1,...,x^1_n\}\subset\mb{R}^d$ be two  disjoint configurations (see Definition \ref{def:disjoint}).
We remark that 
$$T_{e,\varepsilon}(X^0,X^1,\gamma)=T_{e,\varepsilon}(\mu^0,\mu^1)$$
for the  measures $\mu^0$ and $\mu^1$ given by
\begin{equation}\label{def mu dim finie m}
\mu^0:=\sum_{i=1}^n\frac{\gamma}{n}\delta_{x_i^0}\mbox{ and }\mu^1:=\sum_{i=1}^n\frac{\gamma}{n}\delta_{x_i^1}.
\end{equation}
\end{remark}
For all pair of measures $\mu^0$ and $\mu^1$ compactly supported with the same total mass $\gamma:=\mu^0(\mb{R}^d)=\mu^1(\mb{R}^d)$,
we define the quantity
\begin{equation} \label{e:Seps}
S_{\varepsilon}(\mu^0,\mu^1)
:=\sup\limits_{m\in[0,\gamma-\varepsilon]}\{\mc{F}^{-1}_{\mu^0}(m)+\mc{B}^{-1}_{\mu^1}(\gamma-\varepsilon-m)\}.
\end{equation}
We have the following result:

\begin{proposition}\label{prop:up to mass discret}
Let $\gamma>0$ and $X^0:=\{x^0_1,...,x^0_n\},X^1:=\{x^1_1,...,x^1_n\}\subset\mb{R}^d$ be two disjoint configurations.
Assume that $\omega$ is a bounded open connected set, the  empirical measures associated to $X^0$ and $X^1$ (see \eqref{eq:emp})
 satisfy the Geometric Condition \ref{cond1} and 
  the velocities $v$ and $\mathds{1}_{\omega}u
 $ satisfy the Carath\'eodory Condition \ref{cond:cata}.
  Assume that  the sequences $\{t_i^0\}_{i\in\{1,...n\}}$ and $\{t_i^1\}_{i\in\{1,...n\}}$ 
are respectively non-decreasingly and non-increasingly ordered. 
The infimum time $T_{e,\varepsilon}(X^0,X^1,\gamma)$ to exactly steer  $\mu^0$ to $\mu^1$  up to a mass $\varepsilon$ for a total mass $\gamma$
 is equal to
\begin{equation*}
M_{e,\varepsilon}(X^0,X^1,\gamma):=\max\limits_{1\leq i\leq n-R} \{t^0(x^0_i)+t^1(x^1_{i+R})\},
\end{equation*}
where $R:=\lfloor n\varepsilon \gamma \rfloor$. Moreover, it is also equal to
\begin{equation} \label{e:Seps bis}
S_{\varepsilon}(X^0,X^1,\gamma):=S_{\varepsilon}(\mu^0,\mu^1),
\end{equation}
where the measures $\mu^0$ and $\mu^1$ are defined in \eqref{def mu dim finie m}.
\end{proposition}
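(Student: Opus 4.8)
The plan is to reduce the statement about infimum time up to small mass directly to the exact-controllability result for microscopic models, namely Proposition~\ref{prop: dim finie} (equivalently Theorem~\ref{th:discret exact}). The key observation is that steering $X^0$ to $X^1$ up to a mass $\varepsilon$ amounts to choosing $R=\lfloor n\varepsilon/\gamma\rfloor$ agents that we are allowed to \emph{drop} on each side, and then exactly controlling the remaining $n-R$ agents. So first I would fix the two permutations $\sigma_0,\sigma_1\in\mathbb{S}_n$ as the selection of which initial and final agents we keep, and then, on the kept sub-configurations, apply Proposition~\ref{prop: dim finie}: the infimum time to exactly steer the kept $X^0$-agents to the kept $X^1$-agents (paired optimally) is $\min_{\sigma}\max_i |t^0(x^0_{\sigma_0(i)})+t^1(x^1_{\sigma_1(\sigma(i))})|$. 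Optimizing over $\sigma_0,\sigma_1$ as well, the infimum time $T_{e,\varepsilon}(X^0,X^1,\gamma)$ equals the minimum over all ways of discarding $R$ source points and $R$ target points, and over all pairings of the rest, of that max.

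The second step is the combinatorial identity: I claim that this minimum equals $M_{e,\varepsilon}(X^0,X^1,\gamma)=\max_{1\le i\le n-R}\{t^0(x^0_i)+t^1(x^1_{i+R})\}$, when $\{t^0_i\}$ is non-decreasingly ordered and $\{t^1_i\}$ is non-increasingly ordered. The argument here mirrors the ``permutation method'' already used in the proof of Theorem~\ref{th:discret exact}: starting from any optimal discarding-plus-pairing, successive transpositions show that (a) it is never worse to discard the $R$ \emph{largest} $t^0$-values among the kept set, i.e.\ to keep $x^0_1,\dots,x^0_{n-R}$, and symmetrically to keep the $R$ smallest $t^1$-values, i.e.\ $x^1_{R+1},\dots,x^1_n$; and (b) once the kept sets are $\{x^0_1,\dots,x^0_{n-R}\}$ and $\{x^1_{R+1},\dots,x^1_n\}$, the optimal pairing is the monotone one, pairing $x^0_i$ with $x^1_{i+R}$, exactly as in Theorem~\ref{th:discret exact}. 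This gives $T_{e,\varepsilon}(X^0,X^1,\gamma)=M_{e,\varepsilon}(X^0,X^1,\gamma)$.

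Finally, for the identification with $S_\varepsilon(\mu^0,\mu^1)$: using the same computation as in Corollary~\ref{Coro:micro_macro}, for the atomic measures $\mu^0,\mu^1$ in \eqref{def mu dim finie m} one has $\mc{F}^{-1}_{\mu^0}(m)=\gamma t^0_i/\ldots$—more precisely, for $m\in\big(\tfrac{(i-1)\gamma}{n},\tfrac{i\gamma}{n}\big]$ it holds $\mc{F}^{-1}_{\mu^0}(m)=t^0_i$ and $\mc{B}^{-1}_{\mu^1}(\gamma-m)=t^1_i$. Substituting into the definition \eqref{e:Seps} of $S_\varepsilon$ and writing $\gamma-\varepsilon-m$ in the same stratified form, the supremum over $m\in[0,\gamma-\varepsilon]$ becomes $\max_{1\le i\le n-R}\{t^0_i+t^1_{i+R}\}$ (with $R=\lfloor n\varepsilon/\gamma\rfloor$), because the shift by $\varepsilon$ translates into a shift by $R$ indices plus a possible boundary stratum that does not increase the value. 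Hence $S_\varepsilon(X^0,X^1,\gamma)=M_{e,\varepsilon}(X^0,X^1,\gamma)=T_{e,\varepsilon}(X^0,X^1,\gamma)$.

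I expect the main obstacle to be step two, the combinatorial optimality of ``discard the worst, keep the rest, pair monotonically.'' The subtlety is that one is simultaneously choosing \emph{which} points to discard on both sides \emph{and} the pairing, so the exchange argument must handle three types of moves (swap two kept source points' partners, swap a kept source with a discarded one, same on the target side) and check that each weakly decreases the objective $\max_i$. Care is also needed with the floor function and the edge stratum $m\in(\tfrac{(n-R-1)\gamma}{n},\ldots]$ when translating $S_\varepsilon$ into the finite maximum, since $n\varepsilon/\gamma$ need not be an integer; one must verify that the fractional part contributes only a stratum already dominated by the max. Everything else is a direct invocation of Proposition~\ref{prop: dim finie} and the bookkeeping of Corollary~\ref{Coro:micro_macro}.
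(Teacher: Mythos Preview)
Your proposal is correct and follows essentially the same route as the paper. The only cosmetic difference is that the paper, instead of invoking Proposition~\ref{prop: dim finie} as a black box on the kept sub-configurations, re-runs the Krein--Milman argument directly on the larger set $\mathbb{B}_{n,R}$ of sub-bistochastic matrices (those with $\sum_i\pi_{ij}\le 1$, $\sum_j\pi_{ij}\le 1$, $\sum_{ij}\pi_{ij}=n-R$) to obtain non-intersecting trajectories for the $n-R$ kept agents; the extremal points of $\mathbb{B}_{n,R}$ are exactly the ``partial permutation'' matrices encoding a choice of discarded rows/columns plus a pairing of the rest, which is precisely your decomposition. After that, both proofs defer to the same permutation/transposition method of Theorem~\ref{th:discret exact} for the combinatorial identity and to the computation of Corollary~\ref{Coro:micro_macro} for the identification with $S_\varepsilon$.
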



\begin{proof}
We can adapt the proof of Proposition \ref{prop: dim finie} as follows. We first replace the set $\mathbb{B}_n$ of bistochastic matrices in \eqref{eq:inf} by the set $\mathbb{B}_{n,R}$ composed with the matrices satisfying 
\begin{equation*}
\sum_i\pi_{ij}\leqslant 1,~~  \sum_j\pi_{ij}\leqslant 1,
~~  \sum_{ij}\pi_{ij}=n-R\mbox{ and }\pi_{ij}\geqslant0.
\end{equation*}
The set $\mathbb{B}_{n,R}$ is closed and convex. Consider the minimization problem  
\begin{equation}\label{eq:inf2}
\inf\limits_{\pi\in\mb{B}_{n,R}}\frac{1}{n}\sum_{i,j=1}^n K_{ij}(y_i^0,s_i^0,y_j^1,s_j^1)\pi_{ij},
\end{equation}
where the quantities $K_{ij}(y_i^0,s_i^0,y_j^1,s_j^1)$ are defined in \eqref{Kij}.
Also in this case, the infimum in \eqref{eq:inf2} is finite since $T>\widetilde{M}_{e,\varepsilon}(X^0,X^1,\gamma)$.
Problem \eqref{eq:inf2} is  linear, hence, again as a consequence of Krein-Milman's Theorem (see \cite{KM40}), 
some minimisers of this functional are extremal points, 
that are matrices composed of a permutation sub-matrix for some rows and columns, and zeros for other rows and columns.
We then define 
\begin{equation*}\widetilde{M}_{e,\varepsilon}(X^0,X^1,\gamma):=\min_{\sigma_0,\sigma_1\in \mathbb{S}_n}\max_{1\leq i\leq n-R}
|t_{\sigma_0(i)}^0+t_{\sigma_1 (i)}^1|.
\end{equation*}
By applying the permutation method of the proof of Theorem \ref{th:discret exact}, 
we prove that 
$M_{e,\varepsilon}(X^0,\allowbreak X^1,\gamma)$ is equal to $\widetilde{M}_{e,\varepsilon}(X^0,X^1,\gamma)$. 

Following the lines of Corollary~\ref{Coro:micro_macro}, this also implies that formula \eqref{e:Seps bis} holds, by using the definitions of $\mc{F}_{\mu^0}^{-1}$ and $\mc{B}_{\mu^1}^{-1}$.
\end{proof}

\subsection{Comparison of microscopic and macroscopic cases}\label{sec: comp}

The goal of this section is to give some estimates of the functions  
$S^*$ and $S_{\varepsilon}$  defined in \eqref{e:Sstar} and \eqref{e:Seps}, depending both on their arguments $\mu^0,\mu^1$ and the control region.
 They will be instrumental to prove Theorem \ref{th opt} in Section \ref{sec:proof th 3}.
For this reason, we will specify the considered control region 
in the notation $t^0$, $t^1$, $\mc{F}_{\mu^0}$, $\mc{B}_{\mu^1}$, $\mc{F}^{-1}_{\mu^0}$ and  $\mc{B}^{-1}_{\mu^1}$
 in an index, \textit{i.e.}  $t^0_{\omega}$, $t^1_{\omega}$, $\mc{F}_{\mu^0,\omega}$, $\mc{B}_{\mu^1,\omega}$, $\mc{F}^{-1}_{\mu^0,\omega}$ and  $\mc{B}^{-1}_{\mu^1,\omega}$. For the functions $S$, $S_{\varepsilon}$ and $S^*$, the control region will be specified as follows: 
 $S(\mu^0,\mu^1,\omega)$, $S_{\varepsilon}(\mu^0,\mu^1,\omega)$ and $S^*(\mu^0,\mu^1,\omega)$.
\begin{proposition}\label{prop cont T0}
Let $\mu^0,\mu^1\in\mathcal{P}_c^{ac}(\mathbb{R}^d)$.
Assume that $\omega$ is a bounded open connected set, $\mu^0$, $\mu^1$
 satisfy the Geometric Condition \ref{cond1} and 
  the velocities $v$ and $\mathds{1}_{\omega}u
 $ satisfy the Carath\'eodory Condition \ref{cond:cata}.
Fix $\{f_n\}_{n\in\mb{N}}$ be a decreasing sequence of positive numbers converging to zero. For each $n\in\mb{N}$, let $\omega_n$ be defined by
\begin{equation}\label{omega n}
\omega_n:=\{x\in\mb{R}^d:d(x,\omega^c)>f_n\}.
\end{equation} 
Let two sequences $\{\mu^0_n\}_{n\in\mb{N}}$, $\{\mu^1_n\}_{n\in\mb{N}}$
of compactly supported measures be gi\-ven, satisfying the Geometric Condition \ref{cond1}
with respect to $\omega$.
Consider also two sequences of Borel sets $\{R^0_n\}_{n\in\mb{N}}$, $\{R^1_n\}_{n\in\mb{N}}$ of $\mb{R}^d$ 
that satisfy
\begin{equation}\begin{cases}
r_n:=\mu^0(R_n^0)=\mu^1(R_n^1)\underset{n\rightarrow\infty}{\longrightarrow} 0,\\
\mu^0_n(\mb{R}^d)=\mu^1_n(\mb{R}^d)=1-r_n,\\
W_{\infty}(\mu^0_{|(R_n^0)^c},\mu^0_n)<f_n,~
W_{\infty}(\mu^1_{|(R_n^1)^c},\mu^1_n)<f_n.
\label{e-Rn}
\end{cases}
\end{equation}
Let  
$S_{\varepsilon}(\cdot,\cdot,\tilde \omega)$ be the function defined in \eqref{e:Seps} for a given control set $\tilde\omega$.
Then for each $\varepsilon,\delta>0$, there exists $N\in\mb{N}^*$ 
such that for all $n\geq N$, 
it holds
\begin{itemize}
\item[(i)]$S_{2\varepsilon}(\mu^0_n,\mu^1_n,\omega_n)\leqslant S_{\varepsilon}(\mu^0,\mu^1,\omega)+\delta.$
\item[(ii)]$S_{2\varepsilon}(\mu^0,\mu^1,\omega)\leqslant S_{\varepsilon}(\mu^0_n,\mu^1_n,\omega_n)+\delta.$
\end{itemize}
Moreover,  there exist  two sequences of Borel sets $U_{0,n},U_{1,n}\subset\mb{R}^d$
such that \begin{equation*}
\left\{\begin{array}{l}
\mu^0_n(U_{0,n})=\mu^1_n(U_{1,n})\underset{n\rightarrow\infty}{\longrightarrow} 1,\\
\limsup\limits_{n\rightarrow\infty}S^*(\mu^0_{n|U_{0,n}},\mu^1_{n|U_{1,n}},\omega_n)
\leqslant S^*(\mu^0,\mu^1,\omega).
\end{array}\right.\end{equation*}

\end{proposition}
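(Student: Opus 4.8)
\emph{Setup and strategy.} The plan is to reduce everything to the finite-dimensional situation via empirical measures, then invoke Proposition~\ref{prop:up to mass discret} together with the explicit formula for $S_\varepsilon$ as a max over ordered sequences, and finally control all the approximation errors by the three conditions in \eqref{e-Rn} plus uniform continuity of the relevant quantities. The key observation is that $S_\varepsilon(\mu^0,\mu^1,\omega)$ depends on $\mu^0$ and $\mu^1$ only through the ``entry-time distributions'' $\mathcal{F}_{\mu^0,\omega}$ and $\mathcal{B}_{\mu^1,\omega}$, i.e.\ through the push-forward of the measures by the maps $x\mapsto t^0_\omega(x)$ and $x\mapsto t^1_\omega(x)$. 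So the heart of the matter is a \emph{stability statement}: if $\mu_n^0$ is $W_\infty$-close to a restriction of $\mu^0$, and $\omega_n\uparrow\omega$ is a slight interior shrinking, then the entry-time distribution of $\mu_n^0$ into $\omega_n$ is close (in the appropriate one-sided sense) to that of $\mu^0$ into $\omega$.

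\emph{Key steps.} First I would prove the two one-sided comparisons for the entry times themselves. For (i) one needs, roughly, that entering the smaller set $\omega_n$ at time $t^0_{\omega_n}(x)$ takes not much longer than entering $\omega$ at $t^0_\omega(x)$, once $x$ is replaced by a nearby point. Concretely: fix $\delta>0$; by the Geometric Condition and openness of $\omega$, for each $x\in\supp(\mu^0)$ outside a small-mass exceptional set one has $\Phi^v_{s}(x)\in\omega$ for some $s< t^0_\omega(x)+\delta/2$ with $\Phi^v_s(x)$ at positive distance from $\partial\omega$, hence $\Phi^v_s(x)\in\omega_n$ for $n$ large; and the flow $\Phi^v$ is Lipschitz in space uniformly on compacts (Carath\'eodory Condition), so the same $s$ works for all points within distance $f_n$ — in particular for the corresponding mass of $\mu^0_n$ via the $W_\infty$ estimate in \eqref{e-Rn}. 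This gives $t^0_{\omega_n}(\text{supp }\mu^0_n)\le t^0_\omega(x)+\delta$ on all but $\varepsilon$-mass, and symmetrically for the backward times $t^1$. Translating to the generalized inverses: $\mathcal{F}^{-1}_{\mu^0_n,\omega_n}(m)\le \mathcal{F}^{-1}_{\mu^0,\omega}(m+r_n)+\delta$ for the relevant range of $m$, and likewise for $\mathcal{B}^{-1}$; plugging into \eqref{e:Seps} and absorbing the extra $r_n$-mass into the gap between $2\varepsilon$ and $\varepsilon$ (valid for $n$ large since $r_n\to0$) yields (i). For (ii) one argues in the reverse direction: a point of $\supp(\mu^0_n)$ lies within $f_n$ of a point of $\supp(\mu^0_{|(R^0_n)^c})\subset\supp(\mu^0)$, and entering $\omega_n$ is \emph{harder} than entering $\omega$, so $t^0_\omega(x)\le t^0_{\omega_n}(x_n)+\delta$ after adjusting for the spatial perturbation; the same accounting gives (ii).

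\emph{The last assertion (the $U_{i,n}$).} Here I would take $U_{0,n}:=\{x\in\supp(\mu^0_n): t^0_{\omega_n}(x)\le S^*(\mu^0,\mu^1,\omega)+\delta_n\}$ for a sequence $\delta_n\downarrow0$ chosen slowly, and similarly $U_{1,n}$ with $t^1_{\omega_n}$. By definition of $S^*$ in \eqref{e:Sstar}, $t^0_\omega\le S^*(\mu^0,\mu^1,\omega)$ on all of $\supp(\mu^0)$; by the step-(ii)-type estimate above (applied pointwise rather than up to small mass), for all but $r_n$-mass of $\mu^0_n$ one gets $t^0_{\omega_n}\le S^*(\mu^0,\mu^1,\omega)+\delta_n$, hence $\mu^0_n(U_{0,n})\ge 1-r_n-o(1)\to1$; the same for $U_{1,n}$. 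On these sets, $S^*(\mu^0_{n|U_{0,n}},\mu^1_{n|U_{1,n}},\omega_n)=\sup\{t^l_{\omega_n}(x):x\in U_{l,n}\}\le S^*(\mu^0,\mu^1,\omega)+\delta_n$ by construction, so $\limsup_n$ of the left side is $\le S^*(\mu^0,\mu^1,\omega)$.

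\emph{Main obstacle.} The delicate point is the discontinuity of $t^0,t^1$ with respect to $x$ and with respect to the control set — emphasized in the text around Figure~\ref{fig:ex CE CA}. Shrinking $\omega$ to $\omega_n$ can cause $t^0_{\omega_n}$ to jump up drastically for tangential trajectories, and perturbing the point can do the same. The resolution is that such jumps only cost extra time for an \emph{arbitrarily small set of mass} (since $\mu^0,\mu^1$ are AC and the ``bad'' set of tangency points is $\mu$-null, or at least of small measure after passing to a slightly smaller time threshold), which is precisely why the statement is phrased ``up to small mass'' with $2\varepsilon$ versus $\varepsilon$ and with the exceptional sets $R^0_n,R^1_n$. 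Making this quantitative — i.e.\ showing that for each $\delta$ the set of $x$ where entry into $\omega_n$ is delayed by more than $\delta$ compared to entry into $\omega$ has $\mu^0$-measure tending to $0$ — is the technical core, and it is where the absolute continuity of $\mu^0,\mu^1$ (via dominated convergence applied to the monotone limit $t^0_{\omega_n}\uparrow$ something $\le t^0_\omega$-ish on the bulk) is used.
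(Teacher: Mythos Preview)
Your sketch is essentially correct and follows the same line as the paper: define a ``good'' set of points $x\in\supp(\mu^0)$ where a whole $f_n$-ball is carried into $\omega_n$ by time $t^0_\omega(x)+\delta/2$, show its complement has vanishing $\mu^0$-mass, couple $\mu^0_{|(R_n^0)^c}$ with $\mu^0_n$ via an optimal $W_\infty$-plan to transfer the entry-time bound, deduce the one-sided inequalities $\mathcal F_{\mu^0,\omega}(t)\le \mathcal F_{\mu^0_n,\omega_n}(t+\delta/2)+r_n+r_n^0$ (and the reverse for item (ii)), invert, and absorb the residual masses $r_n,r_n^0,r_n^1$ into the gap between $2\varepsilon$ and $\varepsilon$.

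Two small remarks. First, your opening sentence about ``reducing to finite dimensions and invoking Proposition~\ref{prop:up to mass discret}'' is a red herring: the argument you actually describe is direct and does not pass through empirical measures (that reduction happens later, in the proof of Theorem~\ref{th opt}, where \emph{this} proposition is applied to discrete $\mu^0_n,\mu^1_n$). Second, the ``main obstacle'' resolution does not require absolute continuity of $\mu^0,\mu^1$: the paper shows the exceptional set has vanishing mass by monotone convergence of the increasing good sets $V_n^0$ together with the Geometric Condition (any point in $\bigcap_n (V_n^0)^c\cap\supp(\mu^0)$ would contradict the fact that its forward orbit enters the open set $\omega$). Your choice of $U_{0,n}$ as a sublevel set of $t^0_{\omega_n}$ also works, though the paper simply recycles $V_n^0$.
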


\begin{proof} 
We first prove \textbf{Item (i)}. Fix $\varepsilon,\delta>0$. For each $n\in\mb{N}$, define
\begin{equation*}
\begin{array}{l}
V_n^0:=\{x^0\in\supp(\mu^0):\Phi_t^v(B_{f_n}(x^0))\subset\subset\omega_n
\mbox{~for~a~}t\in[0,t^0_{\omega}(x^0)+\delta/2]\}.
\end{array}
\end{equation*}
We define 
\begin{equation}\label{e-rn0}
r_n^0:=\mu^0((V_n^0)^c).
\end{equation}
Since $\{f_n\}_{n\in\mathbb{N}^*}$ is decreasing, then  $\{\omega_n\}_{n\in\mathbb{N}^*}$
is a sequence of increasing sets, hence $V_n^0$ is a sequence of increasing sets. Thus $r_n^0$ is decreasing, hence
 $\lim_{n\to\infty}r_n^0$ exists.
We now prove that it holds 
\begin{equation}\label{pr T ep0}
\lim_{n\to\infty}r_n^0=0.
\end{equation}
We prove it by contradiction. Assume that $\lim_{n\to\infty}r_n^0 > 0$. Then it holds $\mu^0(( V_{n}^0)^c)>C>0$ for all $n\in\mb{N}^*$. 
In particular, the limit set $\left(\cap_{n=1}^\infty (V_n^0)^c\right)\cap \supp(\mu^0)$ is non-empty, hence there exists 
 $x^0\in\supp(\mu^0)$ such that
\begin{equation}\label{pr T ep1}
\Phi_t^v(B_{f_{n}}(x^0))\not\subset\omega_{n},
\end{equation}
for all $t\in[0,t^0_{\omega}(x^0)+\delta/2]$ and $n\in\mb{N}^*$. 
Due to the Geometric Condition \ref{cond1}, such time $t^0_{\omega}(x^0)$ is finite. 
Since $\omega$ is open, for a $t^*_{\omega}(x^0)\in(t^0_{\omega}(x^0),t^0_{\omega}(x^0)+\delta/2)$ and a $r(x^0)>0$, it holds
$B_{r(x^0)}(\Phi^v_{t^*_{\omega}(x^0)}(x^0))\subset\subset\omega.$
By continuity of $\Phi_t^v$, there exists $\widehat{r}(x^0)>0$ such that
\begin{equation}\label{pr T ep2}
\Phi^v_{t^*_{\omega}(x^0)}(B_{\widehat{r}(x^0)}(x^0))\subset\subset\omega.
\end{equation}
Since \eqref{pr T ep1} and \eqref{pr T ep2} are in contradiction, for $n$ large enough,
we conclude that \eqref{pr T ep0} holds. We deduce that, for all $x^0\in V_n^0$, it holds
\begin{eqnarray}
\xi^0\in \overline{B_{f_n}(x^0)}&\Rightarrow& \Phi^v_t(\xi_0)\in \omega_n\mbox{~~for some~~}t\in[0,t^0_{\omega}(x^0)+\delta/2]\nonumber\\
&\Rightarrow&  t^0_{\omega_n}(\xi^0)\leqslant t^0_{\omega}(x^0)+\delta/2. \label{lemme gamma1}
\end{eqnarray}
For each $n\in\mb{N}^*$, 
consider an optimal transference plan $\pi_n$ realizing the $\infty$-Wasserstein distance $W_\infty(\mu^0_{|(R_n^0)^c},\mu^0_n)$, see Proposition \ref{prop Wp}. We remark that \eqref{e-Rn} implies that \begin{equation}\label{lemme gamma2}
|x^0-\xi^0|< f_n \mbox{~~for $\pi_n$-almost every $(x^0,\xi^0)\in (R_n^0)^c\cap\supp(\mu^0)\times\supp(\mu^0_n)$}. 
\end{equation}
Thus, combining \eqref{lemme gamma1} and \eqref{lemme gamma2}, it holds
\begin{equation}\label{ine t0}
t^0_{\omega_n}(\xi^0)\leqslant t^0_{\omega}(x^0)+\delta/2
\end{equation}
for $\pi_n$-almost every pair $(x^0,\xi^0)$ with $x^0\in (R_n^0)^c\cap V_n^0$ and $\xi^0\in\supp(\mu^0_n)$. Using \eqref{e-rn0} and \eqref{ine t0}, 
we obtain
\begin{eqnarray*}
&&\mc{F}_{\mu^0,\omega}(t) \leqslant
\mu^0(\{x^0\in  (R_n^0)^c\cap V_n^0:t^0_{\omega}(x^0)\leqslant t\})+\mu^0(R^0_n)+\mu^0((V^0_n)^c)\\
&&=\pi_n(\{(x^0,\xi^0)\in ((R_n^0)^c\cap V_n^0)\times\supp(\mu^0_n):
t^0_{\omega}(x^0)\leqslant t\})+r_n+r_n^0\\
&&\leqslant\pi_n(\{(x^0,\xi^0)\in  ( (R_n^0)^c\cap V_n^0)\times\supp(\mu^0_n): t^0_{\omega_n}(\xi^0)\leqslant t+\delta/2\})+r_n+r_n^0\\
&&\leqslant \mu^0_n(\{\xi^0\in \supp(\mu^0_n):t^0_{\omega_n}(\xi^0)\leqslant t+\delta/2\})+r_n+r_n^0\\
&&= \mc{F}_{\mu^0_n,\omega_n}(t+\delta/2)+r_n+r_n^0,
\end{eqnarray*}
for all $t\in\mb{R}^+$.
Similarly, we have
\begin{equation*}
\mc{B}_{\mu^1,\omega}(t)\leqslant \mc{B}_{\mu^1_n,\omega_n}(t+\delta/2)+r_n+r_n^1,
\end{equation*}
for all $t\in\mb{R}^+$, where $r_n^1$ is defined similarly to \eqref{e-rn0}.
We deduce that the generalized inverse satisfies
\begin{eqnarray*}
\mc{F}^{-1}_{\mu^0_n,\omega_n}(m)&:=&\inf\{t\geqslant 0:\mc{F}_{\mu^0_n,\omega_n}(t)\geqslant m\}
\leqslant\inf\{t\geqslant \delta/2:\mc{F}_{\mu^0,\omega}(t-\delta/2)-r_n^0-r_n\geqslant m\} \\
&=&\inf\{s\geqslant 0:\mc{F}_{\mu^0,\omega}(s)\geqslant m+r_n^0+r_n\}+\delta/2=\mc{F}_{\mu^0,\omega}^{-1}(m+r_n^0+r_n)+\delta/2,
\end{eqnarray*}
for all $m\in(0,1-r_n^0-r_n)$. Similarly, we obtain
\begin{equation*}
\mc{B}^{-1}_{\mu^1_n,\omega_n}(1-r_n-2\varepsilon-m)
\leqslant \mc{B}_{\mu^1,\omega}^{-1}(1+r_n^1-2\varepsilon-m)+\delta/2,
\end{equation*}
for all $m\in(r_n^1-2\varepsilon,1-r_n-2\varepsilon)$.
For $n$ large enough, we have
\begin{eqnarray*}
S_{2\varepsilon}(\mu_n^0,\mu_n^1,\omega_n)&:=&
\sup\limits_{m\in[0,1-r_n-2\varepsilon]}\{\mc{F}^{-1}_{\mu^0_n,\omega_n}(m)+\mc{B}^{-1}_{\mu^1_n,\omega_n}(1-r_n-2\varepsilon-m)\}\\
&\leqslant&\sup\limits_{m\in[0,1-r_n-2\varepsilon]}\{\mc{F}_{\mu^0,\omega}^{-1}(m+r_n^0+r_n)+\mc{B}_{\mu^1,\omega}^{-1}(1+r_n^1-2\varepsilon-m)\}+\delta\\
&\leqslant&\sup\limits_{m\in[r_n^0+r_n,1+r_n^0-2\varepsilon]}\{\mc{F}_{\mu^0,\omega}^{-1}(m)+\mc{B}_{\mu^1,\omega}^{-1}(1+r_n^1+r_n^0+r_n-2\varepsilon-m)\}+\delta.
\end{eqnarray*}
Observe that it holds $0\leq r_n^0+r_n$. For $n$ large enough, it also holds $1 + r^0_n-2\varepsilon\leq 1-\varepsilon$, as well as $r_n^1+r_n^0+r_n\leqslant\varepsilon$. Since 
$\mc{B}_{\mu^1,\omega}^{-1}$ is increasing, this implies
\begin{equation*}
S_{2\varepsilon}(\mu_n^0,\mu_n^1,\omega_n)\leq \sup\limits_{m\in[0,1-\varepsilon]}\{\mc{F}_{\mu^0,\omega}^{-1}(m)+\mc{B}_{\mu^1,\omega}^{-1}(1-\varepsilon-m)\}+\delta=S_\varepsilon(\mu^0,\mu^1,\omega)+\delta
\end{equation*}
for $n$ sufficiently large.

\smallskip

We now prove \textbf{Item (ii)}. For all $n\in\mb{N}$, define 
$$\widetilde{V}_n^0:=\{x^0\in\supp(\mu^0_n):\Phi_t^v(B_{f_n}(x^0))\subset\subset\omega
\mbox{~~~for~a~~}t\leqslant t^0_{\omega_n}(x^0)+\delta/2\}$$
and $\widetilde{r}_n^0:=\mu^0((\widetilde{V}_n^0)^c)$. 
Using the same argument as for item (i), we can prove that
\begin{equation}\label{ine t0 (ii)}
t^0_{\omega}(\xi^0)\leqslant t^0_{\omega_n}(x^0)+\delta/2
\end{equation}
for $\pi_n$-almost every $x^0\in\widetilde{V}_n^0$ and $\xi^0\in\supp(\mu^0)\cap (R_n^0)^c$, where $\pi_n$ is a transportation plan realizing $W_\infty(\mu^0_n,\mu^0)$.
 Inequality \eqref{ine t0 (ii)} implies
\begin{eqnarray*}
&&\mc{F}_{\mu^0_n,\omega_n}(t)\leqslant
\mu^0_n(\{x^0\in \widetilde{V}_n^0:t^0_{\omega_n}(x^0)\leqslant t\})+\mu^0((\widetilde{V}_n^0)^c)\\
&&=\pi_n(\{(x^0,\xi^0)\in   \widetilde{V}_n^0\times(\supp(\mu^0)\cap (R_n^0)^c): t^0_{\omega_n}(x^0)\leqslant t\})+\widetilde{r}_n^0\\
&&\leqslant \pi_n(\{(x^0,\xi^0)\in  \widetilde{V}_n^0\times(\supp(\mu^0)\cap (R_n^0)^c): t^0_{\omega}(\xi^0)\leqslant t+\delta/2\})+\widetilde{r}_n^0\\
&&\leqslant \mu^0(\{\xi^0\in \supp(\mu^0)\cap (R_n^0)^c:t^0_{\omega}(\xi^0)\leqslant t+\delta/2\})+\widetilde{r}_n^0\\
&&\leqslant \mu^0(\{\xi^0\in \supp(\mu^0):t^0_{\omega}(\xi^0)\leqslant t+\delta/2\})+\mu^0(R^0_n)+\widetilde{r}_n^0\\
&&= \mc{F}_{\mu^0,\omega}(t+\delta/2)+r_n+\widetilde{r}_n^0,
\end{eqnarray*}
for  all $t\in\mb{R}^+$.
We also have
\begin{equation*}
\mc{B}_{\mu^1_n,\omega_n}(t)\leqslant \mc{B}_{\mu^1,\omega}(t+\delta/2)+r_n+\widetilde{r}_n^1,
\end{equation*}
for all $t\in\mb{R}^+$, where $\widetilde{r}_n^1$ is similarly defined.
We conclude as before.

\smallskip

The last statement holds for $U_{0,n}:=V^0_n$ and  $U_{1,n}:=V^1_n$, 
where $$V^1_n:=\{x^1\in\supp(\mu^1):\Phi_{-t}^v(B_{f_n}(x^1))\subset\subset\omega_n
\mbox{~for~a~}t\leqslant t^1_{\omega}(x^1)+\delta/2\}.$$
%
\end{proof}

\subsection{Proof of Theorem \ref{th opt}}\label{sec:proof th 3}

In this section, we prove Theorem \ref{th opt}. The proof is based on the results obtained in Section \ref{sec:up mass} and \ref{sec: comp}. We highlight that the idea of the proof is to approximate the macroscopic crowds with microscopic ones with a sufficiently large number of agents, find a control steering one microscopic crowd to another, then adapt such control for the macroscopic one. For this reason, this result can be seen as the limit of Proposition \ref{prop:up to mass discret}.
\begin{proof}[Proof of   Theorem \ref{th opt}]
 We first prove \textbf{Item (i)}.
Fix $s>0$ and $\varepsilon>0$ and define the time
$$T:=S(\mu^0,\mu^1,\omega)+s.$$
To prove the theorem, we aim to show that there exists an admissible control $u$  
such that $$W_1(\mu^1,\Phi_T^{v+\mathds{1}_{\omega}u}\#\mu^0)\leqslant \varepsilon.$$

  We assume that the space dimension is $d:=2$, but the reader will see that the proof can be clearly adapted to any space dimension. The proof is divided into three steps. 

\textbf{Step 1:} In this step, we discretize the measures, in four sub-steps.
We first discretize uniformly in space the supports of $\mu^0$ and $\mu^1$.
To simplify the presentation, assume that  $\supp(\mu^0)\subset(0,1)^2$
 and $\supp(\mu^1)\subset(0,1)^2$. 
For all $k:=(k_1,k_2)\in\{0,...,n-1\}^2$, consider the set $C_{n,k}$ defined by
\begin{equation*}
C_{n,k}:=\left[\frac{k_1}{n},\frac{k_{1}+1}{n}\right]\times\left[\frac{k_2}{n},\frac{k_{2}+1}{n}\right].
\end{equation*}

Then define $\mu^l_{nk}:=\mu^l_{|_{C_{n,k}}}$ for $l=0,1$. Remark that the number of indexes $nk$ is exactly $n^2$.

We now consider only the decomposition of $\mu^0$, while the decomposition of $\mu^1$ is similar. To send a measure  to another, these measures need to have the same total mass. With this goal, we perform the second step of our decomposition: on each set $C_{n,k}$ consider a decomposition $$a_0=\frac{k_1}{n}<a_1<\ldots<a_m\leq \frac{k_1+1}n$$ such that it holds $\mu^0_{nk}([a_i,a_{i+1}]\times [\frac{k_2}{n},\frac{k_{2}+1}{n}])=\frac{1}{n^3}$ for all $i=0,\ldots,m-1$, as well as $\mu_{nk}^0([a_m,\frac{k_1+1}n]\times [\frac{k_2}{n},\frac{k_{2}+1}{n}])<\frac{1}{n^3}$. If $\mu^0(C_{n,k})<\frac{1}{n^3}$, we set $m=0$.
One can always perform such a decomposition, since $\mu^0$ is absolutely continuous.
Then define $C^0_{nki}:=[a_i,a_{i+1}]\times [\frac{k_2}{n},\frac{k_{2}+1}{n}]$ for $i=0,\ldots,m-1$ as well as $\mu^0_{nki}:=(\mu^0_{nk})_{|_{C^0_{nki}}}$. Also define $$\bar \mu^0_{nk}:=(\mu^0_{nk})_{|_{\left[a_m,\frac{k_1+1}n\right]\times \left[\frac{k_2}{n},\frac{k_{2}+1}{n}\right]}}.$$
The meaning of such decomposition is the following: each $\mu^0_{nki}$ has mass $\frac{1}{n^3}$ and is localized in $C^0_{nki}$, while $\bar \mu^0_{nk}$ has a mass strictly smaller than $\frac{1}{n^3}$ and is localized in $C^0_{n,k}$. Remark that the number of indexes $i$ depends on $n,k$. Nevertheless, the total mass not contained in the sets $C^0_{nki}$ is smaller than $\frac1n$, hence the total number of indexes $nki$ is between $n^3-n^2$ and $n^3$.

We now consider the third step of the decomposition: each $\mu^0_{nki}$ is localized in the set $C^0_{nki}:=[a_i,a_{i+1}]\times [\frac{k_2}{n},\frac{k_{2}+1}{n}]$. For each index $nki$, define a decomposition $$a_{i0}=\frac{k_2}n<a_{i1}<\ldots<a_{im}\leq  \frac{k_{2}+1}{n}$$ such that it holds $\mu^0_{nki}([a_i,a_{i+1}]\times [a_{ij},a_{i(j+1)}])=\frac{1}{n^4}$ for each $j=0,\ldots,m-1$. Then define $C_{nkij}^0:=[a_i,a_{i+1}]\times [a_{ij},a_{i(j+1)}]$ for each $j=0,\ldots,m-1$, as well as $\mu^0_{nkij}:=(\mu^0_{nki})_{|_{C^0_{nkij}}}$. Remark that the number of indexes $j$ depends on $nki$, but that the total number of indexes $nkij$ is between $n^4-n^3$ and $n^4$.

If the dimension of the space is larger than 2, one simply needs to decompose the $\mu_{nkij}$ with respect to the subsequent dimension with blocks of mass $\frac{1}{n^5}$, and so on, up to cover all dimensions.

The second and third step of the decomposition are represented in Figure \ref{fig: mesh}.

 For more details on such discretization, we refer to  \cite[Prop. 3.1]{DMR17}.

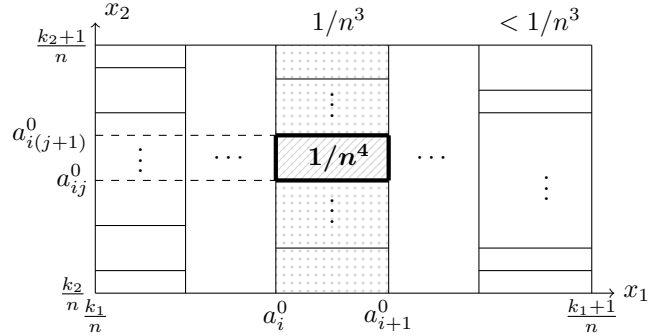
\begin{figure}[ht]
\begin{center}
\begin{tikzpicture}[scale=3]
\draw[->] (0,0) -- (0,1.2);
\draw[->] (0,0) -- (2.3,0);
\draw[-] (0,1.1) -- (2.2,1.1);
\path (0.1,1.25) node {$x_2$};
\path (2.4,0) node {$x_1$};
\draw[-] (0.4,0) -- (0.4,1.1);
\draw[-] (0,0.1) -- (0.4,0.1);
\draw[-] (0,0.3) -- (0.4,0.3);
\path (0.2,0.625) node {$\vdots$};
\draw[-] (0,0.8) -- (0.4,0.8);
\draw[-] (0,1) -- (0.4,1);
\path (0.6,0.6) node {$\cdots$};
\draw[-] (0.8,0) -- (0.8,1.1);
\draw[-] (0.8,0.2) -- (1.3,0.2);
\path (1.05,0.4) node {$\vdots$};
\draw[-,ultra thick] (0.8,0.5) -- (1.3,0.5);
\draw[-,ultra thick] (0.8,0.7) -- (1.3,0.7);
\path (1.05,0.87) node {$\vdots$};
\draw[-,ultra thick] (0.8,0.5) -- (0.8,0.7);
\draw[-,ultra thick] (1.3,0.5) -- (1.3,0.7);
\fill [opacity=0.4,pattern=north east lines] (0.8,0.5) -- (0.8,0.7) -- (1.3,0.7) -- (1.3,0.5);
\draw[dashed] (0,0.5) -- (0.8,0.5);
\draw[dashed] (0,0.7) -- (0.8,0.7);
\fill [opacity=0.4,pattern=dots] (0.8,0) -- (0.8,0.5) -- (1.3,0.5) -- (1.3,0);
\fill [opacity=0.4,pattern=dots] (0.8,0.7) -- (0.8,1.1) -- (1.3,1.1) -- (1.3,0.7);
\path (0.8,-0.1) node {$a_i^0$};
\path (1.3,-0.1) node {$a_{i+1}^0$};
\path (1.08,1.2) node {$1/n^3$};
\path (1.97,1.2) node {$<1/n^3$};
\path (-0.1,0.5) node {$a_{ij}^0$};
\path (-0.2,0.7) node {$a_{i(j+1)}^0$};
\path (1.08,0.6) node {$\boldsymbol{1/n^4}$};
\draw[-] (0.8,.95) -- (1.3,.95);
\draw[-] (1.3,0) -- (1.3,1.1);
\path (1.5,0.6) node {$\cdots$};
\draw[-] (1.7,0) -- (1.7,1.1);
\draw[-] (1.7,0.1) -- (2.2,0.1);
\draw[-] (1.7,0.2) -- (2.2,0.2);
\path (2,0.5) node {$\vdots$};
\draw[-] (1.7,0.8) -- (2.2,0.8);
\draw[-] (1.7,0.9) -- (2.2,0.9);
\draw[-] (2.2,0) -- (2.2,1.1);
\path (0,-0.1) node {$\frac{k_1}{n}$};
\path (2.2,-0.1) node {$\frac{k_1+1}{n}$};
\path (-0.1,0) node {$\frac{k_2}{n}$};
\path (-0.15,1.1) node {$\frac{k_2+1}{n}$};
\end{tikzpicture}
\caption{Example of a partition of $C_{n,k}$  in cells such as $C_{nkij}^0$ (hashed).}
\label{fig: mesh}
\end{center}
\end{figure}

We finally consider the fourth step of our decomposition: in each cell $C_{nkij}^0=[a_i,a_{i+1}]\times [a_{ij},a_{i(j+1)}]$, it is defined an absolutely continuous measure $\mu^0_{nkij}$ with mass $\frac{1}{n^4}$. Thus, it exists a constant $\epsilon>0$ such that the set 
\begin{equation}\label{e-B0}
B^0_{nkij}:=[a_i+\epsilon,a_{i+1}-\epsilon]\times [a_{ij}+\epsilon,a_{i(j+1)}-\epsilon]
\end{equation} satisfies $\mu^0_{nkij}(B^0_{nkij})=\frac{1}{n^4}-\frac{1}{n^6}$. See Figure \ref{fig:cell} for a graphical description of this decomposition. For more details of such  construction, we also refer to \cite[Prop. 3.1]{DMR17}.

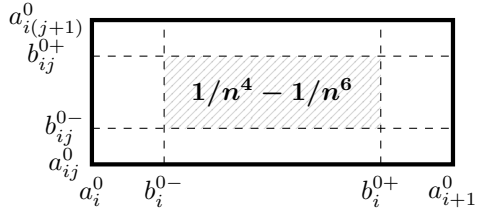
\begin{figure}[ht]
\begin{center}
\begin{tikzpicture}[scale=2.4]
\draw[color=black,ultra thick] (0 ,0) -- (0 ,0.8) -- (2,0.8) -- (2 ,0) -- cycle;
\draw[dashed] (0.4,0) -- (0.4,0.8);
\draw[dashed] (1.6,0) -- (1.6,0.8);
\draw[dashed] (0,0.2) -- (2,0.2);
\draw[dashed] (0,0.6) -- (2,0.6);
\path (0.0,-0.15) node {$a_i^0$};
\path (2,-0.15) node {$a_{i+1}^0$};
\path (-0.15,0) node {$a_{ij}^0$};
\path (-0.25,0.8) node {$a_{i(j+1)}^0$};
\path (0.4,-0.15) node {$b_i^{0-}$};
\path (1.6,-0.15) node {$b_{i}^{0+}$};
\path (-0.15,0.2) node {$b_{ij}^{0-}$};
\path (-0.25,0.6) node {$b_{ij}^{0+}$};
\fill [opacity=0.4,pattern=north east lines] (0.4,0.2) -- (0.4,0.6) -- (1.6,0.6) -- (1.6,0.2);
\path (1,0.4) node {$\boldsymbol{1/n^4-1/n^6}$};
\end{tikzpicture}
\caption{Example of  cells  $B^0_{nkij}$ (hashed).}\label{fig:cell}
\end{center}\end{figure}

We finally define $\tilde \mu^0_{nkij}:=(\mu^0_{nkij})_{|_{B^0_{nkij}}}$ and $\bar \mu^0_{nkij}:=(\mu^0_{nkij})_{|_{C^0_{nkij}\setminus B^0_{nkij}}}$.

\noindent We discretize similarly the measure $\mu^1$ on sets $C^1_{nki}, C^1_{nkij}, B^1_{nkij}$, defining $\bar \mu^1_{nk}$, $\bar \mu^1_{nkij}$, $\tilde \mu^1_{nkij}$.

\textbf{Step 2:}
In this step, we prove exact controllability of microscopic approximations $\hat \mu^0_n, \hat \mu^1_n$ of $\mu^0,\mu^1$. In a first step, we define such sequence of approximations $\hat \mu^0_n, \hat \mu^1_n$ of $\mu^0,\mu^1$ satisfying hypotheses of Proposition \ref{prop cont T0}.

\textbf{Step 2.1:} We aim to define a sequence of microscopic approximations $\hat \mu^0_n, \hat \mu^1_n$ of $\mu^0,\mu^1$, that satisfy hypotheses of Proposition \ref{prop cont T0}. We first observe the following: the number of $nkij$ indexes are between $n^4-n^3$ and $n^4$ both for $\mu^0$ and $\mu^1$. We then choose exactly a set $I^0_n$ of $n^4-n^3$ indexes $nkij$ for $\mu^0$, as well as a set $I^1_n$ of $n^4-n^3$ indexes $nkij$ for $\mu^1$.

We then define the following measures:
\begin{eqnarray}
&&\tilde \mu^0_n:=\sum_{nkij\in I^0_n} \tilde \mu^0_{nkij},\qquad \tilde \mu^1_n:=\sum_{nkij\in I^1_n} \tilde \mu^1_{nkij},\label{e-mut}\\
&&\bar\mu^0_n:=\mu^0-\tilde\mu^0_n=\sum_{nk}\bar\mu^0_{nk}+\sum_{nki}\bar\mu^0_{nki}+\sum_{nkij}\bar\mu^0_{nkij}+\sum_{nkij\not\in I^0_n}\tilde\mu^0_{nkij},\nonumber
\end{eqnarray}
and similarly $\tilde\mu^1_n$, $\bar\mu^1_n=\mu^1-\tilde\mu^1_n$. The idea of the notation is the following: the measure $\mu^0$ is decomposed into a ``relevant'' part $\tilde \mu^0_n$ and a negligible one $\bar \mu^0_n$. Such decomposition only depends on the parameter $n$ of the grid, as well as the choice of $n^4-n^3$ indexes defining the set $I^0_n$. The same holds for the decomposition of $\mu^1$.

We are ready to define the microscopic approximation $\hat \mu^0_n$. 
Observe that each $\tilde \mu^0_{nkij}$ is supported in the set $B^0_{nkij}$ defined in \eqref{e-B0} and has mass $\frac{1}{n^4}-\frac{1}{n^6}$. For each $nkij\in I^0_n$, choose a point $x^0_{nkij}$ belonging to the support of $\mu^0_{nkij}$. Then define 
\begin{equation*}
\hat \mu^0_n:=\sum_{nkij\in I^0_n} \left(\frac{1}{n^4}-\frac{1}{n^6}\right) \delta_{x^0_{nkij}}.
\end{equation*}
Repeat the same construction for $\mu^1$, then defining $\hat \mu^1_n$.

\textbf{Step 2.2:} We now prove that  sequences $\hat \mu^0_n, \hat \mu^1_n$ defined above, together with $\mu^0,\mu^1$, satisfy the hypotheses of Proposition \ref{prop cont T0}. Define the sequence 
$$f_n:=e^{LT}\sqrt{2}/n,$$
 where $L$ is the Lipschitz constant of the vector field $v$. Then observe that $x^0_{nkij}\in\supp(\mu^0)$ for all $nkij$, hence the fact that $\mu^0$ satisfies the Geometric Condition \ref{cond1} implies that $\hat \mu^0_n$ satisfies it too.

For each $n$, define the set $R^0_{n}$ as the support of $\bar \mu^0_n$. This implies that 
\begin{eqnarray}
r_n:=\mu^0(R^0_n)=\bar\mu^0_n(\mathbb{R}^d)=1-\tilde\mu^0_n(\mathbb{R}^d)=1-\left(n^4-n^3\right)\left(\frac{1}{n^4}-\frac{1}{n^6}\right)\to 0.\label{e-rn}
\end{eqnarray}
Moreover, by construction it holds $\hat \mu^0_n(\mathbb{R}^d)=\tilde \mu^0_n(\mathbb{R}^d)=1-r_n$. Finally, observe that each set $B^0_{nkij}$ has diameter strictly smaller than the diameter of $C^0_{nkij}$, hence smaller than\footnote{For the space $\mathbb{R}^d$, it is clearly sufficient to replace $\sqrt{2}$ with $\sqrt{d}$.} $\frac{\sqrt{2}}{n}<f_n$. Moreover, it is clear that it holds $(\mu^0)_{|_{(R^0_n)^c}}=\tilde\mu^0_n$ by construction. Then, one can estimate $W_\infty((\mu^0)_{|_{(R^0_n)^c}},\hat \mu^0_n)$ by decomposing them in each set $B^0_{nkij}$, \textit{i.e.}
\begin{eqnarray*}
&&W_\infty((\mu^0)_{|_{(R^0_n)^c}},\hat \mu^0_n)\leq \max_{nkij\in I^0_n} W_\infty(\tilde\mu^0_{nkij},\hat\mu^0_{nkij})\leq\\
 &&\max_{nkij\in I^0_n} \mathrm{diam}(\supp(\tilde\mu^0_{nkij})\cup\supp(\hat\mu^0_{nkij}))\leq \max_{nkij\in I^0_n} \mathrm{diam}(B^0_{nkij})<f_n.
\end{eqnarray*}
Repeat the same procedure for $\mu^1$, defining $R^1_n$ and proving the same estimates with $f_n,r_n$. Then, all conditions of \eqref{e-Rn} are satisfied. Summing up, all hypotheses of Proposition \ref{prop cont T0} are satisfied.

Since $v$ is uniformly bounded, there exists $R>0$ independent on $n$ and $\varepsilon$, such that 
\begin{equation}\label{def R}
\bigcup_{t\in[0,T]} \Phi_t^v(\omega)\subset\subset B_R(0).
\end{equation}
We now estimate the value of $S_{\varepsilon/4R}(\hat \mu^0_n,\hat \mu^1_n,\omega_n)$, where $\omega_n$ is defined in \eqref{omega n}. Observe that it clearly holds  
\begin{equation*}
S_{\varepsilon/8R}(\mu^0,\mu^1,\omega)\leqslant S(\mu^0,\mu^1,\omega).
\end{equation*}
Apply Proposition \ref{prop cont T0} for $\delta:=s/2$, that gives the following estimate:
\begin{equation*}S_{\varepsilon/4R}(\hat \mu^0_n,\hat \mu^1_n,\omega_n)+\dfrac{s}{2}
\leqslant S_{\varepsilon/8R}(\mu^0,\mu^1,\omega)+s 
\leqslant S(\mu^0,\mu^1,\omega)+s =T,
\end{equation*}
for $n$ large enough.

\textbf{Step 2.3:} We now prove existence of a control exactly steering $\hat \mu^0_n$ to $\hat \mu^1_n$ up to a mass $\varepsilon/4R$. 
Using Condition \eqref{eq:cond omega}, $\omega_n$ is connected  for $n$ large enough, 
Using Proposition \ref{prop:up to mass discret}, it holds
\begin{equation*}
T_{e,\varepsilon/4R}(\hat \mu^0_n,\hat \mu^1_n,\omega_n)=S_{\varepsilon/4R}(\hat \mu^0_n,\hat \mu^1_n,\omega_n)<T.
\end{equation*}
Then there exists $I^l_{n,\varepsilon}\subset I_n^0$ ($l=0,1$) 
such that 
\begin{equation}\label{Iln}
|I^l_{n,\varepsilon}|=(1-\varepsilon/4R)(n^4-n^3)
\end{equation}
 (assumed integer for simplicity), 
a bijection among indexes $\sigma_n:I^0_{n,\varepsilon}\rightarrow I^1_{n,\varepsilon}$ 
and a control $\mathds{1}_{\omega_n}\hat u_n$ satisfying the Carath\'eodory condition
such that
\begin{equation*}
\Phi_T^{v+\mathds{1}_{\omega}\hat u_n}(x_{nkij}^0)=x^1_{\sigma_n(nkij)}
\end{equation*}
for all $nkij\in I^0_{n,\varepsilon}$.
We define
\begin{equation*}
\tilde \mu^l_{n,\varepsilon}:=\sum_{nkij\in I^l_{n,\varepsilon}} \tilde \mu^l_{nkij},\hspace{5mm}
\bar\mu^l_{n,\varepsilon}:=\mu^l-\tilde\mu^l_{n,\varepsilon},\hspace{5mm}
\hat \mu^l_{n,\varepsilon}:=\sum_{nkij\in I^l_{n,\varepsilon}} \left(\frac{1}{n^4}-\frac{1}{n^6}\right) \delta_{x^l_{nkij}}.
\end{equation*}
 For simplicity of notation, for each $n$ we assume to re-arrange indexes $nkij$ both for $\hat\mu^0_{n,\varepsilon}$ and $\hat\mu^1_{n,\varepsilon}$, so that $I^0_{n,\varepsilon}=I^1_{n,\varepsilon}$ and the bijection $\sigma_n$ is the identity. We will then write from now on \begin{equation}\label{e-unhat}
\Phi_T^{v+\mathds{1}_{\omega}\hat u_n}(x_{nkij}^0)=x^1_{nkij}.
\end{equation}
We also denote by $\hat x_{nkij}(t)$ the corresponding trajectory, steering $x^0_{nkij}$ to $x^1_{nkij}$ in time $T$. Finally, we denote 
$$t^0_{nkij}=t^0(x^0_{nkij},\omega_n),\qquad t^1_{nkij}=t^1(x^1_{nkij},\omega_n).$$

We also make the following observation: the trajectory steering $x^0_{nkij}$ to $x^1_{nkij}$ is known to enter $\omega_n$ at time $t^0_{nkij}$ and to exit it at time $T-t^1_{nkij}$. 
Due to the construction of the control in the proof of Proposition \ref{prop: dim finie}, all trajectories $\hat  x_{nkij}(t)$ are  contained in $\omega_n$ for all times $t\in (t^0_{nkij},T-t^1_{nkij})$.

\textbf{Step 3:} We now prove approximate controllability from $\mu^0$ to $\mu^1$ in time $T=S(\mu^0,\mu^1,\omega) \allowbreak +s$. The idea is to write a control approximately steering $\mu^0$ to $\mu^1$, based on the controls $\hat u_n$ defined in \eqref{e-unhat} exactly steering $\hat\mu^0_{n,\varepsilon}$ to $\hat\mu^1_{n,\varepsilon}$ in time $T$.

Fix a given $n$. For each $nkij\in I^0_{n,\varepsilon}$ apply the following procedure:
\begin{enumerate}
\item the set $B^0_{nkij}$ evolves via the uncontrolled flow $\Phi^v_t$ in the time interval $[0,t^0_{nkij}]$; at this final time, the evolved set $B_{nkij}(t^0_{nkij}):=\Phi^v_{t^0_{nkij}}(B^0_{nkij})$ is completely contained in $\omega$.
\item we then apply a control $w_{nkij}$ concentrating the mass around $\hat  x_{nkij} (t)$ defined in \eqref{e-unhat}  for the time interval $[t^0_{nkij},T-t^1_{nkij}]$. Such mass is then moved to the corresponding evolved set $B_{nkij}(T-t^1_{nkij})$ around $\hat x (T-t^1_{nkij})$;
\item we finally let such set evolve via the uncontrolled flow $\Phi^v_t$ up to time $T$; the  set $B_{nkij}(T)$ is centered around $\hat  x(T)=x^1_{nkij}$. 
\end{enumerate}

In the following, we will prove that such strategy provides the desired result. In the first step, we will precisely define the strategy and prove estimates about the evolution of sets. In the second and final step, we will provide estimates about the Wasserstein distance, to prove approximate controllability.

\textbf{Step 3.1:} We now define a control approximately steering $\mu^0$ to $\mu^1$. We first define the control and the desired evolution for each index $nkij\in I^0_{n,\varepsilon}$. We recall that the control $\hat u_{n}$ given in \eqref{e-unhat} defines the trajectory $\hat  x_{nkij}(t)$ steering $x^0_{nkij}$ to $x^1_{nkij}$. We then define an adapted control $\tilde u_n(t,x)$ that concentrates mass around such trajectory. We choose
\begin{equation*}
\tilde u_{nkij}(t,x)=\hat u_n(t,\hat  x_{nkij}(t))+C_n(\hat  x_{nkij}(t)-x),%
\end{equation*}
where $C_n$ is a positive constant that will be chosen later. It is clearly a linear feedback stabilizing the system around the trajectory $\hat  x_{nkij}$. We also define a corresponding vector field 
\begin{equation}\label{e-wn}
w_{nkij}(t,x):=v(\hat x_{nkij}(t))+\tilde u_{nkij}(t,x).
\end{equation} Observe that in this formula the vector field $v$ is evaluated at $\hat  x_{nkij}(t)$ only, and not at the point $x$. We then define the evolution of the corresponding evolved cell $B_{nkij}(t)$ as follows:
\begin{equation}
B_{nkij}(t)=\label{e-B}\begin{cases}
\Phi^v_t(B^0_{nkij})&\mbox{~~if~} t\in[0,t^0_{nkij}],\\
\Phi^{w_{nkij}}_t(B_{nkij}(t^0_{nkij}))&\mbox{~~if~} t\in[t^0_{nkij},T-t^1_{nkij}],\\
\Phi^v_t(B_{nkij}(T-t^1_{nkij}))&\mbox{~~if~} t\in[T-t^1_{nkij},T].
\end{cases}
\end{equation}
We also give a simple estimation of the diameter of the cell $B_{nkij}(t)$ as follows:
\begin{equation}\label{e-diams}
\mbox{diam}(B_{nkij}(t))<\begin{cases}
e^{Lt} \frac{\sqrt2}{n}&\mbox{~~if~} t\in[0,t^0_{nkij}],\\
e^{-C_n(t-t^0_{nkij})}e^{Lt^0_{nkij}}\frac{\sqrt2}{n}&\mbox{~~if~} t\in[t^0_{nkij},T-t^1_{nkij}],\\
e^{-C_n(t^1_{nkij}-t^0_{nkij})}e^{LT}\frac{\sqrt2}{n}&\mbox{~~if~} t\in[T-t^1_{nkij},T],
\end{cases}
\end{equation}
where $L$ is the Lipschitz constant of the vector field $v$. The estimate in the first interval is a classical application of Gronwall lemma, recalling that $\mbox {diam}(B^0_{nkij})<\frac{\sqrt2}{n}$. For the estimate in the second interval, we use the fact that the term $v(\hat  x_{nkij}(t))+\hat u_n(t,\hat  x_{nkij}(t))$ in $w_{nkij}$ is constant with respect to the space variable, while for the term $C_n(\hat  x_{nkij}(t)-x)$ we again apply the Gronwall lemma. In the third interval, we again apply the Gronwall lemma and estimate $t\leq T$.

We now prove that $B_{nkij}(t)$ is contained in $\omega$ for all $t\in [t^0_{nkij},T-t^1_{nkij}]$. Indeed, it is sufficient to recall from Step 2.3 that one can choose the trajectory $\hat  x_{nkij}(t)$ to be contained in $\omega_n$ for all $t\in [t^0_{nkij},T-t^1_{nkij}]$. Now observe that for all $t\in [t^0_{nkij},T-t^1_{nkij}]$ it holds $\hat  x_{nkij}(t)\in B_{nkij}(t)\cap \omega_n$, that $\mbox{diam}(B_{nkij}(t))<e^{LT}\frac{\sqrt2}{n}$ and recall the definition \eqref{omega n} of $\omega_n$ with $f_n=e^{LT}\frac{\sqrt2}{n}$. Then, $B_{nkij}(t)$ is contained in $\omega$.

We now prove that, given a fixed $n$ and choosing a sufficiently large $C_n$, for each pair of distinct indexes $nkij,nk'i'j'\in I^0_{n,\varepsilon}$ the sets $B_{nkij}(t),B_{nk'i'j'}(t)$ are disjoint. Assume for simplicity that it holds $t^0_{nkij}\leq t^0_{nk'i'j'}$. First observe that at time $t=0$ the sets $B^0_{nkij},B^0_{nk'i'j'}$ are disjoint by construction. We then separate the time interval $[0,T]$ in five intervals:
\begin{enumerate}
\item Interval $[0, t^0_{nkij}]$: both sets are displaced via the flow defined by the Lipschitz vector field $v$, that keeps them disjoint.
\item Interval $[t^0_{nkij},t^0_{nk'i'j'}]$: observe that the set $B_{nk'i'j'}(t)$ evolves via the flow defined by the Lipschitz vector field $v$, hence its diameter can grow of a factor $e^{L(t^0_{nk'i'j'}-t^0_{nkij})}$. For $t\leq t^1_{nkij}$, by choosing a constant $$C_n>2e^{LT}>e^{L(t^0_{nk'i'j'}-t^0_{nkij})}$$ sufficiently large, one can reduce the diameter of the set $B_{nkij}(t)$ to have it disjoint with respect to $B_{nk'i'j'}$. For $t>t^1_{nkij}$, it is sufficient to observe that both sets are displaced by the Lipschitz vector field $v$, then apply Case 1.
\item Interval $[t^0_{nk'i'j},\min(t^1_{nkij},t^1_{nk'i'j'})]$, when non-empty: recall that the  trajectories $\hat x_{nkij}(t)$, $\hat x_{nk'i'j'}(t)$ are disjoint at each time, since the vector field $\widehat{u}_n$ defined in Step 2.3 satisfies the Carath\'eodory condition. Since in this time interval one can act on both sets with the controls $w_{nkij}$ and $w_{nk'i'j'}$, by choosing $C_n$ sufficiently large one can concentrate the sets in two sufficiently small neighborhoods around disjoint trajectories, then having disjoint sets.
\item Interval $[\min(t^1_{nkij},t^1_{nk'i'j'}),\max(t^1_{nkij},t^1_{nk'i'j'})]$ is equivalent to Case 2.
\item Interval $[\max(t^1_{nkij},t^1_{nk'i'j'}), T]$ is equivalent to Case 1.
\end{enumerate}

Then, given a fixed $n$, for each pair of distinct indexes $nkij$, $nk'i'j'$ one can choose a sufficiently large $C_n$ ensuring that the sets $B_{nkij}(t),B_{nk'i'j'}(t)$ are disjoint. By observing that the number of pairs are finite, one can choose a sufficiently large $C_n$ for which the property holds for all pairs of indexes.

We are now ready to define the control $u_n$ approximately steering $\mu^0$ to $\mu^1$ in time $T=S(\mu^0,\mu^1,\omega)+s$. For each time $t\in[0,T]$, first define
\begin{equation}\label{e-un}
u_n(t,x):=w_{nkij}(t,x)-v(x)\mbox{~~~ if~~}t\in[t^0_{nkij},T-t^1_{nkij}], x\in B_{nkij}(t),
\end{equation}
where $w_{nkij}$ is defined in \eqref{e-wn}. It is clear that such definition is well-posed, since the $B_{nkij}(t)$ are disjoint at each time. We now complete the definition of $u_n$ as follows:
\begin{equation}\label{e-un1}
u_n(t,x):=\begin{cases}
w_{nkij}(t,x)-v(x)&\mbox{for all~~}t\in[t^0_{nkij},T-t^1_{nkij}], x\in B_{nkij}(t),\\
\mbox{\begin{tabular}{l} Lip. spline\\ in the $x$ variable\end{tabular}}& \mbox{if~~}t\in [t^0_{nkij},T-t^1_{nkij}]\mbox{~for some~}nkij\in I^0_{n,\varepsilon},\\
0 &\mbox{for all~~} x\in \omega^c,\\
0 & \mbox{for all other~~~}t\in[0,T].
\end{cases}
\end{equation}
Observe that $t\in[t^0_{nkij},T-t^1_{nkij}]$ and $x\in B_{nkij}(t)$ imply that $B_{nkij}\in \omega$, \textit{i.e.} that the control is allowed to have non-zero value. 
Also observe that $u_n=0$ on the boundary of $\omega$, that implies that $\mathds{1}_{\omega}u_n$ is also Lipschitz for each time. Regularity with respect to time is ensured by the fact that discontinuities are allowed on a finite number of times $t^l_{nkij}$ only. Thus $\mathds{1}_{\omega}u_n$ satisfies the Carath\'eodory condition, hence it is an admissible control.

We finally observe the following key property: when applying the control $\mathds{1}_{\omega}u_n$ to the System \eqref{eq:transport}, the dynamics of the sets $B^0_{nkij}$ satisfies \eqref{e-B}. We will see in the next step that such property is the key to ensure approximate controllability of $\mu^0$ to $\mu^1$ at time $T$.

\textbf{Step 3.2:} We now prove that the control $u_n$ defined in \eqref{e-un}-\eqref{e-un1} provides approximate controllability of $\mu^0$ to $\mu^1$ at time $T$.  Recall that the solution $\mu_n(t)$ to System \eqref{eq:transport} with control $\mathds{1}_{\omega}u_n$ starting from $\mu^0$ is $$\mu_n(t):=\Phi^{v+\mathds{1}_{\omega}u_n}_t\#\mu^0.$$ We aim to prove that the distance $W_1(\mu_n(T),\mu^1)$ is less than $\varepsilon$ 
for $n$ large enough.

Recall the decomposition $\mu^0=\tilde\mu^0_{n,\varepsilon}+\bar\mu^0_{n,\varepsilon}$ introduced in \eqref{e-mut}, and similarly for $\mu^1$. One can then write 
$\mu_n(T)=\Phi^{v+\mathds{1}_{\omega}u_n}_T\#\tilde\mu^0_{n,\varepsilon}+\Phi^{v+\mathds{1}_{\omega}u_n}_T\#\bar\mu^0_{n,\varepsilon}$. We estimate
\begin{equation}\label{e-due1}
W_1(\mu_n(T),\mu^1)\leq W_1(\Phi^{v+\mathds{1}_{\omega}u_n}_T\#\tilde\mu^0_{n,\varepsilon},\tilde\mu^1_{n,\varepsilon})+W_1(\Phi^{v+\mathds{1}_{\omega}u_n}_T\#\bar\mu^0_{n,\varepsilon},\bar\mu^1_{n,\varepsilon}).
\end{equation}
For the first term, recall the decomposition $\tilde \mu^0_{n,\varepsilon}=\sum\limits_{nkij\in I^0_{n,\varepsilon}} \tilde \mu^0_{nkij}$ with the supports $\supp(\tilde \mu^0_{nkij})\allowbreak\subset B^0_{nkij}$ by construction. Then, it holds
\begin{equation}\label{e-due2}
\Phi^{v+\mathds{1}_{\omega}u_n}_T\#\tilde\mu^0_{n,\varepsilon}=\sum_{nkij\in I^0_{n,\varepsilon}} \Phi^{v+\mathds{1}_{\omega}u_n}_T\# \tilde \mu^0_{nkij}
\end{equation} and $\supp(\Phi^{v+\mathds{1}_{\omega}u_n}_T \# \tilde \mu^0_{nkij})\subset B_{nkij}(T)$. For each $nkij\in I^0_{n,\varepsilon}$, we provide the estimate 
\begin{eqnarray}
&&W_1(\Phi^{v+\mathds{1}_{\omega}u_n}_T \#\tilde \mu^0_{nkij},\tilde\mu^1_{nkij})\label{e-tri}
\leq W_1\left(\Phi^{v+\mathds{1}_{\omega}u_n}_T \#\tilde \mu^0_{nkij},\left(\frac{1}{n^4}-\frac{1}{n^6}\right)\delta_{x^1_{nkij}}\right)\\
&&\hspace*{5cm}+W_1\left(\left(\frac{1}{n^4}-\frac{1}{n^6}\right)\delta_{x^1_{nkij}},\tilde\mu^1_{nkij}\right)\nonumber
\\ \notag
&&\leq \left(\frac{1}{n^4}-\frac{1}{n^6}\right) \mbox{diam}(B_{nkij}(T))
+\left(\frac{1}{n^4}-\frac{1}{n^6}\right)\mbox{diam}(B^1_{nkij})
\\
&&< \left(\frac{1}{n^4}-\frac{1}{n^6}\right) (e^{LT}+1)\frac{\sqrt{2}}{n},\label{e-due3}
\end{eqnarray}
 with the following observation: the point $\hat x_{nkij}(T)=x^1_{nkij}$ belongs both to $B_{nkij}(T)$ by construction of such set in \eqref{e-B}, and to $B^1_{nkij}$ by construction of such cell in Step 1. Moreover, the mass of both $\tilde \mu^0_{nkij}$ and $\tilde \mu^1_{nkij}$ is $\frac{1}{n^4}-\frac{1}{n^6}$, then one can apply the triangular inequality \eqref{e-tri}, then estimate each term by observing that rays in a transference plan sending $\Phi^{v+\mathds{1}_{\omega}u_n}_T \#\tilde \mu^0_{nkij}$ to $\left(\frac{1}{n^4}-\frac{1}{n^6}\right)\delta_{x^1_{nkij}}$ have length smaller than $\mbox{diam}(B_{nkij}(T))$, and similarly for the second term. Estimates of the diameters are consequences of  \eqref{e-diams} and of the construction of $B^1_{nkij}$.

For the second term of \eqref{e-due1}, first recall that it holds
$$r_{n,\varepsilon}
:=\bar\mu^1_{n,\varepsilon}(\mb{R}^d)=1-(1-\varepsilon/4R)(n^4-n^3)(1/n^4-1/n^6)\underset{n\to \infty}{\longrightarrow} \varepsilon/4R$$ 
(see \eqref{e-rn} and \eqref{Iln}).
 Take now any point $x^0\in \supp(\mu^0)$ and study the trajectory given by the flow $\Phi^{v+\mathds{1}_{\omega}u_n}_t(x^0)$. Consider the set of times $t\in[0,T]$ for which $\Phi^{v+\mathds{1}_{\omega}u_n}_t(x^0)\in \omega$, that is nonempty due to the Geometric Condition \ref{cond1}. Take $\bar t$ the supremum of such times and observe that it holds $$\bar x:=\Phi^{v+\mathds{1}_{\omega}u_n}_{\bar t}(x^0)\in \bar\omega.$$ 
 Moreover, the control does not act on the time interval $[\bar t,T]$, \textit{i.e.} $\Phi^{v+\mathds{1}_{\omega}u_n}_t(x^0)= \Phi^{v}_t(x^0)$. 
As a consequence, using \eqref{def R}, for $n$ large enough, it holds
\begin{equation}
W_1(\Phi^{v+\mathds{1}_{\omega}u_n}_T\#\bar\mu^0_{n,\varepsilon},\bar\mu^1_{n,\varepsilon})\leq 2R \bar\mu^1_{n,\varepsilon}(\mb{R}^d)= 2Rr_{n,\varepsilon}.\label{e-due4}
\end{equation}

Summing up, from \eqref{e-due1}-\eqref{e-due2}-\eqref{e-due3}-\eqref{e-due4} it holds
\begin{equation*}
W_1(\mu_n(T),\mu^1)\leq (n^4-n^3)\left(\frac{1}{n^4}-\frac{1}{n^6}\right) (e^{LT}+1)\frac{\sqrt{2}}{n}+2Rr_{n,\varepsilon}\to \varepsilon/2.
\end{equation*}
For $n$ large enough, $W_1(\mu_n(T),\mu^1)$ is then smaller than $\varepsilon$.
Thus System \eqref{eq ODE} is approximatively controllable 
for each $T> S(\mu^0,\mu^1,\omega)$.

\bigskip

We now prove  \textbf{Item (ii)} of Theorem \ref{th opt}. To prove it, we first need to recall some results about evoluted sets and the Lebesgue measure of their boundaries, which proofs directly follow from the definitions and are then omitted.
\begin{lemma} \label{l-omegat} Let $\omega$ be an open set and $v$ a Lipschitz vector field. For $t> 0$, define the {\bf evoluted set}
\begin{equation*}
\omega^t:=\cup_{\tau\in (0,t)}\Phi^v_\tau(\omega).
\end{equation*}

Then, the following statements hold:
\begin{enumerate}
\item If $0< t_1<t_2$, then
\begin{equation*}
\omega^{t_1}\subset \omega^{t_2}.
\end{equation*}
\item For each $t>0$ it holds $\omega\subset\omega^t$ and $\Phi^v_t(\omega)\subset\omega^t$.
\item The set $\omega^t$ is open.
\item Let $\mu^0$ be a probability measure, with compact support, absolutely continuous with respect to the Lebesgue measure and satisfying the first part of the Geometric Condition \ref{cond1}. Let $x_0\in\supp(\mu^0)$ and $t_0(x_0)$ the corresponding infimum time to enter $\omega$ as defined in \eqref{e-temps}. Let $\mathds{1}_{\omega}u$ be a control satisfying the Carath\'eodory condition. Then, $t>t_0(x_0)$ implies 
\begin{equation}
\Phi^{v+\mathds{1}_{\omega}u}_t(x_0)\in \omega^{t-t_0(x_0)}.
\label{e-l-omegat}
\end{equation}
\end{enumerate}

\end{lemma}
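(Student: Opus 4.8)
The plan is to verify the four items in the order stated, treating (1)--(3) as immediate unfoldings of the definition of $\omega^t$ and concentrating on item (4), which is the only substantive one.

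For (1): $0<t_1<t_2$ gives $(0,t_1)\subset(0,t_2)$, so $\omega^{t_1}=\cup_{\tau\in(0,t_1)}\Phi^v_\tau(\omega)$ is a union over a smaller index set than $\omega^{t_2}$, hence $\omega^{t_1}\subset\omega^{t_2}$. For (2): fix $x\in\omega$; since $\omega$ is open and $\tau\mapsto\Phi^v_{-\tau}(x)$ is continuous with $\Phi^v_0(x)=x$, there is $\tau\in(0,t)$ with $\Phi^v_{-\tau}(x)\in\omega$, and then $x=\Phi^v_\tau(\Phi^v_{-\tau}(x))\in\Phi^v_\tau(\omega)\subset\omega^t$, so $\omega\subset\omega^t$; the inclusion $\Phi^v_t(\omega)\subset\omega^t$ follows the same way, writing $\Phi^v_t(x)=\Phi^v_{t-\epsilon}(\Phi^v_\epsilon(x))$ with $\epsilon>0$ small enough that $\Phi^v_\epsilon(x)\in\omega$. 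For (3): under the Carathéodory hypothesis each $\Phi^v_\tau$ is a homeomorphism of $\mathbb{R}^d$ (see \cite[Th. 2.1.1]{BP07}), so each $\Phi^v_\tau(\omega)$ is open and $\omega^t$, being a union of open sets, is open.

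The heart of the matter is (4). Write $\tau_0:=t_0(x_0)$ and $x(s):=\Phi^{v+\mathds{1}_\omega u}_s(x_0)$. First I would note that $x(\cdot)$ coincides with the uncontrolled flow on $[0,\tau_0]$: for $s<\tau_0$ one has $\Phi^v_s(x_0)\notin\omega$, so $\mathds{1}_\omega u$ vanishes along $s\mapsto\Phi^v_s(x_0)$, and uniqueness for the Carathéodory ODE forces $x(s)=\Phi^v_s(x_0)$ on $[0,\tau_0]$; in particular $x(\tau_0)=\Phi^v_{\tau_0}(x_0)\in\overline\omega$. Next, since $t>\tau_0$, the definition of $\tau_0$ as an infimum together with the openness of $\omega$ yields a time $\tau_1\in(\tau_0,t)$ with $\Phi^v_{\tau_1}(x_0)\in\omega$; hence $x(\cdot)$ cannot remain outside $\omega$ on all of $[0,t]$ (it would then equal $\Phi^v_\cdot(x_0)$ there). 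Therefore $\bar s:=\sup\{s\in[0,t]:x(s)\in\omega\}$ is well defined. It satisfies $\bar s\in(\tau_0,t]$: every $s$ with $x(s)\in\omega$ obeys $s\ge\tau_0$ by the first observation, and $\bar s=\tau_0$ is impossible, since it would force $x(\tau_0)\in\omega$ while $x(s)\notin\omega$ for all $s\in(\tau_0,t]$, contradicting continuity of $x$ and openness of $\omega$. On $(\bar s,t]$ the trajectory lies outside $\omega$, so the control is inactive and $x(t)=\Phi^v_{t-\bar s}(x(\bar s))$ with $x(\bar s)\in\overline\omega$ and $0\le t-\bar s<t-\tau_0$. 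Unfolding the definition, $\omega^{t-\tau_0}=\{z:\Phi^v_{-\rho}(z)\in\omega\text{ for some }\rho\in(0,t-\tau_0)\}$, so when $x(\bar s)\in\omega$ one takes $\rho=t-\bar s$ (or, if $\bar s=t$, uses $x(t)\in\omega\subset\omega^{t-\tau_0}$ from item (2)) and concludes.

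The point requiring care — and the one I expect to be the main obstacle — is the boundary case $x(\bar s)\in\partial\omega$, a "grazing" exit of the controlled trajectory from $\omega$. There $x(t)$ is only exhibited as a limit of points $\Phi^v_{t-s_k}(x(s_k))\in\omega^{t-\tau_0}$ with $s_k\uparrow\bar s$ and $x(s_k)\in\omega$, and openness of $\omega^{t-\tau_0}$ does not by itself put $x(t)$ inside it. A non-tangential exit ($v\cdot n\neq 0$ at $x(\bar s)$, with $n$ the outer normal) makes $\Phi^v_{-\eta}(x(\bar s))\in\omega$ for small $\eta>0$ and restores the conclusion; the residual tangential configurations are exactly the degenerate ones governed by the zero-Lebesgue-measure assumption on $\partial\omega^t$ in Theorem~\ref{th opt}(ii) (and quantified by Lemma~\ref{l-omegatreg}). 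Accordingly I would either record item (4) with $x(t)\in\overline{\omega^{t-\tau_0}}$ in the absence of any regularity of $\partial\omega$, or invoke that hypothesis to upgrade the membership to $x(t)\in\omega^{t-\tau_0}$.
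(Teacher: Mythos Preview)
Your arguments for items (1)--(3) are correct and are indeed the direct unfoldings the paper has in mind (the paper omits all proofs here). Your setup for item (4) is also the right one: coincidence with the uncontrolled flow on $[0,\tau_0]$, existence of a time in $(\tau_0,t)$ at which the controlled trajectory lies in $\omega$, and the definition of $\bar s=\sup\{s\in[0,t]:x(s)\in\omega\}$.

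The gap is in your treatment of the boundary case $x(\bar s)\in\partial\omega$. You conclude that this case forces a weakening to $x(t)\in\overline{\omega^{t-\tau_0}}$, or else requires the regularity hypothesis of Theorem~\ref{th opt}(ii). This is not so: the boundary case is resolved by the very uniqueness property you already invoked. Suppose that $\Phi^v_{-\epsilon}(x(\bar s))\notin\omega$ for all $\epsilon\in(0,\epsilon_0]$. Then the curve $w(s):=\Phi^v_{s-\bar s}(x(\bar s))$ lies in $\omega^c$ for $s\in[\bar s-\epsilon_0,\bar s]$ (recall $x(\bar s)\in\partial\omega\subset\omega^c$), so the control vanishes along it and $w$ solves the controlled Carath\'eodory ODE with terminal value $w(\bar s)=x(\bar s)$. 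By backward uniqueness, $w\equiv x$ on $[\bar s-\epsilon_0,\bar s]$, which contradicts the existence of $s_k\uparrow\bar s$ with $x(s_k)\in\omega$. Hence there exist $\epsilon_k\downarrow 0$ with $\Phi^v_{-\epsilon_k}(x(\bar s))\in\omega$, and therefore
\[
x(t)=\Phi^v_{t-\bar s}(x(\bar s))\in \Phi^v_{\,t-\bar s+\epsilon_k}(\omega)\subset\omega^{t-\tau_0}
\]
once $k$ is large enough that $t-\bar s+\epsilon_k\in(0,t-\tau_0)$ (recall $\bar s>\tau_0$). This also covers the sub-case $\bar s=t$ with $x(t)\in\partial\omega$. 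So item (4) holds exactly as stated, with no regularity of $\partial\omega$ needed and no passage to the closure.
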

\begin{remark}
The key interest of estimate \eqref{e-l-omegat} is that the flow depends on the choice of the control $\mathds{1}_{\omega}u$, but that the set $\omega^{t-t_0(x_0)}$ does not depend on it.
\end{remark}

We are now ready to prove  \textbf{Item (ii)} of Theorem \ref{th opt}. Consider $$T\in(S^*(\mu^0,\mu^1),S(\mu^0,\mu^1)].$$ By definition of $S(\mu^0,\mu^1)$, it exists $m\in [0,1]$ such that 
\begin{equation}
\label{e-TT}
T<\mc{F}^{-1}_{\mu^0}(m)+\mc{B}^{-1}_{\mu^1}(1-m).\end{equation}
 Define $\bar t:=\mc{F}^{-1}_{\mu^0}(m)$ and the set 
$$A:=\left\{ x\in\supp(\mu^0)\mbox{~~s.t.~~}t_0(x_0) > \bar t\right\}.$$
By definition of $\mc{F}^{-1}_{\mu^0}$, it holds $\mu^0(A)=1-m$.

Fix now any control $\mathds{1}_{\omega}u$. By definition of the quantity $S^*(\mu^0,\mu^1)$, for each $x_0\in \supp(\mu^0)$, it holds $t_0(x_0)<T$. One can then apply Lemma \ref{l-omegat}, statements 1 and 4, in the two following cases:
\begin{itemize}
\item if $x_0\in \supp(\mu^0)\setminus A$, it holds $\Phi^{v+\mathds{1}_{\omega}u}_T(x_0)\in \omega^{T-t_0(x_0)}\subset \omega^T$;
\item if $x_0\in A$, it holds $\Phi^{v+\mathds{1}_{\omega}u}_T(x_0)\in \omega^{T-t_0(x_0)}\subset \omega^{T-\bar t}$. This implies that 
\begin{equation}
\Phi^{v+\mathds{1}_{\omega}u}_T(A)\subset \omega^{T-\bar t}.
\label{e-inclu}
\end{equation}
\end{itemize}

Consider now the solution $\mu(t)$ to \eqref{eq:transport} associated to the control $\mathds{1}_{\omega}u$. We already recalled that $\mu(t)=\Phi^{v+\mathds{1}_{\omega}u}_t\#\mu^0$. By definition of the push-forward and applying \eqref{e-inclu}, we can compute
\begin{equation}
\mu(T)(\omega^{T-\bar t})=\mu^0\left(\left(\Phi^{v+\mathds{1}_{\omega}u}_T\right)^{-1}\left(\omega^{T-\bar t}\right)\right)\geq \mu^0(A)=1-m.
\label{e-1-m}
\end{equation}

We now aim to prove that $\mu^1(\omega^{T-\bar t})<1-m$. Recall that $\omega^{T-\bar t}$ is open, by Lemma \ref{l-omegat}, statement 3. Take now $x_1\in \omega^{T-\bar t}\cap\supp(\mu^1)$, and observe that it holds  $x_1\in\Phi^v_t(\omega)$ for some $t\in(0,T-\bar t)$. This implies that $\Phi^v_{-t}(x_1)\in\omega$, hence $t_1(x_1)\leq t\leq T-\bar t$. Since such property holds for any $x_1\in \omega^{T-\bar t}\cap\supp(\mu^1)$, this implies that $\mc{B}_{\mu^1}(T-\bar t)\geq \mu^1(\omega^{T-\bar t})$, by definition of $\mc{B}_{\mu^1}$. If $\mu^1(\omega^{T-\bar t})\geq 1-m$, then $\mc{B}_{\mu^1}^{-1}(1-m)\leq T-\bar t$, hence $T\geq \bar t+\mc{B}_{\mu^1}^{-1}(1-m)$. This contradicts \eqref{e-TT}, thus it holds $\mu^1(\omega^{T-\bar t})<1-m$.

Recall now that $\partial(\omega^{T-\bar t})$ has zero Lebesgue measure by the additional hypothesis in Item (ii) of Theorem \ref{th opt}. Also recall that $\mu^1$ is absolutely continuous with respect to the Lebesgue measure. Thus, $\mu^1(\overline{\omega^{T-\bar t}})<1-m$, hence by standard approximation of measurable sets, there exists an open set $\mathcal{O}\supset \overline{\omega^{T-\bar t}}$ such that $\mu^1(\mathcal{O})<1-m$. Observe that such set depends on $\mu^1$ only, thus it does not depend on the choice of the control $u$. Also observe that, by a compactness argument, we have that
$$D:=\inf\{\|x-y\|\mbox{~~s.t.~~}x\in \overline{\omega ^{T-\bar t}}, y\not\in \mathcal{O}\}$$ is strictly positive.

We now prove that for each control $\mathds{1}_{\omega}u$, the associated solution $\mu(t)$ to \eqref{eq:transport}  satisfies
\begin{equation}W_1(\mu(T),\mu^1)\geq D  (1-m-\mu^1(\mathcal{O})).\label{e-item2}\end{equation}
The idea is to observe that the $1-m$ mass of $\mu(T)$ in $\omega^{T-\bar t}$ cannot be completely transferred to the set $\mathcal{O}$, since $\mu^1(\mathcal{O})<1-m$. Thus, a part of it needs to be transferred to $\mathbb{R}^d\setminus \mathcal{O}$, for which the distance of transfer is larger than $D $. More formally, consider a transference plan $\pi\in\Pi(\mu(T),\mu^1)$: recall that $\pi(\mathbb{R}^d\times \mathcal{O})=\mu^1(\mathcal{O})<1-m$, thus $\pi(\omega^{T-\bar t}\times \mathcal{O})\leq \mu^1(\mathcal{O})<1-m$, while $\pi(\omega^{T-\bar t}\times \mathbb{R}^d)=\mu(T)(\omega^{T-\bar t})\geq 1-m$ by \eqref{e-1-m}. Thus it holds 
$$\pi \big( \omega^{T-\bar t}\times( \mathbb{R}^d\setminus \mathcal{O}) \big) \geq 1-m-\mu^1(\mathcal{O}).$$
 Observe that $(x,y)\in \omega^{T-\bar t}\times( \mathbb{R}^d\setminus \mathcal{O})$ implies $\|x-y\|\geq D $. As a consequence, it holds
$$\int_{\mathbb{R}^d\times \mathbb{R}^d} \|x-y\|\,d\pi(x,y)\geq \int_{\omega^{T-\bar t} \times( \mathbb{R}^d\setminus \mathcal{O})} D \,d\pi(x,y)=D  (1-m-\mu^1(\mathcal{O})).$$
Passing to the infimum among all transference plans $\pi\in\Pi(\mu(T),\mu^1)$, it holds \eqref{e-item2}. Observe that neither $D$ nor $1-m-\mu^1(\mathcal{O})$ depend on the choice of the control $\mathds{1}_{\omega}u$, thus there exists no sequence $\mathds{1}_{\omega}u_n$ such that the associated solution $\mu_n$ satisfies $W_1(\mu_n(T),\mu^1)\to 0$. Hence, the system is not approximately controllable at time $T$.
\end{proof}

\begin{remark}\label{rmq:T2* cont}
We give now two examples in which System \eqref{eq ODE} is never exactly controllable on 
 $[0,S^*(\mu^0,\mu^1))$  and another where System \eqref{eq ODE} is exactly controllable
 at each time $T\in[0,S^*(\mu^0,\mu^1))$.
 
 \noindent {\bf Figure \ref{fig:ex (0,T*) cont} (left).} Consider $\omega:=(-1,1)\times(-1.5,1.5)$.
The vector field $v$ is $(1,0)$, thus uncontrolled trajectories are right translations. Define %
\begin{equation*}
\left\{\begin{array}{l}
\mu^0:=\mathds{1}_{(-2.5,-2)\times(-1,1)}dx,\\
\mu^1:=\mathds{1}_{(2,2.5)\times(-1,1)}dx.
\end{array}\right.
\end{equation*}
The time $S^*(\mu^0,\mu^1)$ 
at which we can act on the particles and the minimal time $S(\mu^0,\mu^1)$ 
are respectively equal to $1.5$ and $2.5$.
We observe that for each time $T\in [0,S^*(\mu^0,\mu^1)]$
System \eqref{eq:transport} is not  approximately controllable. 
Indeed, each point takes a time $t>2$ to go from $\supp(\mu^0)$ to $\supp(\mu^1)$, hence one cannot expect approximate controllability for smaller times.

 \noindent {\bf Figure \ref{fig:ex (0,T*) cont} (right).} Consider $\omega:=(-1,1)\times(-1.5,1.5)$.
The vector field $v$ is $(-y,x)$, thus uncontrolled trajectories are rotations with constant angular velocity. 
Define
$$\mu^0=\mu^1:=\mathds{1}_{B_1(1,0)\backslash B_{0.5}(1,0)}dx. $$ 
In this case, both quantities $S^*(\mu^0,\mu^1)$ and $S(\mu^0,\mu^1)$ are  equal to $\pi$.
Since $\Phi^v_t\#\mu^0=\mu^1$ for all $t\geqslant 0$,
we remark that System \eqref{eq ODE} is exactly controllable for all $T\in [0,S^*(\mu^0,\mu^1))$.

\begin{figure}[ht]
\begin{center}
\hfill
\begin{tikzpicture}[scale=1]
\fill[pattern=dots,opacity = 0.5] (-2,-1.5) -- (0,-1.5) -- (0,1.5) -- (-2,1.5) -- cycle;
\draw (-2,-1.5) -- (0,-1.5) -- (0,1.5) -- (-2,1.5) -- cycle;
\fill[pattern=north east lines,opacity = 0.5] (-3,-1) -- (-3.5,-1) -- (-3.5,1) -- (-3,1) -- cycle;
\draw (-3,-1) -- (-3.5,-1) -- (-3.5,1) -- (-3,1) -- cycle;
\fill[pattern=north east lines,opacity = 0.5] (1,-1) -- (1.5,-1) -- (1.5,1) -- (1,1) -- cycle;
\draw (1,-1) -- (1.5,-1) -- (1.5,1) -- (1,1) -- cycle;
\path (-3.25,0) node {$\mu^0$};
\path (1.25,0) node {$\mu^1$};
\path (-1,-1) node {$\omega$};
\path (-1,1.95) node {$v$};
\draw (-1.5,1.75) -- (-0.5,1.75);
\draw (-0.5,1.75) -- (-0.6,1.85);
\draw (-0.5,1.75) -- (-0.6,1.65);
\draw (0.5,1.5) -- (1.5,1.5);
\draw (0.5,1.6) -- (0.5,1.4);
\draw (1.5,1.6) -- (1.5,1.4);
\path (1,1.75) node {\scriptsize 1};
\end{tikzpicture}
\hspace*{2cm}
\begin{tikzpicture}[scale=1]
\fill[pattern=dots,opacity = 0.5] (-2,-1.5) -- (0,-1.5) -- (0,1.5) -- (-2,1.5) -- cycle;
\draw(-2,-1.5) -- (0,-1.5) -- (0,1.5) -- (-2,1.5) -- cycle;
\draw  (0,-1) arc (90:450:-1) -- (0,-0.5) arc (90:450:-0.5) -- cycle;
\fill[pattern=north east lines,opacity = 0.5] (0,-1) arc (90:270:-1) -- (0,0.5) arc (-90:-270:-0.5) -- cycle;
\fill[pattern=north east lines,opacity = 0.5] (0,1) arc (90:270:1) -- (0,-0.5) arc (-90:-270:0.5) -- cycle;
\path (0.6,0.5) node {$\mu^1$};
\path (0.6,-0.5) node {$\mu^0$};
\path (-1,-1) node {$\omega$};
\path (1.15,-1.5) node {$v$};
\draw  (0.25,-1.25) arc (90:180:-1);
\draw (1.25,-0.25) -- (1.15,-0.35) ;
\draw (1.25,-0.25) -- (1.35,-0.35) ;
\end{tikzpicture}\hfill~
\caption{Left : The macroscopic system is not approximately controllable for each $T\in[0,S^*(\mu^0,\mu^1))$.
Right : The macroscopic system is approximately controllable for each $T\in[0,S^*(\mu^0,\mu^1))$.}\label{fig:ex (0,T*) cont}
\end{center}\end{figure}
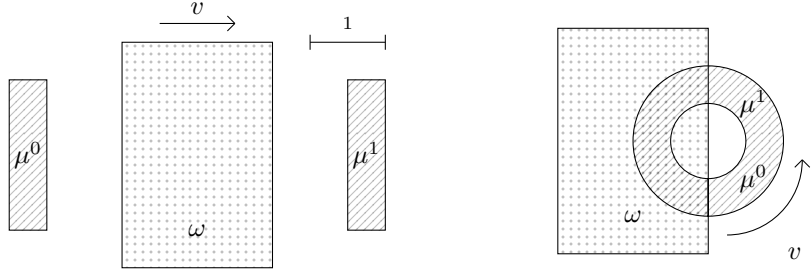

\end{remark}

\section{Numerical simulations}\label{sec:num sim}
In this section, we give some numerical examples of the algorithm 
developed in the proof of Theorem \ref{th opt} to compute the infimum time and the solution associated
to the minimal time problem to approximately steer an AC measure to another. We use a Lagrangian scheme, introduced in \cite{PR13,PT}, for simulations of transport equations.

We construct the mesh in Algorithm \ref{algo 0}. We then compute the minimal time in Algorithm \ref{algo 1}.

We first give Algorithm \ref{algo 0}. To simplify the notations, we assume that the space dimension is $d=2$.

\begin{center}\captionsetup{type=figure}
  \captionof{algorithm}{Construction of the meshes $\mathcal{T}_n^0$ 
  and $\mathcal{T}_n^1$}\label{algo 0}
  \begin{algorithmic}[0]
Let $n\in\mb{N}^*$ and two AC measures $\mu^0$, $\mu^1$ be given. Choose $\alpha_0,\alpha_1$ such that 
$\supp(\mu^0),~\supp(\mu^1)\subset(\alpha_0,\alpha_1)^2$.\\
\noindent\textbf{Step 1:} Construction, for all $k:=(k_1,k_2)\in\{0,...,n-1\}^2$, of the sets 
\begin{equation*}
\begin{array}{l}C_{n,k}:=
\left[\alpha_0+(\alpha_1-\alpha_0)\frac{k_1}{n},\alpha_0+(\alpha_1-\alpha_0)\frac{k_{1}+1}{n}\right)\\
\hspace*{2cm}\times\left[\alpha_0+(\alpha_1-\alpha_0)\frac{k_2}{n},\alpha_0+(\alpha_1-\alpha_0)\frac{k_{2}+1}{n}\right).
\end{array}
\end{equation*}\\
\textbf{Step 2:} 
Partition of $C_{n,k}$ into subsets $\{C_{nki}^0\}_{i}$ 
with $C_{nki}^0=[a_i^0,a_{i+1}^0)\times(\alpha_0,\alpha_1)$
such that  $\mu^0_{|C_{n,k}}(C_{nki}^0) =1/n^3$
(if $\mu^0_{|C_{n,k}}(C_{nki}^0) <1/n^3$, then we do not partition $C_{n,k}$)
and, for each $i$, partition of $C_{nki}^0$ into some subsets $\{C_{nkij}^0\}_{j}$ 
with $C_{nkij}^0=[a_i^0,a_{i+1}^0)\times[a_{ij}^0,a_{i(j+1)}^0)$
such that  $$\mu^0(C_{nkij}^0) =1/n^4.$$
 See Figure \ref{fig: mesh}. We define similarly the cells $C_{nkij}^1$.
\smallskip\\
\textbf{Step 3:} Construction of
$B^0_{nkij}:=[b_i^{0-},b_{i}^{0+})\times[b_{ij}^{0-},b_{ij}^{0+})\subset\subset C^0_{nkij}$ 
and  $B^1_{nkij}:=[b_i^{1-},b_{i}^{1+})\times[b_{ij}^{1-},b_{ij}^{1+})\subset\subset C^1_{nkij}$ 
such that 
\begin{equation*}
\mu^0(B^0_{nkij})=\mu^1(B^1_{nkij})=\frac{1}{n^4}-\frac{1}{n^6}.
\end{equation*}
See Figure \ref{fig:cell}.
\smallskip\\
\textbf{Step 4:} Definition of  $\mc{T}^0_n:=\bigcup_{kij\in I_n^0} B_{nkij}^0$ and  $\mc{T}^1_n:=\bigcup_{kij\in I_n^1} B_{nkij}^1$,
where, for $l=0,1$, $I_n^l$ is the set of $kij$ such that $B_{nkij}^l$ is well defined. 
  \end{algorithmic}
 \smallskip \hrule
\end{center}

\medskip

We now recall the algorithm to compute the minimal time. 
We will assume that $\omega$ is convex in order to use the constructive proof of
Proposition \ref{prop: dim finie}, \textit{i.e.}  the agents cross the control area following straight trajectories.

\begin{center}\captionsetup{type=figure}
  \captionof{algorithm}{Approximate controllability}\label{algo 1}
  \begin{algorithmic}[0]
Let $\mu^0$ and $\mu^1$ be two AC measures  satisfying the Geometric Condition \ref{cond1}.
\smallskip\\
\noindent\textbf{Step 1:} Construction of the meshes $\mc{T}^0_n:=\bigcup_{kij\in I_n^0} B_{nkij}^0$ and  $\mc{T}^1_n:=\bigcup_{kij\in I_n^1} B_{nkij}^1$
following Algorithm \ref{algo 0}.
\smallskip\\
\textbf{Step 2:}
Definition  of $X^l:=\{x^l_{kij}:kij\in I_n^l\}$ ($l=0,1$)  with $x^l_{nkij}$ being a point of $B_{kij}^l\cap\Supp(\mu^l)$
\smallskip\\
\textbf{Step 3:} Definition of
\begin{equation*}%
\omega_n:=\{x\in\mb{R}^d:d(x,\omega^c)>e^{LT}\sqrt{2}/n\}.
\end{equation*} 
 For $t_{nkij}^l:=t^l(x^l_{nkij},\omega_n)$ with $l=0,1$,
computation of the minimal time to steer $X^0$ to $X^1$ up to a mass $\varepsilon/4R$
\begin{equation*}
M_{e,\varepsilon/4R}(X^0,X^1,\gamma):=\max\limits_{1\leq i\leq n-R} \{t^0_{nkij}+t^1_{nkij+R}\},
\end{equation*}
where $R:=\lfloor n\varepsilon \gamma/4R \rfloor$
and the sequences $\{t_{nkij}^0\}_{kij}$, $\{t_{nkij}^1\}_{kij}$ are increasingly and decreasingly ordered, respectively\smallskip\\
\textbf{Step 4:} Computation of the optimal permutations $\sigma_0$ and $\sigma_1$ 
minimizing \eqref{eq:inf2} to steer $X^0$ to $X^1$ up to a mass $\varepsilon/4R$.\smallskip\\
\textbf{Step 5:} Concentration of the mass of $B_{nkij}^0$ around $x_{nkij}^0$  in order to obtain no intersection of the cells
when they follow the trajectories of $\delta_{x^0_{nkij}}$ \smallskip\\
\textbf{Step 6:} Computation of the control $u_n$ and the solution $\mu_n$ to \eqref{eq:transport} on $(0,T)$.
  \end{algorithmic}  \smallskip\hrule
\end{center}

\subsection{Example 1: the 1-D case}
Consider the initial data $\mu^0$ and the target $\mu^1$ defined by
\begin{equation*}
\left\{\begin{array}{l}
\mu^0:=0.5\times\mathds{1}_{(0,2)}dx,\\\noalign{\smallskip}
\mu^1:=0.5\times\mathds{1}_{(7,8)\cup(10,11)}dx.
\end{array}\right.
\end{equation*}
Let the velocity field $v:=1$ and the control region $\omega:=(5,6)$ be given.
This situation is illustrated in Figure \ref{fig:situation1D}.
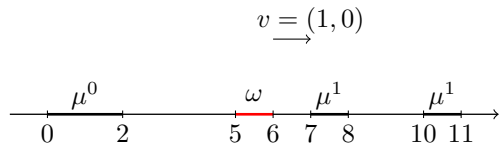
\begin{figure}[ht]
\begin{center}
\begin{tikzpicture}[scale=0.5]
\path (5.5,0.5) node {$\omega$};
\path (1,0.5) node {$\mu^0$};
\path (7.5,0.5) node {$\mu^1$};
\path (10.5,0.5) node {$\mu^1$};
\draw[->,line width = 0.5pt] (-1,0) -- (12,0);
\draw[line width = 1pt] (0,0) -- (2,0);
\draw[line width = 1pt] (7,0) -- (8,0);
\draw[line width = 1pt] (10,0) -- (11,0);
\draw[line width = 1pt,color=red] (5,0) -- (6,0);
\path (0,-0.5) node {$0$};
\path (2,-0.5) node {$2$};
\path (5,-0.5) node {$5$};
\path (6,-0.5) node {$6$};
\path (7,-0.5) node {$7$};
\path (8,-0.5) node {$8$};
\path (10,-0.5) node {$10$};
\path (11,-0.5) node {$11$};
\draw[-] (0,-0.1) -- (0,0.1);
\draw[-] (2,-0.1) -- (2,0.1);
\draw[-] (5,-0.1) -- (5,0.1);
\draw[-] (6,-0.1) -- (6,0.1);
\draw[-] (7,-0.1) -- (7,0.1);
\draw[-] (8,-0.1) -- (8,0.1);
\draw[-] (10,-0.1) -- (10,0.1);
\draw[-] (11,-0.1) -- (11,0.1);
\path (7,2.5) node {$v=(1,0)$};
\draw[->] (6,2) -- (7,2);
\end{tikzpicture}
\end{center}
\caption{Control set $\omega$, initial configuration $\mu^0$ and final one $\mu^1$ for Example 1.}
\label{fig:situation1D}
\end{figure}
In this case, the infimum time $T_a(\mu^0,\mu^1)$ is 8, which is computed in Step 3 of Algorithm \ref{algo 1}. One cannot achieve approximate control at such time, but we aim to control the system at time $T=T_a(\mu^0,\mu^1)+\delta$, with
$\delta:=0.1$, hence $T=8.1$.
Following  Algorithm \ref{algo 1}, we obtain the solution presented in Figure \ref{fig:simu1D}.
The maximal density for this solution is equal to $1.1$. It is due to the fact that we concentrate a part of the mass coming from the set $\{x<5\}$ in the control set, to slow it down. This increase of the maximal density can be seen as a drawback of the method for several key applications, namely for egress problems. Indeed, high concentrations need to be carefully avoided in such settings, since they might induce death by suffocation, that is among the main causes of fatalities in stampedes/crushes (see for instance \cite{helbing2002crowd}).

For this reason, in the future we plan to study new control strategies for minimal time problems, in which a constraint on the maximal density is added. Alternatively, we aim to estimate the maximal density value that is reached with optimal strategies.

\begin{figure}[ht]\begin{center}
\hspace*{-6mm}\includegraphics[scale=0.5]{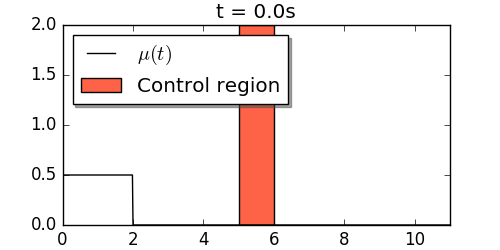}
\hspace*{-6mm}\includegraphics[scale=0.5]{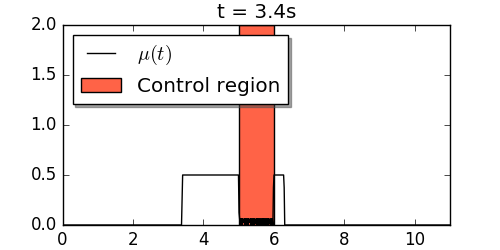}\\
\hspace*{-6mm}\includegraphics[scale=0.5]{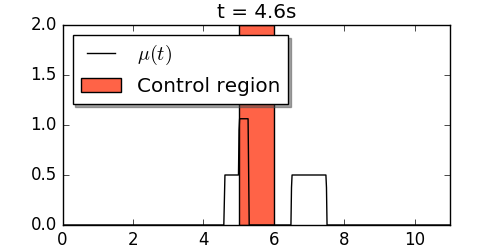}
\hspace*{-6mm}\includegraphics[scale=0.5]{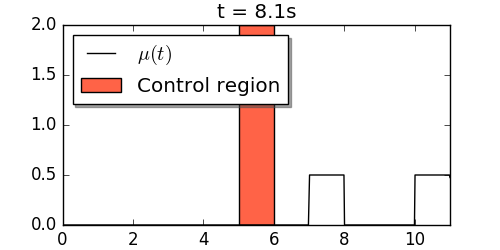}\end{center}
\caption{Example 1: solution at time $t=0$, $t=3.4$, $t=4.6$ and $t=T=8.1$.}
\label{fig:simu1D}\end{figure}

\subsection{Example 2: the 2D case}

We now give an example in the 2D case. Consider the initial data $\mu^0$ and the target $\mu^1$ defined by\begin{equation*}
\left\{\begin{array}{l}
\mu^0:=\frac{1}{8}\times\mathds{1}_{(0,4)\times (1,3)}dx,\\\noalign{\smallskip}
\mu^1:=\frac{1}{16}\times\mathds{1}_{(8,14)\times (0,4)\backslash (9,13)\times(1,3)}dx.
\end{array}\right.
\end{equation*}
We fix the velocity field $v:=(1,0)$ and the control region 
$\omega:=(5,7)\times (0,4)$.
This situation is illustrated in Figure \ref{fig:situation2D}. 
Again, in this case $T_a(\mu^0,\mu^1)=8$. Since it is not possible to approximatively steer $\mu^0$ to $\mu^1$ at such time, we control the system at time $T=T_a(\mu^0,\mu^1)+\delta$, with $\delta:=1$, hence $T=9$. 
Following Algorithm \ref{algo 1}, we present the solution in Figure \ref{fig:simu2D}.
As in the previous example, we observe a high concentration of the crowd in the control region, in this case
with a maximal density equal to $8$.

\begin{figure}
\begin{center}
\begin{tikzpicture}[scale=0.5]
\draw (0,1) -- (4,1) -- (4,3) -- (0,3) -- cycle;
\draw[pattern=north east lines,opacity=0.5] (0,1) -- (4,1) -- (4,3) -- (0,3) -- cycle;
\path (2,2) node {$\mu^0$};

\fill[pattern=dots,opacity=0.5] (5,0) -- (7,0) -- (7,4) -- (5,4) -- cycle;
\path (6,2) node {$\omega$};
\draw (8,0) -- (14,0) -- (14,4) -- (8,4) -- cycle;
\draw (9,1) -- (13,1) -- (13,3) -- (9,3) -- cycle;
\fill[pattern=north east lines,opacity=0.5] (8,0) -- (14,0) -- (14,1) -- (8,1) -- cycle;
\fill[pattern=north east lines,opacity=0.5] (8,3) -- (14,3) -- (14,4) -- (8,4) -- cycle;
\fill[pattern=north east lines,opacity=0.5] (8,1) -- (9,1) -- (9,3) -- (8,3) -- cycle;
\fill[pattern=north east lines,opacity=0.5] (13,1) -- (14,1) -- (14,3) -- (13,3) -- cycle;
\path (11,3.5) node {$\mu^1$};

\draw[->,line width = 1pt] (0,0) -- (15,0);
\draw[->] (0,0) -- (0,5);
\draw(5,0) -- (7,0) -- (7,4) -- (5,4) -- cycle;

\path (0,-0.5) node {$0$};
\path (4,-0.5) node {$4$};
\path (5,-0.5) node {$5$};
\path (7,-0.5) node {$7$};
\path (8,-0.5) node {$8$};
\path (14,-0.5) node {$14$};

\path (-0.5,0) node {$0$};
\path (-0.5,1) node {$1$};
\path (-0.5,3) node {$3$};
\path (-0.5,4) node {$4$};

\path (7,5.5) node {$v=(1,0)$};

\draw[->] (6.5,5) -- (7.5,5);

\end{tikzpicture}
\end{center}
\caption{Control set $\omega$, initial configuration $\mu^0$ and final one $\mu^1$ for Example 2.}
\label{fig:situation2D}
\end{figure}
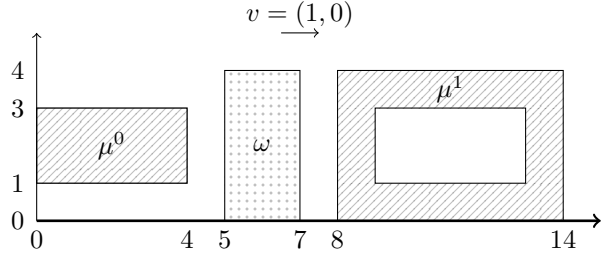

\begin{figure}[ht]\begin{center}
\hspace*{-6mm}\includegraphics[scale=0.45]{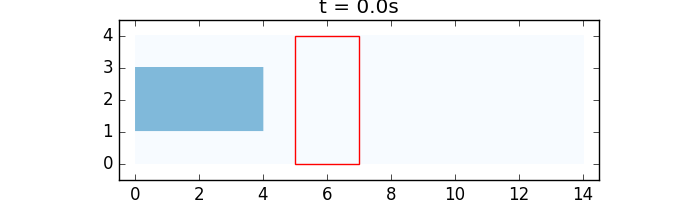}
\hspace*{-6mm}\includegraphics[scale=0.45]{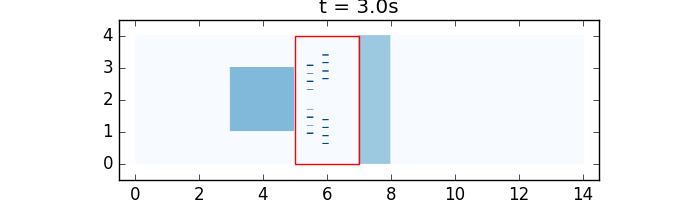}

\smallskip

\hspace*{-6mm}\includegraphics[scale=0.45]{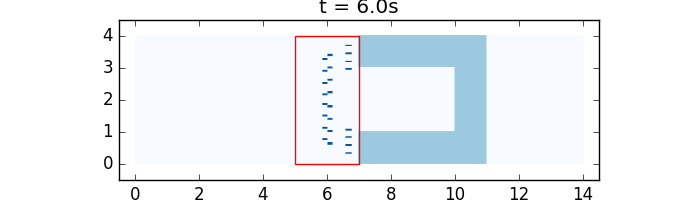}
\hspace*{-6mm}\includegraphics[scale=0.45]{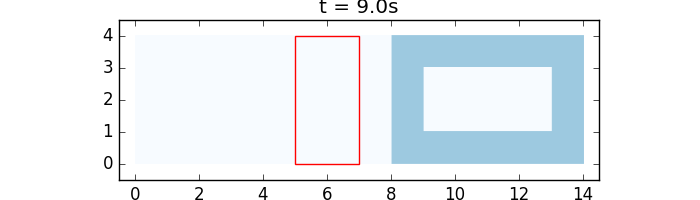}\end{center}
\caption{Example 2: solution at time $t=0$, $t=3$, $t=6$ and $t=T=9$.}
\label{fig:simu2D}\end{figure}

\appendix

\section{Proof of Lemma \ref{l-omegatreg}}  
  \setcounter{theorem}{0}
     \renewcommand{\thetheorem}{\Alph{section}.\arabic{theorem}}
     
In this section, we prove Lemma \ref{l-omegatreg}.  We first give the definition of the uniform interior cone condition and state a related result.

\begin{definition} \label{d-intcone} Let $$C(x_0,n,\alpha,r):=\left\{ \begin{array}{l}
x\in B_r(x_0)\mbox{~~s.t.~~}n\cdot (x-x_0) >0,\\
~~~\|x-x_0-n(n\cdot (x-x_0))\|<\alpha\, n\cdot (x-x_0)\end{array}\right\}$$ be a (open) cone centered in $x_0$, with normal unit vector $n$, tangent $\alpha$ and radius $r$.

We say that a set $A$ satisfies the uniform interior cone condition if there exist uniform $\alpha,r$ such that for all $x_0\in \partial A$ there exists $n$ such that $$C(x_0,n,\alpha,r)\subset A.$$
\end{definition}

\begin{lemma}
\label{l-misura}
Let $v$ be a $C^1$ vector field, and $A$ an open bounded set satisfying the uniform interior cone condition. Then, the following statements hold.
\begin{enumerate}
\item The set $\Phi^v_t(A)$ satisfies the uniform interior cone condition;
\item For each $t>0$, the set $A^t = \cup_{ \tau \in (0,t)}\Phi_{\tau}^v(A)$ satisfies the uniform interior cone condition.
\end{enumerate}

\end{lemma}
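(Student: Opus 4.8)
The plan is to reduce both statements to a single geometric fact about $C^1$ diffeomorphisms with controlled derivatives, and then to treat the sweeping in (2) by a short topological analysis of $\partial A^t$.

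\textbf{Step 1 (uniform control of the flow).} Since $v\in C^1$, over the finite interval $[0,t]$ the trajectories issued from a fixed neighbourhood $U$ of $\overline A$ remain in a compact set $K$, and each $\Phi^v_\tau$ ($\tau\in[0,t]$) is a $C^1$ diffeomorphism of $U$ onto its image. Writing $Y(\tau,x):=D_x\Phi^v_\tau(x)$, which solves the variational equation $\dot Y=Dv(\Phi^v_\tau(x))\,Y$, $Y(0,x)=\mathrm{Id}$, Gr\"onwall's inequality gives the two-sided bound $e^{-Lt}|w|\le|Y(\tau,x)w|\le e^{Lt}|w|$ for all $\tau\in[0,t]$, $x\in U$, $w\in\mathbb{R}^d$, where $L:=\sup_K\|Dv\|$; in particular the condition number of $Y(\tau,x)$ is at most $\kappa:=e^{2Lt}$, uniformly. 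Subtracting the variational equations at two points $x,x'$ and applying Gr\"onwall once more yields a uniform modulus of continuity $\|Y(\tau,x)-Y(\tau,x')\|\le\eta(|x-x'|)$ for all $\tau\in[0,t]$, with $\eta(s)\to0$ as $s\to0$ depending only on $L$, $t$ and the modulus of continuity of $Dv$ on $K$. The same bounds, applied to $\Phi^v_{-\tau}$, give a uniform modulus of continuity $\zeta$ for $D\Phi^v_\tau{}^{-1}$, depending only on the above data.

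\textbf{Step 2 (a geometric lemma, giving (1)).} I would prove: if $\Psi$ is a $C^1$ diffeomorphism on $U$ with $c|w|\le|D\Psi(x)w|\le C|w|$ on $U$ and $D\Psi^{-1}$ has modulus of continuity $\zeta$, then $\Psi(A)$ satisfies the uniform interior cone condition with parameters $(\alpha',r')$ depending only on $\alpha,r,c,C,\zeta$. Fix $y_0=\Psi(x_0)\in\partial\Psi(A)$, so $x_0\in\partial A$; choose a cone $C(x_0,n,\alpha,r)\subset A$, put $L_0:=D\Psi(x_0)$ and $\nu:=L_0n/|L_0n|$. For a unit $w$ with $|w-\nu|\le\delta$ and $s>0$ small, Taylor expansion of $\Psi^{-1}$ gives $\Psi^{-1}(y_0+sw)=x_0+sL_0^{-1}w+R(s)$ with $|R(s)|\le s\,\zeta(s)$. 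Since $L_0^{-1}\nu=n/|L_0n|$ lies on the cone axis with $|L_0^{-1}\nu|\ge1/C$, and $|L_0^{-1}w-L_0^{-1}\nu|\le\delta/c$, choosing $\delta$ of order $\alpha c/C$ places $sL_0^{-1}w$ inside $C(x_0,n,\alpha/2,r)$ with transverse slack $\gtrsim\alpha s$; then taking $s\le s_0$ with $\zeta(s_0)$ small enough (and $s_0\lesssim rc$) absorbs $R(s)$ and keeps the point in $C(x_0,n,\alpha,r)\subset A$. Hence $C(y_0,\nu,\delta,s_0)\subset\Psi(A)$, with $\delta,s_0$ uniform in $y_0$. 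Applying this with $\Psi=\Phi^v_t$ and the bounds of Step 1 proves (1); moreover, since all constants in Step 1 are uniform over $\tau\in[0,t]$, there exist $\alpha',r'$ such that $\Phi^v_\tau(A)$ satisfies the uniform interior cone condition with parameters $(\alpha',r')$ for \emph{every} $\tau\in[0,t]$.

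\textbf{Step 3 (the swept set, giving (2)).} First $A\subset A^t$ (for $x\in A$ open, $\Phi^v_{-\tau}(x)\in A$ for small $\tau>0$, so $x\in\Phi^v_\tau(A)\subset A^t$), and $A^t$ is open as a union of open sets. Next I claim every $y_0\in\partial A^t=\overline{A^t}\setminus A^t$ has the form $\Phi^v_{\tau_0}(x_0)$ with $x_0\in\partial A$ and $\tau_0\in[0,t]$: indeed $\overline{A^t}\subset\bigcup_{\tau\in[0,t]}\Phi^v_\tau(\overline A)$, the continuous image of the compact $[0,t]\times\overline A$, so $y_0=\Phi^v_{\tau_0}(x_0)$ for some $\tau_0\in[0,t]$, $x_0\in\overline A$; if $x_0\in A$, then in each of the cases $\tau_0\in(0,t)$, $\tau_0=0$, $\tau_0=t$ one exhibits an open neighbourhood of $y_0$ inside $A^t$ (using $\Phi^v_\sigma(A)\subset A^t$ for $\sigma\in(0,t)$ and $A\subset A^t$), contradicting $y_0\notin A^t$; hence $x_0\in\partial A$. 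Since $\Phi^v_{\tau_0}$ is a homeomorphism, $y_0\in\Phi^v_{\tau_0}(\partial A)=\partial\Phi^v_{\tau_0}(A)$, and by the uniform version of Step 2 there is a cone $C(y_0,\nu,\alpha',r')\subset\Phi^v_{\tau_0}(A)\subset A^t$ with $(\alpha',r')$ independent of $y_0$. This is exactly the uniform interior cone condition for $A^t$.

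\textbf{Main obstacle.} The delicate part is Step 2: one must quantify how a linear map of bounded condition number contracts the opening of a cone and how the first-order error of $\Psi^{-1}$ (governed by the modulus of continuity $\zeta$) consumes the remaining opening, so that the output aperture $\delta$ and radius $s_0$ stay strictly positive and uniform over all boundary points \emph{and} over $\tau\in[0,t]$. The bookkeeping in Step 3 requires some care at the endpoints $\tau_0\in\{0,t\}$ but is otherwise routine.
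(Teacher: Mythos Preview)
Your proposal is correct and matches the approach the paper hints at: the paper omits the proof entirely, stating only that it ``can be easily recovered by a first-order expansion of the flow, in which parameters depend continuously on the point, since $v$ is chosen to be $C^1$.'' Your Step~2 is precisely this first-order expansion, and your Steps~1 and~3 supply the uniform Gr\"onwall estimates and the boundary analysis of $A^t$ (in particular the endpoint cases $\tau_0\in\{0,t\}$, handled via $A\subset A^t$ and $\Phi^v_t(A)\subset A^t$) that turn the hint into a complete argument.
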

The proof is omitted. It can be easily recovered by a first-order expansion of the flow, in which parameters depend continuously on the point, since $v$ is chosen to be $C^1$.

We are now ready to prove Lemma \ref{l-omegatreg}. 
Since $\omega$ satisfies the uniform interior cone condition, by Lemma \ref{l-misura}, Statement 2, it holds that $\omega^t$ satisfies the uniform interior cone condition, with uniform parameters $\alpha',r'$. We now prove that this implies that $\partial(\omega^t)$ has zero Lebesgue measure. Since $\omega^t$ is open and bounded, then it is measurable, thus its characteristic function $\mathds{1}_{\omega^t}$ is in $L^1(\mathbb{R}^d)$. It is now sufficient to prove that no point of the boundary is a Lebesgue point with respect to the Lebesgue measure. Indeed, observe from one side that for each $x\in \partial(\omega^t)$ it holds $\mathds{1}_{\omega^t}(x)=0$ since $\omega^t$ is open. On the other side, for $r<r'$ it holds $$\frac{1}{\mathcal{L}(B_r(x))}\int_{B_r(x)}\mathds{1}_{\omega^t}(y)d\mathcal{L}(y)\geq \frac{\mathcal{L}(C(x,m,\alpha',r')\cap B_r(x))}{\mathcal{L}(B_r(x))}.$$ By a simple dilation and rototranslation argument, we remark that such term coincides with $\frac{\mathcal{L}(C(0,m,\alpha',1))}{\mathcal{L}(B_1(0))}$, that is a strictly positive constant, independent on $r$. As a consequence, it holds $$0=\mathds{1}_{\omega^t}(x)\neq \lim_{r\to 0}\frac{1}{\mathcal{L}(B_r(x))}\int_{B_r(x)}\mathds{1}_{\omega^t}(y)d\mathcal{L}(y),$$ hence $x$ is not a Lebesgue point. Since $x\in \partial(\omega^t)$ is generic, then no point in $\partial(\omega^t)$ is a Lebesgue point. Since the set of points that are not Lebesgue points for a measurable function has zero Lebesgue measure (Lebesgue-Besicovitch differentiation theorem, see \textit{e.g.} \cite[Sec. 1.7]{evgar}), then $\partial(\omega^t)$ has zero Lebesgue measure.


\end{document}